\tikzset{join/.code=\tikzset{after node path={%
\ifx\tikzchainprevious\pgfutil@empty\else(\tikzchainprevious)%
edge[every join]#1(\tikzchaincurrent)\fi}}}
\tikzset{>=stealth',every on chain/.append style={join},
         every join/.style={->}}
\tikzstyle{labeled}=[execute at begin node=$\scriptstyle,
\newcommand{\showcomments}{yes}
\newsavebox{\commentbox}
\theoremstyle{plain}
\newtheorem{theorem}{Theorem}[section]
\newtheorem{corollary}[theorem]{Corollary}
\newtheorem{lemma}[theorem]{Lemma}
\newtheorem{proposition}[theorem]{Proposition}
\theoremstyle{definition}
\newtheorem{defn}[theorem]{Definition}
\newtheorem{example}[theorem]{Example}
\newcommand{\bbp}{\mathbb{P}}
\newcommand{\bndry}{\partial}
\newcommand{\R}{\mathbb{R}}
\newcommand{\relbndry}{\bndry (G,\bbp)}
\newcommand{\boundary}{\partial}
\DeclareMathOperator{\stab}{Stab}
\DeclareMathOperator{\Stab}{Stab}
\DeclareMathOperator{\val}{val}
\DeclareMathOperator{\QI}{QI}
\newcommand{\set}[2]{\{\,{#1} \mid {#2} \,\}}
\newcommand{\bigset}[2]{ \bigl\{ \, {#1} \bigm| {#2} \, \bigr\} }
\newcommand{\abs}[1]{\left|{#1}\right|}
\renewcommand{\emptyset}{\varnothing}
\renewcommand{\setminus}{-}
\newcommand{\field}[1]{\mathbb{#1}}
\newcommand{\Z}{\field{Z}}
\newcommand{\Hyp}{\field{H}}
\newcommand{\id}{\mathbbm{1}}
\renewcommand{\P}{\field{P}}
\newcommand{\of}{\circ}
\newcommand{\Drutu}{Dru{\cb{t}}u}
\begin{document}
\title[On canonical splittings of relatively hyperbolic groups]{On canonical splittings of relatively hyperbolic groups}

\author{Matthew Haulmark}
\address{Department of Mathematical Sciences\\
Binghamton University\\
P.O. Box 6000\\
Binghamton, NY
13902\\ 
USA}
\email{haulmark@binghamton.edu}

\author{G. Christopher Hruska}
\address{Department of Mathematical Sciences\\
University of Wisconsin--Milwaukee\\
P.O. Box 413\\
Milwaukee, WI 53201\\
USA}
\email{chruska@uwm.edu}

\date{\today}

\begin{abstract}
A JSJ decomposition of a group is a splitting that allows one to classify all possible splittings of the group over a certain family of edge groups.
Although JSJ decompositions are not unique in general, Guirardel--Levitt have constructed a canonical JSJ decomposition, the tree of cylinders, which classifies splittings of relatively hyperbolic groups over elementary subgroups.

In this paper, we give a new topological construction of the Guirardel--Levitt tree of cylinders, and we show that this tree depends only on the homeomorphism type of the Bowditch boundary. Furthermore the tree of cylinders admits a natural action by the group of homeomorphisms of the boundary.
In particular, the quasi-isometry group of $(G,\mathbb{P})$ acts naturally on the tree of cylinders.
\end{abstract}
\maketitle

\section{Introduction}

A word hyperbolic group $G$ has a natural Gromov boundary at infinity $\boundary G$, whose topological properties are often closely related to properties of $G$.
%For instance, $G$ is one-ended if and only if $\boundary G$ is connected.
A particularly strong theorem of this nature states that $\boundary G$ is homeomorphic to a circle if and only if $G$ is a cocompact Fuchsian group \cite{T88,Gabai92,CJ94}.
Bowditch used this result to prove that each hyperbolic group with connected boundary has a canonical JSJ tree for splittings over $2$--ended subgroups \cite{Bow98_JSJ} that depends only on the topology of $\boundary G$. 
Bowditch's topological JSJ theorem is a key ingredient in Ha\"{i}ssinsky's proof of the following: if $G$ is a word hyperbolic group with boundary homeomorphic to the limit set of a convex cocompact Kleinian group that contains no embedded Sierpi\'{n}ski carpet, then $G$ is virtually a convex cocompact Keinian group \cite{Haissinsky17}.

According to Gromov \cite{Gro87}, quasi-isometric hyperbolic groups always have homeomorphic boundaries.  However ``homeomorphic boundary'' equivalence is broader in general than quasi-isometric equivalence (see, for instance, \cite{Bourdon97,CashenMartin17}).  Understanding the difference between these two notions of equivalence is a central problem in geometric group theory.
For example, the Cannon Conjecture is equivalent to the statement that every group with boundary homeomorphic to the $2$--sphere is quasi-isometric to $\Hyp^3$.
Malone, Cashen--Martin, and Dani--Thomas have used the topological invariance of Bowditch's JSJ decomposition to understand the relation between quasi-isometric equivalence and homeomorphic boundary equivalence for certain families of hyperbolic groups \cite{Malone_Thesis,CashenMartin17,DaniThomas_JSJ}.

It is natural to ask whether Bowditch's canonical JSJ decomposition can be generalized from hyperbolic groups to relatively hyperbolic groups.
Several partial generalizations along these lines are known.
For finitely presented groups (with no hyperbolicity requirement) Papasoglu shows that the JSJ decomposition for splittings over $2$--ended groups is invariant under quasi-isometries \cite{Papasoglu_Quasi-Lines}. However, the proof in \cite{Papasoglu_Quasi-Lines} depends on a theorem about separating quasilines that need not hold for non--finitely presented groups (see, for example, \cite{Papasoglu12_Lamplighter}). Thus Papasoglu's splitting theorem does not apply to arbitrary relatively hyperbolic groups.
Even in the finitely presented case, it is not clear from Papasoglu's methods whether this decomposition is determined by the topology of the boundary.

Alternatively Guirardel--Levitt have introduced a canonical JSJ decomposition, called the tree of cylinders, for splittings of a relatively hyperbolic group $G$ over elementary subgroups relative to peripheral subgroups \cite{GL_TreesCyl}, which is invariant under automorphisms of $G$.
In the special case in which $G$ is a free group and $\mathbb{P}$ is a family of maximal cyclic subgroups, 
Cashen \cite{Cashen16_JSJ} (building on work of Otal \cite{Otal92})
shows that the JSJ tree of cylinders is determined by the topology of the boundary.
However Cashen's proof appears to be quite specialized to the case of free groups.

Groff claims in \cite{Grof13} that the JSJ tree of cylinders can be constructed from the topology of the boundary $\boundary(G,\mathbb{P})$ using a construction suggested by Papasoglu--Swenson in \cite{PS06}.
Unfortunately the result claimed in \cite[Thm.~4.6]{Grof13} is false; a counterexample to this claim may be found in Section~\ref{sec:Levitt}.  The existence of this counterexample indicates that the suggested strategy does not produce a canonical $G$--tree in general.

Bowditch and Guralnik \cite{Bow98_JSJ,Gur} have indicated that exact cut pairs are more amenable to detailed analysis than arbitrary cut pairs.
See Section~\ref{subsec:CutPoints} for the definition of exactness.
For a more detailed investigation of non-exact cut pairs, we refer the reader to Section~\ref{sec:Levitt} and also to \cite{HruskaWalsh_Parabolic}.
The canonical JSJ tree introduced in the following theorem may be described topologically as the simplicial tree dual to the family of all cut points and inseparable exact cut pairs and is constructed in Theorem~\ref{Thm: construction of T}.
%Its functorial properties are discussed in Corollary~\ref{cor:Functorial}.

\begin{theorem}
\label{thm: main thm}
Let $(G,\mathbb{P})$ be a relatively hyperbolic group.  Suppose the Bowditch boundary $M=\boundary(G,\mathbb{P})$ is connected.
The canonical JSJ tree of cylinders for splittings of $G$ over elementary subgroups relative to peripheral subgroups is a tree $T(M)$ that depends only on the topological structure of the boundary $M$.
\end{theorem}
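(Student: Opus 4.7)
The plan is to combine the topological construction of Theorem~\ref{Thm: construction of T} with an identification of the resulting tree as the Guirardel--Levitt JSJ tree of cylinders. Concretely, I would first treat $T(M)$ as the simplicial tree dual to the family of cut points and inseparable exact cut pairs of $M$, then show this abstract topological invariant has the right algebraic content.

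First, I would invoke Theorem~\ref{Thm: construction of T} to obtain $T(M)$. Since the notions ``cut point,'' ``cut pair,'' ``exact,'' and ``inseparable'' are each purely topological, the construction depends only on the homeomorphism type of $M$. In particular, $\Homeo(M)$ acts on $T(M)$; because $G$ acts on $M = \boundary(G,\mathbb{P})$ as a geometrically finite convergence group, this gives the desired $G$--action on $T(M)$.

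Second, I would compute the stabilizers. Parabolic fixed points in $M$ are cut points whose stabilizers are peripheral subgroups from $\mathbb{P}$; pairs of fixed points of loxodromic elements associated with maximal two--ended subgroups that form inseparable exact cut pairs give rise to two--ended edge stabilizers. In either case, edge stabilizers are elementary subgroups of $(G,\mathbb{P})$, and since each peripheral subgroup fixes its parabolic point, the splitting encoded by $T(M)$ is relative to $\mathbb{P}$.

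Third, the crucial step is to prove that $T(M)$ is $G$--equivariantly isomorphic to the Guirardel--Levitt tree of cylinders $T_{GL}$ from \cite{GL_TreesCyl}. One direction uses the universal property of $T_{GL}$ among JSJ splittings: any elementary subgroup arising as an edge group of a splitting of $(G,\mathbb{P})$ has a fixed set in $M$ consisting of a single parabolic cut point or a loxodromic pair; one then verifies that such loxodromic pairs are exactly the inseparable exact cut pairs, and that the equivalence relation defining Guirardel--Levitt's cylinders matches the merging of vertices of $T(M)$ coming from coincident fixed sets. The other direction shows that each cut point and inseparable exact cut pair produces a splitting over its elementary stabilizer that appears in (or is refined by) the JSJ.

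The main obstacle is this last identification, and in particular matching the Guirardel--Levitt notion of a cylinder with the topological condition of inseparability among exact cut pairs. The counterexample in Section~\ref{sec:Levitt} shows that Groff's earlier attempt in \cite{Grof13} fails because non--exact cut pairs (and exact cut pairs that are separable by other exact cut pairs) need not correspond to elementary splittings; so the heart of the work is proving that imposing exactness and inseparability cuts the Papasoglu--Swenson style construction down to exactly the right subfamily. I would expect to carry this out by analyzing how a non--exact or non--inseparable cut pair sits inside a larger collection of coincident cut pairs, using Bowditch's dynamical description of elementary subgroups together with the cylinder equivalence of \cite{GL_TreesCyl} and the technical analysis of non--exact cut pairs in Section~\ref{sec:Levitt} and in \cite{HruskaWalsh_Parabolic}, to match vertices and edges of $T(M)$ with vertices and edges of $T_{GL}$ one orbit at a time.
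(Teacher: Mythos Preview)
Your overall plan---build $T(M)$ via Theorem~\ref{Thm: construction of T} as a purely topological object, then identify it with the Guirardel--Levitt tree of cylinders---is the right shape, and your first two steps are fine. But the ``crucial step'' you flag is where all the content lies, and your sketch does not point to the mechanism that actually carries it.

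The paper does not try to match $T(M)$ with a pre-existing $T_{GL}$ orbit by orbit or via cylinder bookkeeping. Instead it verifies directly that $T(M)$ is a JSJ tree (Proposition~\ref{prop: cmbd tree is JSJ}) by checking the two defining axioms---universal ellipticity and domination---and then shows by a short bipartite computation that $T(M)$ equals its own tree of cylinders, hence coincides with $T_{GL}$. The engine behind both JSJ axioms is the halfspace machinery of Section~\ref{sec:Decompositions}: for any minimal $(\mathbb{E}_1,\mathbb{P})$--tree $S$, each edge of $S$ cuts $\boundary(G,\mathbb{P})$ into two nonempty open halfspaces (Proposition~\ref{Prop: separation}), so the limit set of every nonparabolic $2$--ended edge group is an exact cut pair (Corollary~\ref{cor:SplittingExact}); and any subset of the boundary not separated by any cut point or exact cut pair must lie inside the limit set of a single vertex group of $S$ (Proposition~\ref{prop:PieceInVertexSpace}). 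Applying the latter to an inseparable exact cut pair shows its $2$--ended stabilizer is elliptic in every $S$ (universal ellipticity); applying it to a rigid piece shows its stabilizer is elliptic in every $S$ (domination for rigid vertices). Quadratically hanging vertex groups are handled by \cite[Thm.~5.27]{GL_JSJ}.

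Your sketch also contains a misstep: you propose to ``verify that such loxodromic pairs are exactly the inseparable exact cut pairs.'' This is false for a general $(\mathbb{E},\mathbb{P})$--tree; the limit set of a $2$--ended edge group is always exact (Corollary~\ref{cor:SplittingExact}) but typically not inseparable---consider splittings of a QH vertex group along interior simple closed curves. The correct statement is that only the inseparable exact cut pairs give \emph{universally} elliptic edge groups, and isolating this is precisely what the halfspace argument via Proposition~\ref{prop:PieceInVertexSpace} accomplishes. Without that tool you have no way to show, for instance, that the stabilizer of a rigid piece of $M$ is elliptic in an arbitrary elementary splitting, which is the heart of the domination axiom.
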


By a theorem of Bowditch, the boundary $\boundary(G,\mathbb{P})$ is connected precisely when each subgroup $P\in \mathbb{P}$ is infinite and $G$ is one ended relative to $\mathbb{P}$.
In this article, we only consider relatively hyperbolic pairs $(G,\mathbb{P})$ such that $G$ is finitely generated.
In that context, if the boundary $\boundary(G,\mathbb{P})$ is connected then it is locally connected by work of Bowditch in a special case and Dasgupta in the general case \cite{Bow01_Peripheral,Dasgupta_Thesis}.
(For the original hyperbolic case, see \cite{Swa}.)
We note that local connectivity is used extensively throughout the proof of Theorem~\ref{thm: main thm}, as is also the case for the main results of \cite{Bow98_JSJ}.

%In the relatively hyperbolic setting, the boundary $M = \boundary(G,\mathbb{P})$ is always compact and metrizable.  It is connected if and only if $G$ is one ended relative to $\mathbb{P}$.
%When $M$ is connected, Bowditch has shown that $M$ is locally connected if and only if every cut point of $M$ is parabolic (see Theorem~\ref{thm:BoundaryConnectedness} for details).
%Bowditch has shown that the latter condition holds, for example, whenever $G$ is finitely presented with no infinite torsion subgroup and each peripheral subgroup is either one or two ended.  
%At present, no example is known of a connected Bowditch boundary that is not locally connected.
Since the tree in Theorem~\ref{thm: main thm} is defined purely in terms of the topology of $M$, it is invariant under homeomorphisms in the following sense.

\begin{corollary}
\label{cor:Functorial}
Suppose $M_1,M_2$ are connected Bowditch boundaries of relatively hyperbolic pairs $(G_1,\mathbb{P}_1)$ and $(G_2,\mathbb{P}_2)$.
Let $T(M_i)$ denote the JSJ tree corresponding to $M_i$ as given by Theorem~\ref{Thm: construction of T}.
Each homeomorphism $f\colon M_1\to M_2$ induces a graph isomorphism $\hat{f}\colon T(M_1) \to T(M_2)$.
If $f_i\colon M_i \to M_{i+1}$ are homeomorphisms for $i=1,2$ then $\widehat{f_2f_1} = \hat{f_2}\hat{f_1}$, and also $\widehat\id = \id$.
\end{corollary}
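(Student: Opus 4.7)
The plan is to leverage the fact, already asserted in Theorem~\ref{thm: main thm} and constructed in Theorem~\ref{Thm: construction of T}, that $T(M)$ is built out of purely topological data on $M$: cut points, (inseparable) exact cut pairs, and the way these objects sit inside $M$. Because every ingredient in the definition of $T(M)$ is a topological invariant of $M$, any homeomorphism $f\colon M_1 \to M_2$ must carry the ingredients of $T(M_1)$ bijectively onto those of $T(M_2)$, and it is this transport that will define $\hat f$.

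Concretely, I would first recall from Theorem~\ref{Thm: construction of T} the description of the vertices of $T(M)$ (as cut points together with equivalence classes of points witnessing the ``dual'' structure to the family of cut points and inseparable exact cut pairs) and of its edges (specified by incidence relations between cut points/exact cut pairs and the pieces they separate). Then I would verify, one notion at a time, that each is preserved by homeomorphisms: being a cut point is topological; being a cut pair is topological; exactness of a cut pair, as formulated in Section~\ref{subsec:CutPoints}, is a condition on how the components of $M\setminus\{x,y\}$ accumulate onto $\{x,y\}$, which is plainly topological; inseparability is likewise a topological condition on pairs of pairs. Consequently $f$ induces a bijection on the vertex set underlying $T(M)$, and because $f$ preserves the incidence/separation relations used to define edges, this bijection extends to a graph isomorphism $\hat f\colon T(M_1)\to T(M_2)$.

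For the functoriality statement, I would observe that $\hat f$ is built tautologically: a vertex of $T(M_1)$, regarded as a topological object (a point, a pair, or an equivalence class of points in $M_1$), is sent to its image under $f$, regarded as a topological object in $M_2$. Applying this prescription to $f_2\circ f_1$ manifestly gives $\hat{f_2}\circ\hat{f_1}$, and applying it to $\id_{M}$ gives $\id_{T(M)}$. The only real obstacle is the bookkeeping of the previous paragraph: one must confirm that \emph{every} piece of data used in the construction of Theorem~\ref{Thm: construction of T} is phrased in terms of $M$ alone, with no residual dependence on $G$ or on the peripheral system $\mathbb{P}$. Once that has been checked, the corollary is essentially a formal consequence of the topological nature of the construction.
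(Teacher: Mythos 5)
Your proposal is correct and matches the paper's own (implicit) argument: the paper derives this corollary directly from the observation that the vertex set $Z\sqcup\Pi$ of $T(M)$ (cut points, inseparable exact cut pairs, and pieces) and the incidence relation defining edges are stated purely in terms of the topology of $M$, so a homeomorphism transports the construction and functoriality is tautological. No essential difference from your route.
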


Each vertex of the JSJ tree has a ``type'' in the usual sense of JSJ theory.  In the present setting the possible vertex types are parabolic, $2$--ended, quadratically hanging, and rigid (see Proposition~\ref{prop: vertex types}).  As in \cite{CashenMartin17} the induced automorphism $\hat{f}$ of trees is type preserving and induces homeomorphisms between the limits sets of corresponding vertex groups relative to the family of limit sets of adjacent edge groups.
%Note that in Corollary~\ref{cor:Functorial} we consider arbitrary homeomorphisms that are not required to leave the set of parabolic points invariant.  In general the parabolic points that are not cut points play no role in the JSJ decomposition, and some of them may not be preserved by arbitrary homeomorphisms.

%A theorem of Groff \cite{Grof13} implies that, under certain conditions, quasi-isometries of relatively hyperbolic groups induce homeomorphisms of Bowditch boundaries.
%(Groff's proof of this quasi-isometry theorem is modelled on a construction of Schwartz for rank one symmetric spaces \cite{Schwartz95} and does not involve the mistake mentioned above.)
Let $(G,\mathbb{P})$ and $(G',\mathbb{P}')$ be relatively hyperbolic with finite generating sets $A$ and $A'$.  
A \emph{coarsely Lipschitz map of pairs} $(G,\mathbb{P}) \to (G',\mathbb{P}')$ is a coarsely Lipschitz map $G \to G'$ such that for some $R<\infty$ each peripheral coset $gP$ with $P \in \mathbb{P}$ has image contained in an $R$--tubular neighborhood of some peripheral coset $g'P'$ with respect to the word metric $d_{A'}$.
A \emph{quasi-isometry of pairs} $(G,\mathbb{P}) \to (G',\mathbb{P}')$ is a coarsely Lipschitz map of pairs admitting a quasi-inverse that is also a coarsely Lipschitz map of pairs $(G',\mathbb{P}') \to (G,\mathbb{P})$.
A result of Behrstock--\Drutu--Mosher states that if each $P \in \mathbb{P}$ and each $P' \in \mathbb{P}'$ are non--relatively hyperbolic, then every quasi-isometry $G \to G'$ is a quasi-isometry of pairs \cite[Thm.~4.1]{BehrstockDrutuMosher_Thick} (see also \cite{Schwartz95}).

By \cite{Grof13}, any quasi-isometry $(G,\mathbb{P}) \to (G',\mathbb{P}')$ induces a homeomorphism $\boundary(G,\mathbb{P}) \to \boundary(G',\mathbb{P}')$.
(Groff's proof of this quasi-isometry theorem is modelled on a construction of Schwartz for rank one symmetric spaces \cite{Schwartz95} and does not involve the mistake mentioned above.)

\begin{corollary}
\label{cor:QIInvariant}
Let $(G,\mathbb{P})$ and $(G',\mathbb{P}')$ be relatively hyperbolic with connected boundaries.  Every quasi-isometry of pairs $(G,\mathbb{P}) \to (G',\mathbb{P}')$ induces a vertex-type preserving isomorphism of JSJ trees $T_G \to T_{G'}$.
\end{corollary}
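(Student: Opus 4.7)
The plan is to obtain this corollary as an essentially immediate consequence of Corollary~\ref{cor:Functorial} combined with the theorem of Groff cited in the text. Concretely, I would proceed as follows.

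First, given a quasi-isometry of pairs $q\colon (G,\mathbb{P}) \to (G',\mathbb{P}')$, I would apply Groff's theorem to produce the induced homeomorphism $f = \bndry q \colon \bndry(G,\mathbb{P}) \to \bndry(G',\mathbb{P}')$ between the Bowditch boundaries. Since both boundaries are assumed connected, Theorem~\ref{Thm: construction of T} produces the canonical JSJ trees $T_G = T\bigl(\bndry(G,\mathbb{P})\bigr)$ and $T_{G'} = T\bigl(\bndry(G',\mathbb{P}')\bigr)$. Corollary~\ref{cor:Functorial} then turns $f$ into a graph isomorphism $\hat{f}\colon T_G \to T_{G'}$, which is the map we want.

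It remains to verify that $\hat{f}$ preserves vertex types. This is exactly the content of the remark following Corollary~\ref{cor:Functorial}: because $T_G$ is built intrinsically from cut points and inseparable exact cut pairs of the boundary, and because the parabolic/$2$--ended/quadratically hanging/rigid classification (Proposition~\ref{prop: vertex types}) is also read off from the topology of $M$ and the limit sets of adjacent edge groups, any homeomorphism of boundaries induces a type-preserving isomorphism of the dual trees. So I would simply appeal to this, with $f$ supplied by Groff.

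There is essentially no obstacle: the construction of $T(M)$ in Theorem~\ref{Thm: construction of T} and the functoriality statement of Corollary~\ref{cor:Functorial} have already done all the work, and Groff's quasi-isometry invariance of the Bowditch boundary is cited as a black box. The only thing one should be careful about is that the quasi-isometry is required to be a quasi-isometry \emph{of pairs}, so that Groff's hypothesis is met; the statement of the corollary has this built in. Thus the proof is a two-line composition: apply Groff, then apply Corollary~\ref{cor:Functorial}.
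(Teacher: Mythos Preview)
Your proposal is correct and matches the paper's own approach exactly: the corollary is stated immediately after citing Groff's theorem that a quasi-isometry of pairs induces a boundary homeomorphism, so it is understood to follow by composing that result with Corollary~\ref{cor:Functorial} and the remark on type preservation. There is nothing to add.
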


%An analogous result was shown by Papasoglu \cite{Papasoglu_Quasi-Lines} in the setting of finitely presented groups splitting over $2$--ended subgroups.  We note that relatively hyperbolic groups do not need to be finitely presented, and the splittings considered in this article are over the broader family of $2$--ended and parabolic subgroups. 
%The isomorphism of JSJ trees produced by Corollary~\ref{cor:QIInvariant} preserves the type of vertices in the usual sense of JSJ theory (see Proposition~\ref{prop: vertex types} for details).  
%We remark that Cashen--Martin and Dani--Thomas use the quasi-isometry invariance of Bowditch's JSJ tree in the hyperbolic setting to give a complete quasi-isometry classification of certain families of word hyperbolic groups \cite{CashenMartin17,DaniThomas_JSJ}.
The boundary homeomorphism invariance given by Corollary~\ref{cor:Functorial} and the quasi-isometry invariance given by Corollary~\ref{cor:QIInvariant} could potentially be useful tools for attacking classification problems in families of relatively hyperbolic groups and for better understanding the difference between these two notions of equivalence.

%We remark that we found it useful to establish Proposition~\ref{prop: vertex types} as part of the proof that the tree discussed here is, in fact, a JSJ tree. In retrospect one sees that the vertex types classified in Proposition~\ref{prop: vertex types} must have this form by well-known results on JSJ splittings.

If $(G,\mathbb{P})$ is relatively hyperbolic with connected boundary then the relative quasi-isometry group $\QI(G,\mathbb{P})$ naturally acts on the canonical JSJ tree for $(G,\mathbb{P})$.  The \emph{relative quasi-isometry group} is the group of all self quasi-isometries of pairs modulo those that have finite distance from the identity in the sup-norm.
%By Behrstock--\Drutu--Mosher \cite[Thm.~4.1]{BehrstockDrutuMosher_Thick}, the relative quasi-isometry group is equal to the usual quasi-isometry group $\QI(G)$ when each $P \in \P$ is non--relatively hyperbolic.
In \cite[\S 8]{Grof13}, Groff indicates the beginnings of a study of $\QI(G,\mathbb{P})$ via its action on the canonical JSJ tree.

%%%%%%%%%%%
\subsection{Overview}
%%%%%%%%%%
Section~\ref{sec: Preliminaries} contains some background on JSJ decompositions of groups and the notions of cut points and exact cut pairs in Peano continua. In Section~\ref{sec:RelativelyHyperbolic} we review the structure of relatively hyperbolic groups and their boundaries, two equivalent definitions of a relatively quasiconvex subgroup, and the intersection properties of limit sets of relatively quasiconvex subgroups.  In Section~\ref{sec:NoCutPoint} we focus on the special setting of Bowditch boundaries that have no global cut point.  In this setting, we determine the structure of all local cut points of the boundary according to their valence and their groupings into various types of exact cut pair.
A key result proved in this section is Proposition~\ref{prop: only finitely many}, which states that the inseparable exact cut pairs of the boundary lie in only finitely many $G$--orbits.

The main result of Section~\ref{sec: trees separate} is Proposition~\ref{prop: rel qc}, which states that whenever a relatively hyperbolic group splits with relatively quasiconvex edge groups, each vertex group is also relatively quasiconvex.
A similar result is claimed in \cite{BigdelyWise13}.
In Section~\ref{sec:Decompositions} we study arbitrary actions of a relatively hyperbolic group $G$ on trees with elementary edge stabilizers such that each peripheral subgroup fixes a vertex.
We show that any such tree induces a decomposition of the boundary into halfspaces, corresponding to the halfspaces of the tree.

In Section~\ref{sec:Levitt} we discuss two mistakes related to \cite[Thm.~4.6]{Grof13}, the first of which was discovered by the second author and Genevieve Walsh in work related to \cite{HruskaWalsh_Parabolic}. The other mistake was communicated to the authors by Brian Bowditch.
Next in Section~\ref{sec: simplicial}, we use Bowditch's theory of peripheral splittings \cite{Bow01_Peripheral} to construct a simplicial tree $T$ dual to the nested family of all cut points and inseparable exact cut pairs of $\relbndry$. The tree $T$ records splittings of $(G,\bbp)$ relative to $\bbp$ over elementary subgroups. We also characterize the four types of vertex stabilizers of $T$.
Lastly, in Section~\ref{sec: last section} we show that this dual tree $T$ is a JSJ tree and non-elementary flexible vertex stabilizers of $T$ are quadratically hanging with finite fiber.

\subsection{Acknowledgements}
The authors would like to thank Brian Bowditch, Daniel Groves, and Jason Manning for helpful discussions regarding JSJ decompositions, Rips theory, and relative hyperbolicity.
The authors are grateful to Genevieve Walsh for encouraging them to explore the ideas that led to this article.
The authors are indebted to Dani Wise for explaining an argument for the proof of Proposition~\ref{prop: rel qc}.  Although the proof presented here is a bit different from Wise's suggestion, it is inspired by that discussion.
The authors also thank Wenyuan Yang and the anonymous referee for their helpful feedback.

This work was partially supported by a grant from the Simons Foundation (\#318815 to G. Christopher Hruska).

%%%%%%%%%%%%%%%%%%%%%%%%%%%%%%%%%%%%%%%%%%%%
\section{Preliminaries}
\label{sec: Preliminaries}
%%%%%%%%%%%%%%%%%%%%%%%%%%%%%%%%%%%%%%%%%%%%

%%%%%%%%%
\subsection{JSJ decompositions}
\label{sec:JSJ}
%%%%%%%%%

In this section, we review the notion of JSJ decompositions of groups.  A decomposition of a group as the fundamental group of a graph of groups is equivalent to an action of the group on a simplicial tree without inversions \cite{Serre}.  
We largely follow the Guirardel--Levitt formalism for JSJ decompositions, in which a JSJ tree is determined by certain universal properties among a given family of $G$--actions on trees.
We refer the reader to Guirardel--Levitt \cite{GL_JSJ} for a thorough examination of the concepts briefly reviewed below.

Let $G$ be a finitely generated group acting on a tree $T$ without inversions. 
We assume that all trees considered in this section are \emph{minimal} in the sense that there is no proper invariant subtree.

A subgroup $H<G$ acts \emph{elliptically} on $T$ if it fixes a point of $T$. If $\mathbb{E}$ is a collection of subgroups of $G$ that is closed under conjugation and passing to subgroups, then $T$ is an \emph{$\mathbb{E}$--tree} if every edge stabilizer is a member of $\mathbb{E}$.
If $\P$ is an arbitrary family of subgroups of $G$, an $(\mathbb{E},\P)$--{\it tree} is an $\mathbb{E}$--tree $T$ such that every $P\in\P$ acts elliptically on $T$.  An $(\mathbb{E},\P)$--tree is {\it universally elliptic} if its edge stabilizers act elliptically on every $(\mathbb{E},\P)$--tree. 
If $G$ acts on trees $T$ and $T'$, then $T$ \emph{dominates} $T'$ if there is a $G$--equivariant map $T\to T'$, or equivalently if each vertex stabilizer of $T$ also stabilizes a vertex of $T'$. 
Two $(\mathbb{E},\P)$--trees $T$ and $T'$ are \emph{equivalent} if $T$ dominates $T'$ and $T'$ dominates $T$.

\begin{defn}
An $(\mathbb{E},\P)$--tree $T$ is a \emph{JSJ tree for splittings of $G$ over $\mathbb{E}$ relative to $\P$} if it satisfies the following:
\begin{enumerate}
    \item $T$ is universally elliptic among all $(\mathbb{E},\P)$--trees.
    \item $T$ dominates any other universally elliptic $(\mathbb{E},\P)$--tree.
\end{enumerate}
We note that JSJ trees, when they exist, are typically not unique.  However any two JSJ $(\mathbb{E},\P)$--trees are always equivalent in the above sense. A vertex stabilizer $G_v$ of a JSJ tree over $\mathbb{E}$ relative to $\P$ is \emph{flexible} if there is another $(\mathbb{E},\P)$--tree on which $G_v$ does not act elliptically. 
\end{defn}

\begin{defn}[Quadratically hanging]
A vertex stabilizer $G_v$ of an $(\mathbb{E},\P)$--tree is \emph{quadratically hanging} if it is an extension
\[
   1\rightarrow F\rightarrow G_v\rightarrow \pi_1(\Sigma)\rightarrow 1,
\]
where $\Sigma$ is a compact hyperbolic two-orbifold and $F$ is an arbitrary group called the {\it fiber}. Additionally, it is required that each incident edge stabilizer and each group $G_v\cap gPg^{-1}$ for $P\in\P$ has image in $\pi_1(\Sigma)$ that is either finite or contained in a boundary subgroup of $\pi_1(\Sigma)$.
\end{defn}

\begin{defn}
\label{def: acylindrical}
A tree $T$ on which $G$ acts is $(k,C)$--{\it acylindrical} if the pointwise stabilizer of every arc of length $\geq k+1$ is of order $\leq C$. We say that $T$ is {\it acylindrical} if $T$ is $(k,C)$--acylindrical for some $k$ and $C$.
\end{defn}

\begin{lemma}[\cite{GL_deformation}, \S 4]
\label{lem:acylindrical}
Suppose $G$ is a finitely generated group acting minimally on trees $T$ and $T'$.
If the actions of $T$ and $T'$ are equivalent, then $T$ is acylindrical if and only if $T'$ is acylindrical.
\end{lemma}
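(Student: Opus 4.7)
The plan is to reduce to the case of a single elementary collapse of trees and then exploit the boundedness of the collapse fibers to compare pointwise stabilizers of long arcs. Since $G$ is finitely generated and acts minimally on $T$ and $T'$, both quotient graphs are finite. By the deformation space theory of \cite{GL_deformation}, two equivalent cocompact minimal $G$--trees are connected by a finite sequence of elementary collapses and expansions, so by induction it suffices to establish the lemma when $\pi\colon T'\to T$ is a single elementary collapse of the orbit of an edge $e$ with endpoints $v,w$ in distinct $G$--orbits and with $G_e = G_v$.

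The key geometric input is that every fiber of $\pi$ has diameter at most $2$. Indeed, at each translate of $v$ only the corresponding translate of $e$ is a collapsed edge (since $v$ and $w$ are in distinct orbits), while at each translate of $w$ there are $[G_w:G_e]$ collapsed edges running to translates of $v$, so each fiber is a star centered at a translate of $w$. Consequently, along any geodesic arc of length $L$ in $T'$, consecutive runs of collapsed edges have length at most $2$, and the projection to $T$ is a geodesic arc of length at least $(L-2)/3$; conversely, every geodesic arc in $T$ lifts to a geodesic arc in $T'$ of at least the same length, each of its edges having a unique non-collapsed preimage edge in $T'$.

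Both implications now follow by comparing pointwise stabilizers of matched arcs. If $T$ is $(k,C)$--acylindrical and $\alpha'$ is an arc in $T'$ of length at least $3(k+1)+2$, then equivariance of $\pi$ shows that the pointwise stabilizer of $\alpha'$ is contained in the pointwise stabilizer of $\pi(\alpha')$, an arc of length at least $k+1$ whose pointwise stabilizer has order bounded by $C$. Conversely, if $T'$ is $(k,C)$--acylindrical and $\alpha$ is an arc in $T$ of length at least $k+1$, then any element $h$ fixing $\alpha$ pointwise must fix each non-collapsed lifted edge of the lift $\hat\alpha$ pointwise (since these are unique preimages), and hence fix their endpoints; since an isometry of a tree that fixes two points fixes the geodesic between them, $h$ fixes the bridges through the intervening collapsed fibers and therefore fixes the entire lifted arc $\hat\alpha$, whose pointwise stabilizer is bounded by $C$ via the acylindricity of $T'$. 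The main technical obstacle is the verification of the fiber-diameter bound for an elementary collapse and the bookkeeping of the induction along the sequence of deformation moves; once those are in place, the comparison of stabilizers is essentially forced by the convexity of fixed-point sets of tree isometries.
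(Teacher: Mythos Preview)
Your argument is correct, but it takes a genuinely different route from the paper's. The paper invokes \cite[Thm.~3.8]{GL_deformation} to obtain directly an equivariant quasi-isometry $T\to T'$ between any two equivalent cocompact minimal $G$--trees; since equivariance sends the fixed-point set of a subgroup $H$ in $T$ into the fixed-point set of $H$ in $T'$, a bound on the diameter of the latter immediately yields a bound on the diameter of the former via the quasi-isometry constants, and the result follows in one step. You instead factor the equivalence through a finite chain of elementary collapses and expansions and analyze each move by hand via the fiber-diameter bound and the convexity of fixed-point sets. Your approach is more elementary in that it avoids quoting the quasi-isometry theorem and makes the constants explicit ($(k,C)$ becoming roughly $(3k+4,C)$ under a single collapse), at the cost of some bookkeeping for the induction; the paper's approach is shorter and handles both directions at once without decomposing into moves. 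Both proofs ultimately rest on the same underlying principle---that equivalent cocompact $G$--trees admit equivariant maps with uniformly bounded fibers---but the paper packages this as a single coarse-geometric statement while you unpack it combinatorially.
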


\begin{proof}
If $G$ is finitely generated and acts minimally on a tree $T$, then the action has only finitely many orbits of edges \cite[Cor.~3.8]{SW79}.
Assume $T$ and $T'$ are equivalent, that $T'$ is $(k',C)$--acylindrical,
and that $H\le G$ is a subgroup of order at least $C+1$.
Then there exists an equivariant quasi-isometry $T\to T'$ (see \cite[Thm.~3.8]{GL_deformation}).
By equivariance, the fixed point set of $H$ in $T$ maps into the fixed point set of $H$ in $T'$.
Since the subtree of $T'$ fixed by $H$ has diameter at most $k'$, the subtree of $T$ fixed by $H$ has diameter at most $k$, where $k$ depends on $k'$ in terms of the quasi-isometry constants.
It follows that $T$ is $(k,C)$--acylindrical.
\end{proof}

%%%%%%%%%%%%%%%
\subsection{Global and local cut points in Peano continua}
\label{subsec:CutPoints}
%%%%%%%%%%%%%%%

In this section we establish notation and terminology for various separating or locally separating sets in a Peano continuum, \emph{i.e.}, a compact, connected, locally connected, metrizable space. A \emph{\textup{(}global\textup{)} cut point} of $M$ is a point $\eta\in M$ such that $M\setminus\{\eta\}$ is disconnected.
A subset $C \subseteq M$ is a \emph{cyclic element} if $C$ consists of a single cut point or contains a non-cutpoint $p$ and all points $q$ that are not separated from $p$ by any cut point of $M$.
A cyclic element is \emph{nontrivial} if it contains at least two points.
Each Peano continuum is the union of its cyclic elements, and each pair of cyclic elements intersects in at most one point that is a cut point of $M$ (see Kuratowski \cite[\S 52]{Kuratowski_VolII}).

%Let $N \subseteq M$ be the set of all points that are not cut points. Consider the equivalence relation $R$ on $N$ defined by $\zeta R \xi$ if $\zeta$ and $\xi$ cannot be separated by a cut point of $M$. A \emph{cyclic element} of $M$ is the closure in $M$ of an $R$--equivalence class containing at least two points. (The ``cyclic element'' terminology is due to Whyburn. See, for instance, \cite[\S 52]{Kuratowski_VolII}).

A \emph{cut pair} is a set of two distinct points $\{ \zeta,\xi \}\subset M$ such that $M \setminus \{\zeta,\xi\}$ is disconnected but neither $\zeta$ nor $\xi$ is a cut point of $M$.  The next result relates the structures of cut points and cut pairs in $M$.

\begin{lemma}
\label{lem:CutPair}
Every cut pair of $M$ is contained in a unique cyclic element.
\end{lemma}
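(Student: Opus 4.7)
The plan is to invoke the standard structure theorem for cyclic elements of a Peano continuum (as in Kuratowski, \S 52): the cyclic element containing a non-cut-point $p$ equals the set of points $q$ such that no cut point of $M$ separates $p$ from $q$, and any two distinct cyclic elements meet in at most one point, which must be a cut point. Uniqueness then follows immediately, since any two cyclic elements both containing the pair $\{\zeta,\xi\}$ would share two points and hence coincide.

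For existence, I would argue by contradiction: assuming some cut point $\eta$ of $M$ separates $\zeta$ from $\xi$, I would show that $M\setminus\{\zeta,\xi\}$ is in fact connected, contradicting the cut pair hypothesis. Write $M\setminus\{\eta\}=U\sqcup V$ as a disjoint union of nonempty open sets with $\zeta\in U$ and $\xi\in V$; neither set need be connected, but since $M$ is connected one has $\overline{U}=U\cup\{\eta\}$ and $\overline{V}=V\cup\{\eta\}$.

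The key step is the decomposition
\[
   M\setminus\{\zeta,\xi\} = A\cup B, \quad A=(U\setminus\{\zeta\})\cup\{\eta\}, \quad B=(V\setminus\{\xi\})\cup\{\eta\},
\]
which satisfies $A\cap B=\{\eta\}$, so connectedness of $M\setminus\{\zeta,\xi\}$ reduces to connectedness of $A$ and $B$. To prove $A$ is connected, I would suppose instead a separation $A=A_1\sqcup A_2$ with $\eta\in A_1$, and verify that $(A_1\cup V,\,A_2)$ gives a separation of $M\setminus\{\zeta\}$: the set $A$ is closed in $M\setminus\{\zeta\}$ because $U\cup\{\eta\}$ is closed in $M$, and the closure of $V$ in $M\setminus\{\zeta\}$ is $V\cup\{\eta\}\subseteq A_1\cup V$. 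This contradicts the hypothesis that $\zeta$ is not a cut point of $M$; the argument for $B$ is symmetric.

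The main technical obstacle to watch for is that the sets $U$ and $V$ in the splitting $M\setminus\{\eta\}=U\sqcup V$ need not themselves be connected, so one cannot reason directly in terms of components. Instead, everything reduces to the two clean facts that $V$ is open with closure $V\cup\{\eta\}$, and that $U\cup\{\eta\}$ is closed in $M$; from these the required closedness statements in the subspaces $M\setminus\{\zeta\}$ and $M\setminus\{\xi\}$ follow by a short point-set topology chase.
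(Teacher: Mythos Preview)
Your proof is correct, but the paper takes a shorter, essentially dual route. Rather than assuming a cut point $\eta$ separates $\zeta$ from $\xi$ and deducing that $M\setminus\{\zeta,\xi\}$ is connected, the paper starts from the disconnection of $M\setminus\{\zeta,\xi\}$ and produces a connected set in $M\setminus\{\eta\}$ joining $\zeta$ to $\xi$: since $\eta\ne\zeta,\xi$, the point $\eta$ lies in at most one component of $M\setminus\{\zeta,\xi\}$, so the closure of any other component is a connected subset of $M\setminus\{\eta\}$ containing both $\zeta$ and $\xi$. This one-line argument implicitly uses that both $\zeta$ and $\xi$ lie in the closure of every component of $M\setminus\{\zeta,\xi\}$, which follows immediately from the hypothesis that neither is a cut point. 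Your approach trades this observation for a more hands-on point-set argument; it is longer but entirely self-contained and avoids invoking that closure fact, which the paper only states explicitly a few lines later.
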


\begin{proof}
If $\{\zeta,\xi\}$ is a cut pair
and $\eta$ is a cut point, the closure of any component of $M\setminus\{\zeta,\xi\}$ not containing
$\eta$ is a connected subset of $M\setminus\{\eta\}$ containing both $\zeta$ and $\xi$.
\end{proof}

A point $\zeta\in M$ is a \emph{local cut point} if either $\zeta$ is a global cut point or $M\setminus\{\zeta\}$ is connected and has more than one end.  If $\zeta$ is a local cut point but not a global cut point, we define the {\it valence}  $\val(\zeta)$ to be the number of ends of $M\setminus\{\zeta\}$.  
The local cut points of $M$ naturally fall into the following three families.
Let $M(2)$ be the set of all \emph{bivalent} points of $M$, \emph{i.e.}, points of valence two.
Let $M(3+)$ be the set of all \emph{multivalent} points in $M$, \emph{i.e.}, points of finite valence three or greater.
Similarly let $M(\infty)$ be the set of \emph{apeirovalent} points, \emph{i.e.}, those points with infinite valence.

The two points of a cut pair are always local cut points, but never global cut points.  A cut pair $\{\zeta,\xi\}$ in a Peano continuum $M$ can have only a finite number of complementary components in $M$ and $\zeta,\xi$ both lie in the closure of each such component (see \cite[Cor.~3.4]{Gur}).

A cut pair $\{\zeta,\xi\}$ is \emph{exact} if $\val(\zeta)=\val(\xi)= n$, where $n$ is the number of components of $M \setminus \{\zeta,\xi\}$. Since $M$ is locally connected, it follows that an exact cut pair must have finite valence (see \cite[Thm.~3-10]{HockingYoung_Topology}).
An exact cut pair is \emph{bivalent} or \emph{multivalent} depending on the valence of its points.

According to \cite[Lem.~3.8]{Bow98_JSJ} multivalent exact cut pairs are always disjoint.  In order to study the more intricate structure of bivalent exact cut pairs, Bowditch introduces an equivalence relation $\sim$ on $M(2)$, defined by $\zeta\sim \xi$ if either $\zeta=\xi$ or $\{\zeta,\xi\}$ is a bivalent exact cut pair \cite[\S 3]{Bow98_JSJ}.

The closure $\overline{N}$ in $M$ of a $\sim$--class $N$ containing at least three elements is a \emph{necklace}.  By \cite[Lem.~3.2]{Bow98_JSJ}, every necklace $\overline{N}$ is \emph{cyclically separating} in the following sense.
For each finite subset $F \subseteq \overline{N}$ there is a map $i\colon F \to S^1$ such that for all $a,b,c,d\in F$ the points $a$ and $c$ are separated by $\{b,d\}$ in $M$ if and only if $i(a)$ and $i(c)$ are separated by $\bigl\{i(b),i(d)\bigr\}$ in $S^1$.
A \emph{jump} in a necklace $\overline{N}$ is a pair of points $\{a,b\}$ that are not separated in $M$ by any pair of points $\{c,d\}$ of $\overline{N}$.

An exact cut pair is \emph{isolated} if it is either multivalent or equal to an entire $\sim$--class of bivalent points.
An exact cut pair $\{\zeta,\xi\}$ of $M$ is \emph{inseparable} if $\zeta$ and $\xi$ do not lie in distinct components of the complement of any other exact cut pair.

\begin{lemma}
\label{lemma: cut not separated by M(3+)}
If $M$ is a Peano continuum with no cut points, then the following hold.
\begin{enumerate}
    \item 
    \label{item:MultivalentInseparable}
    Multivalent exact cut pairs are inseparable.
    \item 
    \label{item:CutNotSepbyMulti} 
    A cut pair cannot be separated by a multivalent exact cut pair.
\end{enumerate}
\end{lemma}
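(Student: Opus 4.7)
The plan is to prove both parts via a single pigeonhole observation. Since $\{\zeta,\xi\}$ is multivalent exact, $M\setminus\{\zeta,\xi\}$ has $n\geq 3$ components $U_1,\ldots,U_n$, each of which is open in $M$ (as a connected component of an open set in a locally connected space) and has both $\zeta$ and $\xi$ in its closure. Any two-point subset of $M\setminus\{\zeta,\xi\}$ meets at most two of these $U_i$, so multivalence always leaves a spare $U_k$ that avoids it. Both parts will exploit such a spare component.

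For part (1), suppose toward contradiction that $\{\alpha,\beta\}$ is a second exact cut pair with $\zeta\in V_1$ and $\xi\in V_2$ for distinct components $V_1,V_2$ of $M\setminus\{\alpha,\beta\}$. Necessarily $\{\alpha,\beta\}\cap\{\zeta,\xi\}=\emptyset$, so the observation above furnishes an index $k$ with $U_k\cap\{\alpha,\beta\}=\emptyset$. The connected set $U_k$ lies in $M\setminus\{\alpha,\beta\}$, so it is contained in a single component $V_j$, and hence $\zeta,\xi\in\overline{U_k}\subseteq\overline{V_j}$. Since $V_1$ is an open neighborhood of $\zeta$ meeting $\overline{V_j}$, we have $V_1\cap V_j\neq\emptyset$, forcing $V_1=V_j$; the analogous argument at $\xi$ yields $V_2=V_j$, contradicting $V_1\neq V_2$.

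For part (2), assume a cut pair $\{\gamma,\delta\}$ has $\gamma\in U_1$ and $\delta\in U_2$. Let $W_1,\ldots,W_r$, $r\geq 2$, be the components of $M\setminus\{\gamma,\delta\}$, so each $\overline{W_s}$ contains both $\gamma$ and $\delta$. Choose $U_\ell$ disjoint from $\{\gamma,\delta\}$ via the same spare-component trick; then $U_\ell\subseteq W_j$ for some $j$, and the openness argument from part (1) upgrades this to $\zeta,\xi\in W_j$. Now pick any $j'\neq j$. The connected set $W_{j'}$ is disjoint from $\{\gamma,\delta,\zeta,\xi\}$, so $W_{j'}\subseteq U_i$ for some $i$. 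Consequently $\gamma,\delta\in\overline{W_{j'}}\subseteq\overline{U_i}$, and openness of $U_1,U_2$ forces $U_i=U_1$ and $U_i=U_2$, contradicting $U_1\neq U_2$.

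I do not anticipate a serious obstacle; the argument is essentially pigeonhole supported by elementary point-set topology. The one recurring subtlety worth flagging is the step that upgrades an inclusion like $U_k\subseteq V_j$ to $\zeta,\xi\in V_j$ itself, which relies on the standard fact that the connected components of an open subset of a locally connected space are open, together with the hypothesis that each complementary component of a cut pair in a Peano continuum has both points of the pair in its closure.
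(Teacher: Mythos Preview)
Your proposal is correct and follows essentially the same approach as the paper: both arguments exploit the pigeonhole observation that a multivalent exact cut pair has at least three complementary components, so a spare component avoiding the other cut pair always exists, and its closure serves as a connected witness placing both points in a single complementary component. Your presentation is slightly more elaborate in the step upgrading $\zeta,\xi\in\overline{V_j}$ to $\zeta,\xi\in V_j$ via openness of components, whereas the paper simply notes that the closure $\overline{C}$ of the spare component is a connected subset of the complement containing both points; but the underlying reasoning is the same.
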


\begin{proof}
By exactness, the complement $M\setminus\{\sigma,\tau\}$ has at least three components.
In order to show (\ref{item:MultivalentInseparable}), suppose $\{\zeta,\xi\}$ is another cut pair of $M$ (not necessarily exact).
It suffices to consider the case when $\{\sigma,\tau\}$ is disjoint from $\{\zeta,\xi\}$.
But then at least one component $C$ of $M\setminus\{\sigma,\tau\}$ does not intersect $\{\zeta,\xi\}$.  The closure $\overline{C}$ is a connected set in $M\setminus\{\zeta,\xi\}$ containing both $\sigma$ and $\tau$, so $\{\zeta,\xi\}$ does not separate $\{\sigma,\tau\}$, establishing (\ref{item:MultivalentInseparable}).

To show (\ref{item:CutNotSepbyMulti}), let $\{\sigma,\tau\}$ be a multivalent exact cut pair, and let $\{\zeta,\xi\}$ be any other cut pair.  As before we assume that the two cut pairs are disjoint.  By (\ref{item:MultivalentInseparable}) we know that $\sigma$ and $\tau$ are in the same component $C$ of $M \setminus \{\zeta,\xi\}$.  Since $\{\zeta,\xi\}$ is a cut pair, its complement contains at least one other component $C'$ that contains neither $\sigma$ nor $\tau$.
Thus the closure $\overline{C}'$ is a connected set in the complement of $\{\sigma,\tau\}$ containing both $\zeta$ and $\xi$, which gives (\ref{item:CutNotSepbyMulti}).
\end{proof}

%%%%%%%%%%%%%%%%%%%%%%%
\section{Relative hyperbolicity and relative quasiconvexity}
\label{sec:RelativelyHyperbolic}
%%%%%%%%%%%%%%%%%%%%%%%

In this section we review relatively hyperbolic groups, their boundaries, and their relatively quasiconvex subgroups.  We discuss several of their basic properties.

In \cite{CannonCooper92}, Cannon--Cooper introduce a key construction for understanding the geometry of a relatively hyperbolic group.  They construct a model of a horoball based on an arbitrary graph and then form a ``cusped space'' by gluing such horoballs onto the peripheral cosets in a Cayley graph. 
Bowditch introduced a slightly different model for this construction in \cite{BowditchRelHyp}.
Due to work of Groves--Manning and Groves--Manning--Sisto, either of these cusped space models may be used to characterize the notion of a relatively hyperbolic group \cite{GM08,GrovesManningSisto_RelativeCannon}.
We focus on Bowditch's model in this paper.

The definition of the cusped space involves the warped product of a pair of spaces, defined as follows.
%, introduced by Bishop--O'Neill in the Riemannian setting and generalized to the setting of length spaces by Chien-Hsiung Chen \cite{BishopONeill69,Chen99}.

\begin{defn}[Warped product]
Let $(F,d_F)$ and $(B,d_B)$ be two length spaces, and let $f \colon B \to [0,\infty)$ be a continuous function called the \emph{warping function}.
Let $\sigma \colon [0,1] \to F \times B$ be a path in $F\times B$ with component functions $\gamma\colon [0,1]\to F$ and $\beta\colon [0,1] \to B$.  The \emph{length} of $\sigma$ is given by
\[
   \ell(\sigma) = \sup_\tau \sum_{i=1}^{n(\tau)}  \Bigl( f^2 \bigl( \beta(t_i) \bigr) \, d^2_F \bigl( \gamma(t_i), \gamma(t_{i-1}) \bigr) + d^2_B \bigl( \beta(t_i),\beta(t_{i-1}) \bigr)  \Bigr)^{1/2}
\]
where the supremum is taken over all partitions $\tau$ of $[0,1]$ given by $0=t_0<t_1<\cdots<t_{n(\tau)}=1$.
If the warping function $f$ has no zeros then the length $\ell(\sigma)$ defined above determines a length metric $d_f$ on the set $F \times B$.  The resulting metric space is the \emph{warped product} $F \times_f B$
\end{defn}

\begin{defn}[The cusped space]
\label{def: cusped space}
Suppose $\Gamma$ is any metric graph.  The \emph{metric horoball} based on $\Gamma$ is the warped product
\[
   \text{cusp}(\Gamma) = \Gamma  \times_{e^{-t}} [0,\infty)
\]
Let $G$ be a group with a generating set $A$ and a collection $\mathbb{P}$ of finitely many finitely generated subgroups called \emph{peripheral subgroups}.
The generating set $A$ is \emph{adapted to $\mathbb{P}$} if for each $P \in \mathbb{P}$ the set $A \cap P$ generates $P$.
Let $\Gamma=\Gamma(G,A)$ be the Cayley graph of $G$ with respect to a finite generating set $A$ that is adapted to $\mathbb{P}$.  We endow $\Gamma$ with a length metric in which each edge has length one.
A \emph{peripheral coset} in $(G,\mathbb{P})$ is a coset $gP$ with $g \in G$ and $P \in \mathbb{P}$. For each peripheral coset $gP$, let $\Gamma_{gP}$ be the full subgraph of $\Gamma$ with vertex set $gP$.  Note that $\Gamma_{gP}$ is a copy of the Cayley graph of $P$ with respect to the generating set $A \cap P$.

The \emph{cusped space} $X(G,\mathbb{P},A)$ is the metric $2$--complex formed from the Cayley graph $\Gamma$ by attaching a metric horoball $\text{cusp}(\Gamma_{gP})$ to $\Gamma_{gP}$ for each peripheral coset $gP$.
In this attaching, we identify the copy of $\Gamma_{gP}$ in $\Gamma$ with the isometric copy $\Gamma_{gP} \times \{0\}$ in $\text{cusp}(\Gamma_{gP})$. 
A point of $X(G,\mathbb{P},A)$ is either a point of the Cayley graph $\Gamma$ or a triple $(gP,x,t)$ where $gP$ is a peripheral coset, $x$ is a point of $\Gamma_{gP}$, and $t \in [0,\infty)$.
\end{defn}
 
%The definitions of horoballs and cusped space given above are a variant due to Bowditch \cite{BowditchRelHyp} of Cannon--Cooper's construction.

\begin{defn}[Relative hyperbolicity]
\label{def: rel hyp}
Suppose $G$ is a group, $\mathbb{P}$ is a finite family of subgroups of $G$, and $A$ is a finite generating set for $G$ that is adapted to $\mathbb{P}$.  Then $(G,\mathbb{P})$ is \emph{relatively hyperbolic} if the cusped space $X(G,\mathbb{P},A)$ is $\delta$--hyperbolic for some $\delta>0$. The \emph{Bowditch boundary} $\boundary(G,\mathbb{P})$ is the Gromov boundary of the cusped space $X$.
\end{defn}

By \cite{GM08,GrovesManningSisto_RelativeCannon} the definition is equivalent to other definitions of relative hyperbolicity in the literature. 
We note that relative hyperbolicity does not depend on the choice of finite adapted generating set (see \cite{GrovesManningSisto_RelativeCannon}).  If $A$ and $A'$ are two finite adapted generating sets, there is a $G$--equivariant homeomorphism between the boundaries of the respective cusped spaces by \cite{BowditchRelHyp}.

\begin{theorem}[Bowditch, Dasgupta]
\label{thm:BoundaryConnectedness}
Let $(G,\mathbb{P})$ be relatively hyperbolic.
The boundary $M=\boundary(G,\mathbb{P})$ is a compact, metrizable space with the following connectedness properties:
\begin{enumerate}
 \item
 \label{item:Connected}
 The boundary is connected if and only if every $P \in \mathbb{P}$ is infinite and $G$ does not split over a finite subgroup relative to $\mathbb{P}$.
 \item
 \label{item:LocallyConnected}
 Suppose $M$ is connected. Then $M$ is locally connected, and every global cut point of $M$ is a parabolic fixed point, \emph{i.e.}, the unique fixed point of a conjugate of a peripheral subgroup.
\end{enumerate}
\end{theorem}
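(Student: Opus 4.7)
The plan is to work throughout in the cusped space $X=X(G,\mathbb{P},A)$, exploiting the correspondence between connectedness of the Gromov boundary $\boundary X$ and the topological ends of $X$ outside of its horoball neighborhoods.

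For part~(\ref{item:Connected}), I would prove the ``only if'' direction by contrapositive. If some $P\in\mathbb{P}$ is finite, then the horoball attached to the peripheral coset $P$ is quasi-isometric to a ray and contributes an isolated ideal point to $\boundary X$; equivalently, removing $P$ from $\mathbb{P}$ preserves relative hyperbolicity and exhibits $G$ as a trivial splitting over the finite group $P$. If instead $G$ admits a nontrivial splitting over a finite subgroup relative to $\mathbb{P}$, then $G$ acts minimally on a Bass--Serre tree $T$ with finite edge stabilizers and every peripheral subgroup vertex-elliptic. Each edge of $T$ lifts to a uniformly thin separating subset of $X$; its complementary components accumulate to disjoint nonempty closed subsets of $\boundary X$, disconnecting it. For the converse, assume $\boundary X$ is disconnected. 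A disconnection produces a pattern of $G$--finite separating tracks in $X$, and since $(G,\mathbb{P})$ is finitely presented relative to $\mathbb{P}$, Dunwoody's accessibility theorem extracts a minimal $G$--tree with finite edge stabilizers on which every peripheral subgroup acts elliptically, which is the required splitting.

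For part~(\ref{item:LocallyConnected}), I would follow the Bestvina--Mess/Swarup strategy, adapted by Bowditch and Dasgupta to the cusped setting. The starting point is the description of $M=\boundary X$ as an inverse limit of nerve graphs encoding the structure of $X$ at increasing depths. Suppose $M$ is connected but fails to be locally connected at some $\xi\in M$. A standard extraction argument produces a nested family of small cut sets converging to $\xi$, and hence an uncountable collection of distinct cut points accumulating at $\xi$. The cut point characterization, handled in tandem, then forces every such cut point to be a parabolic fixed point; but there are only countably many parabolic fixed points, giving the desired contradiction. To establish the cut point characterization itself, a non-parabolic global cut point $\xi$ would be a conical limit point, so a north--south dynamics argument applied to a loxodromic element with attracting fixed point at $\xi$ produces infinitely many translated cut points converging to $\xi$; organizing these translates using the tree of cyclic elements of $M$ contradicts the compactness and connectedness of $M$.

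The hardest step is controlling the local topology at parabolic fixed points. Near such a point $p$ with stabilizer $P$, the local model of $M$ is governed by the ``link'' of the horoball at $p$, namely by how the limit sets of the translates of the other peripheral cosets accumulate at $p$ along $P$--orbits inside the horoball. Showing that this local model is itself connected (and not a Hawaiian-earring-like object) requires a delicate inverse-limit analysis, and it is precisely here that finite generation of each peripheral subgroup is essential: without it, the horoball has the wrong end structure and the local link genuinely fails to be connected. Bowditch's original arguments cover peripheral subgroups with controlled end structure, and Dasgupta's contribution is to push this analysis through for arbitrary finitely generated peripherals.
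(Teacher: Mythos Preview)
The paper does not prove this theorem at all: it is stated as a result of Bowditch and Dasgupta, and the ``proof'' consists entirely of citations---compactness and metrizability from Gromov, part~(\ref{item:Connected}) from Bowditch's original paper on relatively hyperbolic groups, and part~(\ref{item:LocallyConnected}) from Dasgupta's thesis (with Bowditch's earlier special case). Your proposal instead attempts to outline the content of those cited works, which is a different undertaking.

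Your sketch for part~(\ref{item:Connected}) is broadly in the spirit of Bowditch's arguments, though he works primarily with the convergence-group characterization rather than directly with tracks in the cusped space.

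Your sketch for part~(\ref{item:LocallyConnected}) contains a genuine gap. The claim that failure of local connectedness at a point forces an \emph{uncountable} family of global cut points is not a general fact about continua (the Warsaw circle is connected, not locally connected, and has no cut points), and it is not how Bowditch or Dasgupta proceed. The actual route is roughly the reverse of yours: one first establishes that every global cut point is parabolic by analyzing the pretree of cut points and the induced action on an $\mathbb{R}$--tree (this is where the Swarup/Bowditch/Levitt machinery enters, not a direct north--south argument), and then one proves local connectedness by showing that each nontrivial cyclic element is itself locally connected and that these pieces assemble well at the parabolic cut points. Your final paragraph about the local structure near parabolic points is closer to the real difficulty, and it is indeed at that step that Bowditch needed hypotheses on the peripheral subgroups which Dasgupta later removed.
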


\begin{proof}
It is well known that a proper $\delta$--hyperbolic space always has compact, metrizable boundary \cite{Gro87}.
If some peripheral subgroup is finite, then the boundary has isolated points.
When all peripheral subgroups are infinite, the equivalence in (\ref{item:Connected}) is due to Bowditch \cite{BowditchRelHyp}.
Conclusion (\ref{item:LocallyConnected}) is a theorem of Dasgupta, which was established earlier in a special case by Bowditch \cite{Dasgupta_Thesis,Bow01_Peripheral}.
(For the original hyperbolic case, see \cite{Swa}.)
\end{proof}

If $(G,\P)$ is relatively hyperbolic, its relatively quasiconvex subgroups are a natural family of subgroups that plays a key role in organizing the study of relatively hyperbolic groups (see for example \cite{AgolGrovesManning_QCERF,Hru10,Agol13,HruskaWise_Finiteness,WiseHierarchies}).

We now introduce two equivalent notions of relative quasiconvexity that will be used in Section~\ref{sec: trees separate}. For finitely generated subgroups, these definitions are known to be equivalent by a result of Manning--Mart\'{i}nez \cite{ManningMartinez10}. 

\begin{defn}[QC-1]
\label{def: AGM RC}
Let $(G,\bbp)$ and $(H,\mathbb{O})$ be relatively hyperbolic, and let $A$ and $B$ be finite generating sets of $G$ and $H$ that are adapted to $\bbp$ and $\mathbb{O}$ respectively.  Let $\phi\colon H \rightarrow G$ be a monomorphism. Assume that $\phi(O)$ is conjugate in $G$ into some $P\in\bbp$ for each $O\in\mathbb{O}$. We extend $\phi$ to a map $\check{\phi}\colon X(H,\mathbb{O},B)\hookrightarrow X(G,\bbp,A)$ as follows. For each $O_i\in\mathbb{O}$ choose an element $g_i\in G$ (of minimal length) and some $P_{i} \in \mathbb{P}$ such that $\phi(O_i)\subset g_iP_{i}g_i^{-1}$. For each $h\in H$ define $\check{\phi}(h)=h$, and extend equivariantly to a map from the Cayley graph of $H$ to the Cayley graph of $G$.
For each point of a horoball of $X(H,\mathbb{O},B)$ define
\[
   \check{\phi}(kO_i,x,t)=(\phi(k)g_iP_{i},\phi(x)g_i,t).
\]
A subgroup $H$ of $G$ is {\it relatively quasiconvex} if $H$ has a peripheral structure such that the extension $\check{\iota}$ of the inclusion map $\iota\colon H\hookrightarrow G$ has quasiconvex image.
\end{defn}

\begin{defn}[QC-2]
\label{def: Hruska rel qc}
Assume that $(G, \bbp)$ is relatively hyperbolic with cusped space $X(G,\mathbb{P},A)$.
A subgroup $H$ of $G$ is {\it relatively quasiconvex} if for any choice of base point $x$ in the Cayley graph $\Gamma(G,A)$ there exists a constant $\mu$ such for any geodesic $c$ in $X$ connecting two points of $Hx$ the set $c\cap \Gamma(G,A)$ is contained in the $\mu$--neighborhood of $Hx$ with respect to the word metric. 
\end{defn}

\begin{theorem}[\cite{ManningMartinez10}, Thm.~A.10]
\label{thm: ManningMartinez}
Let $(G,\bbp)$ be relatively hyperbolic with finite adapted generating set $A$. Let $H$ be a finitely generated subgroup of $G$. Then $H$ satisfies \textup{(QC-1)} if and only if $H$ satisfies \textup{(QC-2)}.
\end{theorem}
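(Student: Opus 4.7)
The plan is to establish each direction of the equivalence separately; both arguments rely on a careful analysis of how geodesics in the cusped space $X = X(G,\bbp,A)$ interact with horoballs and with the image of $X(H,\mathbb{O},B)$ under $\check\iota$.

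For (QC-1) $\Rightarrow$ (QC-2), assume $\check\iota\bigl(X(H,\mathbb{O},B)\bigr)$ is $K$-quasiconvex in $X$. Given a geodesic $c$ in $X$ connecting two points of $Hx$ and any $p \in c \cap \Gamma(G,A)$, quasiconvexity produces $q$ in the image with $d_X(p,q) \le K$. The subtle point is converting the $X$-distance bound into a bound on the word distance $d_A(p, Hx)$, since the two metrics differ by a logarithmic factor inside horoballs. The key observation is that a short $X$-path from $p$ to $q$ enters at most a bounded number of horoballs and to depth at most $K$, so when it crosses a horoball floor, the crossing point lies in a peripheral coset of $G$. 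If that horoball is in the image of $\check\iota$, then the floor point is close in the word metric to a translate of the form $k g_i$ with $k \in H$; and one reduces to this case by pushing $q$ to the nearest point on the Cayley graph part of the image, which is $H$ itself. A standard bookkeeping of these excursions then yields the desired constant $\mu$.

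For (QC-2) $\Rightarrow$ (QC-1), the first step is to construct a peripheral structure $\mathbb{O}$ on $H$: take representatives of all conjugacy classes $[H\cap gPg^{-1}]$ with $P \in \bbp$ such that $H \cap gPg^{-1}$ is infinite. Finiteness of the set of such classes follows from (QC-2) together with local finiteness of the Cayley graph at fixed depth in each horoball. One then extends $\iota$ to $\check\iota \colon X(H,\mathbb{O},B) \to X$ as in the statement. To see that $(H,\mathbb{O})$ is itself relatively hyperbolic, it suffices to check that $X(H,\mathbb{O},B)$ is Gromov hyperbolic, which follows once one shows that $\check\iota$ is a quasi-isometric embedding: the image then inherits hyperbolicity from $X$. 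Finally, quasiconvexity of the image in $X$ is obtained by applying (QC-2) to each $X$-geodesic between two points of $Hx$ and controlling the excursions of the geodesic into horoballs of $G$.

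The main obstacle is proving that $\check\iota$ is a quasi-isometric embedding in the direction (QC-2) $\Rightarrow$ (QC-1). One inequality (the upper bound on $d_X$) is immediate since $\check\iota$ is Lipschitz by construction. The reverse inequality is where (QC-2) is essential: one needs a lower bound on $d_X\bigl(\check\iota(y_1), \check\iota(y_2)\bigr)$ in terms of $d_{X(H,\mathbb{O},B)}(y_1, y_2)$, and this must rule out the possibility that an $X$-geodesic between two points of $H$ takes long shortcuts through horoballs of $G$ that do not arise from horoballs of $H$. Hypothesis (QC-2) is precisely what forbids such shortcuts, since it constrains the Cayley graph portion of any such geodesic to stay near $Hx$. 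The technical heart of the argument is the bookkeeping that matches excursions of $X$-geodesics into $G$-horoballs with excursions of corresponding $X(H,\mathbb{O},B)$-geodesics into $H$-horoballs, with constants uniform in the relevant parameters.
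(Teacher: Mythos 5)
The paper does not actually prove this statement: it is imported verbatim from Manning--Mart\'{i}nez \cite[Thm.~A.10]{ManningMartinez10}, so there is no internal argument to compare yours against. Judged on its own terms, your outline does follow the standard strategy of that reference (convert cusped-space distance bounds into word-metric bounds near horoball floors for one direction; build the induced peripheral structure and control horoball excursions for the other), and the (QC-1) $\Rightarrow$ (QC-2) half is essentially right as sketched: a point $p\in c\cap\Gamma(G,A)$ lies at depth $0$, a nearby image point $q$ lies at depth at most $K$, pushing $q$ vertically to the floor of its image horoball lands in a set of the form $kO_ig_i\subseteq Hg_i$, and two depth-zero points at bounded $X$--distance are at word distance bounded by an exponential function of that bound.

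The genuine gap is in (QC-2) $\Rightarrow$ (QC-1), where everything that makes the theorem nontrivial is asserted rather than proved. First, before $\check{\iota}$ can even be defined you need $\mathbb{O}$ to be a \emph{finite} family of \emph{finitely generated} subgroups (the paper's Definition~\ref{def: rel hyp} requires a finite adapted generating set for $(H,\mathbb{O})$); finiteness of the conjugacy classes of infinite intersections $H\cap gPg^{-1}$ is itself a theorem (cf.\ \cite[Thm.~9.1]{Hru10}), and your one-line appeal to ``local finiteness at fixed depth'' does not establish it, nor does it address finite generation of the $O_i$. Second, the lower bound making $\check{\iota}$ a quasi-isometric embedding --- which you correctly identify as the heart of the matter --- is exactly what is missing: (QC-2) constrains only the portion of an $X$--geodesic lying in $\Gamma(G,A)$, so ruling out long excursions through horoballs over cosets $gP$ with $H\cap gPg^{-1}$ finite or trivial, and matching the remaining excursions with excursions into $H$--horoballs (via statements like \cite[Prop.~9.4]{Hru10} relating points of $H$ near $gP$ to cosets of $H\cap gPg^{-1}$), requires a quantitative argument that the proposal defers entirely. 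Third, quasiconvexity of $\check{\iota}\bigl(X(H,\mathbb{O},B)\bigr)$ must be verified for geodesics between \emph{arbitrary} image points, including points deep inside image horoballs, whereas you only invoke (QC-2) for geodesics between points of $Hx$; this reduction is routine but is an additional step that must be carried out. As written, the text is a plan for a proof rather than a proof.
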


%\begin{remark}[Non--finitely generated groups]
%As indicated by the second author in \cite{Hru10}, it is often desirable to drop the finitely generated hypothesis from the definitions of relative hyperbolicity and relative quasiconvexity.
%The warped product metric horoball construction described above, while convenient, is not suited to such generalizations.
%In contrast the combinatorial horoball construction of Groves--Manning \cite{GM08} extends naturally to the non--finitely generated case (see \cite{Hru10}).
%All uses of metric horoballs in this article can be replaced with Groves--Manning's combinatorial horoballs with little change in the overall arguments.
%The result of such a change extends the results in Section~\ref{sec: trees separate} to cover splittings over relatively quasiconvex subgroups that are not finitely generated, as discussed in \cite{Hru10} (see also \cite{Dahm03,Osin06}).
%We note that this generalization is not needed for the main results of this article.
%We have chosen to work with metric horoballs instead because their convenient product structure slightly simplifies the proof of Proposition~\ref{prop: rel qc} in inessential ways.
%\end{remark}

The \emph{limit set} $\Lambda H$ of a subgroup $H \le G$ is defined to be the set of limit points of any $H$--orbit in the Bowditch boundary $\boundary(G,\bbp)$.

The following result on intersections of relatively quasiconvex subgroups holds for all countable $G$ and all countable subgroups $H$ and $K$, regardless of finite generation.
In order to be consistent with the finitely generated definitions given above, slight restrictions would be required.  In this article we only require the finitely generated cases of the proposition.

\begin{proposition}
\label{prop:IntersectionQC}
Let $(G,\bbp)$ be relatively hyperbolic.
Suppose $H$ and $K$ are relatively quasiconvex subgroups.
Then $H \cap K$ is also relatively quasiconvex.
Furthermore the intersection $\Lambda H \cap \Lambda K$ is equal to the union of $\Lambda(H\cap K)$ together with any parabolic points contained in both $\Lambda H$ and $\Lambda K$.
\end{proposition}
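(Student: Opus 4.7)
The plan is to verify both conclusions using the (QC-2) characterization of relative quasiconvexity (Definition~\ref{def: Hruska rel qc}) together with a pigeonhole argument in the word metric; Theorem~\ref{thm: ManningMartinez} lets us switch between the two definitions freely.

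For Part~(1), fix a finite adapted generating set $A$, basepoint $x = 1 \in \Gamma$, and a common (QC-2) constant $\mu$ that serves both $H$ and $K$. Given $g_0, g_1 \in H \cap K$, let $c$ be a geodesic in the cusped space $X$ between them. Two applications of (QC-2) force every vertex $p$ of $c \cap \Gamma$ to lie within word-distance $\mu$ of both $H$ and $K$. Choose witnesses $h_p \in H$ and $k_p \in K$, and set $f_p := h_p^{-1} k_p$. The offsets $f_p$ all lie in the finite set $F := \set{g \in G}{|g|_A \le 2\mu}$.

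The crucial algebraic identity is
\[
   H \cap Kf^{-1} \;=\; (H \cap K)\,u_f,
\]
valid whenever the left-hand side is nonempty and $u_f$ is any fixed element of it. A direct coset computation confirms this: writing $u_f = k_0 f^{-1}$ with $k_0 \in K$, any $h = kf^{-1} \in H \cap Kf^{-1}$ satisfies $h u_f^{-1} = k k_0^{-1} \in H \cap K$, and the reverse inclusion is equally easy. Choose representatives $u_f$ for the finitely many $f \in F$ with $H \cap Kf^{-1}$ nonempty, and set $\nu := \max_f |u_f|_A$. Then $h_p = g_p u_{f_p}$ with $g_p \in H \cap K$, giving $d_A(p, g_p) \le \mu + \nu$. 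This verifies (QC-2) for $H \cap K$ with constant $\mu + \nu$.

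For Part~(2), the inclusion of $\Lambda(H \cap K)$ together with common parabolic fixed points into $\Lambda H \cap \Lambda K$ is immediate. For the reverse inclusion, suppose $\xi \in \Lambda H \cap \Lambda K$ is not parabolic. Then a geodesic ray $\gamma$ in $X$ from $1$ to $\xi$ must return to $\Gamma$ infinitely often, since a ray trapped arbitrarily deep in a single horoball would converge to its parabolic apex. Apply the pigeonhole mechanism of Part~(1) along vertices $p_n \in \gamma \cap \Gamma$ with $p_n \to \xi$ to produce $h_n \in H$ and $k_n \in K$ with $f_n := h_n^{-1} k_n \in F$, and note that hyperbolicity of $X$ forces $h_n, k_n \to \xi$. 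Extracting a subsequence with $f_n = f$ constant, the identity above gives $g_n := h_n u_f^{-1} \in H \cap K$; since $G$ acts isometrically on $X$, $d_X(g_n, h_n) = |u_f|_X$ is bounded, so $g_n \to \xi$ and $\xi \in \Lambda(H \cap K)$.

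The main subtlety, and the point that makes this argument succeed where a naive ``intersect the tubular neighborhoods'' attempt would fail, is the reduction to a right-coset structure for $H \cap K$ itself rather than for some twisted conjugate $H \cap fKf^{-1}$; the identity $H \cap Kf^{-1} = (H \cap K) u_f$ is what keeps all estimates in a single fixed word metric and avoids the circular inductive setup that a direct conjugate-based approach would demand.
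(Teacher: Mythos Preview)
The paper does not prove this proposition; immediately after the statement it simply cites external references (Dahmani, Osin, Mart\'{i}nez, Hruska for the relative quasiconvexity of $H\cap K$; Dahmani and Yang for the limit set assertion). So your attempt goes beyond what the paper itself does.

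Your argument for Part~(1) is correct and is essentially Short's classical pigeonhole argument for quasiconvex subgroups, transported to the relative setting via the (QC-2) characterization; this is precisely the proof appearing in \cite{Hru10}. The coset identity $H\cap Kf^{-1}=(H\cap K)u_f$ is handled cleanly.

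Part~(2) has a genuine gap. You write ``apply the pigeonhole mechanism of Part~(1) along vertices $p_n\in\gamma\cap\Gamma$,'' but that mechanism rests on (QC-2), which is stated only for geodesics in $X$ joining two points of $H$ (respectively $K$). The ray $\gamma$ has an ideal endpoint, so (QC-2) does not directly give you $h_n\in H$ and $k_n\in K$ within bounded \emph{word} distance of $p_n$. The natural repair---approximate $\gamma$ by segments $c_m=[1,h_m]$ with $h_m\to\xi$ and use thinness of triangles---only yields points of $c_m$ that are close to $p_n$ in the cusped-space metric $d_X$. This is not enough: two points of $\Gamma$ lying in the same peripheral coset can be $d_X$--close (via the horoball shortcut) while being arbitrarily far apart in $d_A$, and nothing you have written rules this out. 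Closing the gap requires a bounded--horoball--penetration comparison showing that if two geodesics in $X$ with endpoints in $\Gamma\cup\partial X$ fellow-travel in $d_X$, then their intersections with $\Gamma$ also fellow-travel in $d_A$. This is true, but it is a nontrivial input (of the sort developed in \cite{Hru10} and related work), not a consequence of hyperbolicity alone.
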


The fact that the intersection is relatively quasiconvex is proved in various levels of generality by Dahmani, Osin, Mart\'{i}nez, and the second author \cite{Dahm03,Osin06,Martinez09_Combination,Hru10}.
Dahmani proves the conclusion regarding intersections of limit sets in \cite[Prop.~1.10]{Dahm03} in the setting of fully quasiconvex subgroups.  Wenyuan Yang \cite{Yang_LimitSets} establishes the general case.

%%%%%%%%%%%%%%%%%%%%%%%%%%%%%%%%%%%%%%%%%%%%%%%%%%%%
\section{Bowditch boundaries with no cut points}
\label{sec:NoCutPoint}
%%%%%%%%%%%%%%%%%%%%%%%%%%%%%%%%%%%%%%%%%%%%%%%%%%%%

Bowditch's construction of the JSJ tree for a one-ended hyperbolic group $G$ over the family of $2$--ended subgroups in \cite{Bow98_JSJ} depends heavily on a classification of the local cut points of the Gromov boundary $\boundary G$.  Bowditch proves that the multivalent points are partitioned into disjoint multivalent exact cut pairs, each bivalent point is contained in either an isolated exact cut pair or a necklace, and $\boundary G$ cannot contain apeirovalent points.

As suggested by Bowditch \cite{Bow01_Peripheral} many results in \cite{Bow98_JSJ} have natural extensions to relatively hyperbolic groups whose boundary is connected and without global cut points, provided that one modifies them to take into account the existence of parabolic points in the boundary.
Recall that by Theorem~\ref{thm:BoundaryConnectedness}, the boundary of such a group is a Peano continuum without cut points.
Guralnik and the first author \cite{Gur,Haulmark_LocalCutPt} have generalized some results of \cite{Bow98_JSJ} in this manner.
The main goal of this section is to advance this generalization by proving Proposition~\ref{prop:Bivalent}, which classifies the bivalent points of the Bowditch boundary, and Proposition~\ref{prop: only finitely many}, which establishes that the inseparable exact cut pairs lie in only finitely many orbits.

Throughout this section, $(G,\mathbb{P})$ denotes a relatively hyperbolic group with boundary $M=\boundary(G,\mathbb{P})$ such that $M$ is connected and has no global cut point.

Any nonparabolic $2$--ended subgroup of $G$ fixes a unique pair of points $\{\zeta,\xi\}$ in $M$, and acts cocompactly on $M \setminus \{\zeta,\xi\}$ (see, for example \cite[Thm.~2I]{Tuk94}).
Using this fact, the next lemma follows by the same proof as \cite[Lem.~5.6]{Bow98_JSJ}.

\begin{lemma}[Loxodromic $\Longrightarrow$ exact]
\label{lem: 2ended are exact}
Assume $M=\boundary(G,\mathbb{P})$ is connected and has no cut points. If a cut pair $\{\zeta,\xi\}$ is the limit set of a nonparabolic $2$--ended subgroup, then it is exact. \qed
\end{lemma}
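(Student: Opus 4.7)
The plan is to show the equalities $\val(\zeta) = \val(\xi) = n$, where $n$ denotes the finite number of components of $M \setminus \{\zeta,\xi\}$; exactness then follows by definition. Let $C_1, \ldots, C_n$ be these components. By the general theory of cut pairs in Peano continua recalled in Section~\ref{subsec:CutPoints}, $n$ is finite and both $\zeta,\xi$ lie in $\overline{C_i}$ for every $i$; since the $C_j$ are disjoint open pieces of $M \setminus \{\zeta,\xi\}$, one also has $\overline{C_i} \setminus C_i \subseteq \{\zeta,\xi\}$.

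For the lower bound $\val(\zeta) \geq n$, I would fix any connected open neighborhood $U$ of $\zeta$ avoiding $\xi$, which exists by local connectedness (Theorem~\ref{thm:BoundaryConnectedness}). Then $U \setminus \{\zeta\}$ is contained in $M \setminus \{\zeta,\xi\}$ and decomposes as the disjoint union of the nonempty open sets $U \cap C_i$. Any connected component of $U \cap C_i$ must accumulate at $\zeta$, since otherwise it would be clopen in $U$, contradicting the connectedness of $U$. Hence each $C_i$ contributes at least one end of $M \setminus \{\zeta\}$ at $\zeta$.

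The main work lies in the matching upper bound $\val(\zeta) \leq n$. The key input is the sentence directly preceding the lemma: $H$ acts cocompactly on $M \setminus \{\zeta,\xi\}$ by \cite[Thm.~2I]{Tuk94}, and as a convergence subgroup with limit set $\{\zeta,\xi\}$ it acts properly discontinuously there. Since $H$ permutes the finite collection $\{C_1,\ldots,C_n\}$, some finite-index subgroup $H_0 \leq H$ stabilizes each $C_i$ individually and still acts properly and cocompactly on it. Because $H_0$ is virtually $\mathbb{Z}$, a Freudenthal-type argument for proper cocompact actions on connected, locally compact, locally connected Hausdorff spaces implies that $C_i$ has exactly two topological ends. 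As the only accumulation points of $C_i$ outside itself are $\zeta$ and $\xi$, and each of the two is genuinely accumulated, these two ends correspond bijectively to $\{\zeta,\xi\}$. Thus $C_i$ contributes exactly one end at $\zeta$, yielding $\val(\zeta) \leq n$. The symmetric argument gives $\val(\xi) = n$, proving exactness.

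The delicate point will be the ends calculation for $C_i$: since $C_i$ is not a manifold but merely an open subset of a Peano continuum, one must either appeal to a sufficiently general Freudenthal-type statement for cocompact proper actions on such spaces, or else transport the question into the cusped space $X(G,\bbp,A)$, where $H$ preserves a bi-infinite quasi-geodesic axis from $\zeta$ to $\xi$, acts cocompactly on a tubular neighborhood of that axis, and the ends of the complement correspond under the Gromov boundary extension to the $C_i$.
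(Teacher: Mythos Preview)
Your proposal is correct and follows essentially the same route as the paper, which simply defers to \cite[Lem.~5.6]{Bow98_JSJ}: Bowditch's argument there is precisely the one you outline---use the proper cocompact action of the virtually cyclic stabilizer on $M\setminus\{\zeta,\xi\}$ to see that each component $C_i$ is two-ended with one end at each of $\zeta$ and $\xi$, hence $\val(\zeta)=\val(\xi)=n$. The Freudenthal-type statement you flag as delicate is exactly the ingredient Bowditch invokes (and it holds in the generality you need, since each $C_i$ is open in the Peano continuum $M$ and hence connected, locally compact, locally connected, and Hausdorff); your separate lower-bound paragraph is in fact subsumed by this ends computation once you take compact exhaustions of $M\setminus\{\zeta\}$ containing $\xi$.
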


Combining a result of Guralnik \cite[Prop.~4.7]{Gur} with Lemma~\ref{lemma: cut not separated by M(3+)}, gives the following result.

\begin{proposition}[Multivalent $\Longrightarrow$ loxodromic]
\label{prop:M3Pairs}
Assume $M=\relbndry$ is connected with no cut points.  The multivalent points of $M$ are partitioned into disjoint inseparable exact cut pairs.  Each such pair is the limit set of a nonparabolic $2$--ended group.
In particular, the set of multivalent points of $M$ is countable. \qedhere
\end{proposition}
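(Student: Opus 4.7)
The plan is to combine Guralnik's Proposition~4.7 with the machinery already assembled in Section~\ref{subsec:CutPoints}. Starting from a multivalent point $\zeta \in M$, Guralnik's result produces a partner point $\xi$ such that $\{\zeta,\xi\}$ is a cut pair equal to the limit set of some nonparabolic $2$--ended subgroup $H\le G$. Lemma~\ref{lem: 2ended are exact} then upgrades this cut pair to an exact one, and because exactness forces $\val(\zeta)=\val(\xi)$ to equal the number of complementary components, the multivalence of $\zeta$ automatically makes $\{\zeta,\xi\}$ a multivalent exact cut pair. Part~(\ref{item:MultivalentInseparable}) of Lemma~\ref{lemma: cut not separated by M(3+)} then provides inseparability at no extra cost.

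To see that these pairs partition the set $M(3+)$ of multivalent points, I would appeal to the disjointness of multivalent exact cut pairs, stated in Section~\ref{subsec:CutPoints} as a consequence of \cite[Lem.~3.8]{Bow98_JSJ}. That fact guarantees that the multivalent exact cut pair $\{\zeta,\xi\}$ containing $\zeta$ is unique, so the collection of all such pairs genuinely partitions $M(3+)$ rather than merely covering it.

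For the countability statement my plan is to use the assignment $\{\zeta,\xi\} \leftrightarrow H$ provided by the first step: each multivalent exact cut pair arises as the limit set of a nonparabolic $2$--ended subgroup, and distinct pairs necessarily have distinct limit sets. Since $G$ is countable, it contains only countably many $2$--ended subgroups, hence only countably many such limit pairs, and therefore only countably many multivalent points.

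The main obstacle I anticipate is verifying that the conclusion of Guralnik's Proposition~4.7 actually applies in our setting, namely a connected, cut-point-free Bowditch boundary that may contain parabolic points: one must confirm that his hypotheses are met whenever $\zeta \in M(3+)$ and that the partner $\xi$ he extracts is genuinely the second point of a cut pair (not merely a locally separating companion). Once that input is in hand, the remainder of the argument is a fairly routine assembly of Lemma~\ref{lem: 2ended are exact}, Lemma~\ref{lemma: cut not separated by M(3+)}, and the cited disjointness result from \cite{Bow98_JSJ}.
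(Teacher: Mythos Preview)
Your proposal is correct and follows essentially the same approach as the paper: the paper's entire proof is the single sentence ``Combining a result of Guralnik \cite[Prop.~4.7]{Gur} with Lemma~\ref{lemma: cut not separated by M(3+)}, gives the following result,'' and your outline simply unpacks these two ingredients (together with Lemma~\ref{lem: 2ended are exact} and the disjointness fact from \cite[Lem.~3.8]{Bow98_JSJ}) in the expected way.
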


The next result classifying the apeirovalent points of $M$ was observed by Guralnik \cite[Prop.4.2]{Gur}.

\begin{proposition}[Apeirovalent $\Longrightarrow$ parabolic]
\label{prop:ApeirovalentParabolic}
Suppose $M=\relbndry$ is connected with no cut points.  Then every apeirovalent point of $M$ is parabolic.  In particular, the set of apeirovalent points of $M$ is countable.
\end{proposition}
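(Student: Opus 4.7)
The plan is to prove the contrapositive: every non-parabolic point of $M$ has finite valence. Since $(G,\mathbb{P})$ is relatively hyperbolic, the action of $G$ on $M = \boundary(G,\mathbb{P})$ is a geometrically finite convergence action, so every non-parabolic boundary point is a conical limit point. It therefore suffices to show that each conical limit point has finite valence. Fix a conical limit point $\zeta$, and choose an associated convergence sequence $g_n \in G$ together with distinct points $a,b \in M$ satisfying $g_n\zeta \to a$ and $g_n z \to b$ uniformly on compact subsets of $M \setminus \{\zeta\}$. Assume for contradiction that $\val(\zeta) = \infty$.

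The key step is to show that, under these hypotheses, $\zeta$ must be fixed by some loxodromic element $g \in G$. Once this is established, the axis endpoints of $g$ form the limit set $\{\zeta,\zeta'\}$ of $\gen{g}$, which is an exact cut pair by Lemma~\ref{lem: 2ended are exact}. Then $\val(\zeta)$ equals the finite number of components of $M \setminus \{\zeta,\zeta'\}$, contradicting apeirovalence.

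To produce the required loxodromic, the strategy (following Guralnik \cite[Prop.~4.2]{Gur}) is dynamical. Apeirovalence yields arbitrarily small connected neighborhoods $U$ of $\zeta$ whose complements $M \setminus U$ decompose into a large number $N$ of components $C_1,\ldots,C_N$. Since each $g_n$ is a homeomorphism, $M \setminus g_n U$ has $N$ components $g_nC_1,\ldots,g_nC_N$, and uniform convergence forces these to cluster inside any preassigned neighborhood $W$ of $b$ for all sufficiently large $n$. Combining this clustering with the local connectedness of $M$ at $b$, the no--cut--point hypothesis, and cocompactness of the $G$--action on the space of distinct triples in $M$, one extracts a non-torsion element of $G_\zeta$; this element cannot be parabolic (since $\zeta$ is conical, not parabolic), so it must be loxodromic.

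The main obstacle is the precise pigeonhole and topological argument that turns the clustering of infinitely many branches near $b$ into a loxodromic stabilizer at $\zeta$, which requires a delicate interplay of local connectedness, no--cut--point, and convergence dynamics. Once the loxodromic element is produced, countability of the apeirovalent set follows immediately: the set of parabolic points is a countable union of finite $G$--orbits, hence countable.
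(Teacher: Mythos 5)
The paper does not actually prove this proposition; it is quoted from Guralnik \cite[Prop.~4.2]{Gur}, so a blind proof has to either reproduce that argument or reduce the statement to something citable. Your outline follows the expected route (non-parabolic points of a geometrically finite convergence action are conical; a conical local cut point should be the fixed point of a loxodromic; loxodromic fixed points have finite valence), but the decisive step is not carried out. All of the content of the proposition is concentrated in the claim that the clustering of the translated components $g_nC_1,\dots,g_nC_N$ near $b$ lets one ``extract a non-torsion element of $G_\zeta$,'' and you explicitly defer this as ``the main obstacle'' requiring ``a delicate interplay.'' Cocompactness on distinct triples only produces group elements rearranging pieces of $M$; showing that some element (say of the form $g_m^{-1}g_n$ or a product built from the convergence sequence) actually \emph{stabilizes} $\zeta$, is of infinite order, and is loxodromic is precisely the technical heart of Guralnik's Proposition~4.2 and of Bowditch's analysis of local cut points in \cite{Bow98_JSJ}, and it is missing here. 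As written this is a plan for a proof, not a proof, so there is a genuine gap.

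Two secondary points. First, even granting a loxodromic $g\in\Stab(\zeta)$ with fixed pair $\{\zeta,\zeta'\}$, your appeal to Lemma~\ref{lem: 2ended are exact} presupposes that $\{\zeta,\zeta'\}$ is a cut pair, which you have not shown; alternatively one can bound $\val(\zeta)$ directly using the cocompact action of $\gen{g}$ on $M\setminus\{\zeta,\zeta'\}$ (\cite[Thm.~2I]{Tuk94}), local connectedness, and the fact that a two-point set in a Peano continuum without cut points has only finitely many complementary components (\cite[Cor.~3.4]{Gur}), but some such argument must be supplied. Second, a minor slip in the last line: $G$--orbits of parabolic points are infinite (though countable); countability of the parabolic set follows from the countability of $G$ and the finitely many orbits of parabolic points, not from finiteness of the orbits. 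That conclusion is unaffected, but the main gap above is not.
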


In the following three results we study the bivalent points of $\bndry(G,\bbp)$. 

\begin{lemma}
\label{lem:parabolics are not isolated}
Suppose $M=\relbndry$ is connected with no cut points.
Let $N$ be a $\sim$--class of $M$ containing more than one point.
If $\eta \in N$ is parabolic then $N$ is infinite and $\eta$ is an accumulation point of $N$.
In particular, for each point $\xi\in N \setminus\{\eta\}$, the orbit of $\xi$ under the stabilizer of $N$ is infinite and has $\eta$ as a limit point.
\end{lemma}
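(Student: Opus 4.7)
The plan is to exploit the parabolic dynamics at $\eta$. Since $\eta$ is a parabolic point, its stabilizer $P_\eta$ in $G$ is infinite and is conjugate to some peripheral subgroup, and by the standard convergence group / geometric finiteness characterization of relative hyperbolicity (Bowditch, Yaman, Tukia), $\eta$ is the unique fixed point of $P_\eta$ in $M$, with $P_\eta$ acting as a ``bounded parabolic'' group on $M$.

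First I would show that $P_\eta \cdot \xi \subseteq N$ for any $\xi \in N \setminus \{\eta\}$. Since $G$ acts on $M$ by homeomorphisms, the topological property of being a bivalent exact cut pair is $G$--equivariant. Thus for each $g \in P_\eta$, since $\{\eta,\xi\}$ is a bivalent exact cut pair and $g\eta = \eta$, the pair $\{g\eta, g\xi\} = \{\eta, g\xi\}$ is also a bivalent exact cut pair. Hence $g\xi \sim \eta$, so $g\xi \in N$. Equivariance of $\sim$ also yields $gN = N$ for each $g \in P_\eta$, so $P_\eta \subseteq \Stab(N)$.

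Next I would show that $P_\eta \cdot \xi$ is infinite. The group $P_\eta \cap \Stab(\xi)$ fixes two distinct points $\eta$ and $\xi$ of $M$. Since peripheral subgroups of a relatively hyperbolic group contain no loxodromic elements (every infinite-order element of $P_\eta$ is parabolic with unique fixed point $\eta$), and since in a convergence group any subgroup fixing two distinct points is virtually cyclic with those two points as its endpoints, the intersection $P_\eta \cap \Stab(\xi)$ must be finite. Because $P_\eta$ is infinite, the orbit $P_\eta \cdot \xi$ is infinite, and therefore $N$ is infinite.

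Finally, to see that $\eta$ is an accumulation point of $P_\eta \cdot \xi$, I would invoke the standard dynamics of parabolic subgroups in a geometrically finite convergence group action: for any sequence of distinct elements $g_n \in P_\eta$, a subsequence is a convergence sequence whose attracting and repelling limits both coincide with $\eta$ (the unique fixed point of $P_\eta$), so that $g_n\xi \to \eta$ for every $\xi \neq \eta$. This yields an infinite subset of $N$ with $\eta$ in its closure, and, combined with $P_\eta \subseteq \Stab(N)$, gives the final assertion about the $\Stab(N)$--orbit of $\xi$. The only delicate step is the appeal to parabolic convergence dynamics, but this is a standard feature of the convergence group action of $(G,\mathbb{P})$ on its Bowditch boundary and requires no novel argument.
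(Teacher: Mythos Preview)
Your proposal is correct and follows essentially the same approach as the paper's proof: both arguments use that the parabolic stabilizer $P_\eta$ preserves $N$ by equivariance of $\sim$, and then invoke the dynamics of $P_\eta$ on $M$ to conclude that the orbit $P_\eta\cdot\xi$ is infinite and accumulates at $\eta$. The paper packages steps two and three into the single observation that $P_\eta$ acts properly and cocompactly on $M\setminus\{\eta\}$ (the bounded parabolic condition), whereas you unpack this into a finite-stabilizer argument plus an appeal to convergence dynamics, but the content is the same.
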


\begin{proof}
Suppose $\eta\in N$ is parabolic. Let $\xi\in N\setminus\{\eta\}$. By \cite{BowditchRelHyp} the infinite peripheral subgroup $P=\stab(\eta)$ acts properly and cocompactly on $M\setminus\{\eta\}$. Thus $\eta$ is a limit point of the orbit $P(\xi)$ in $M$.  Since $P$ acts on $M$ by homeomorphisms, the hypothesis $\xi \sim \eta$ implies $p(\xi) \sim \eta$ for every $p\in P$. In particular, $P(\xi)$ is an infinite subset of $N$ accumulating at $\eta$.
\end{proof}

In \cite{Haulmark_LocalCutPt}, a necklace is defined to be the closure of a set formed by intersecting a $\sim$--class with the set of all conical limit points.
The following lemma implies that the purely topological definition of necklace in Section~\ref{subsec:CutPoints} coincides with the dynamical definition of necklace given in \cite{Haulmark_LocalCutPt}.

\begin{lemma}
\label{lem: M vs M*}
Assume that $M=\relbndry$ is connected and without cut points. Every $\sim$--class $N$ with at least three points contains a dense set of conical limit points.
In particular the closure of $N$ is equal to the closure of its set of conical limit points.
\end{lemma}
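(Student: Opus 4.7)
The plan is to prove the density pointwise: for every $\eta \in N$, I will show that $\eta$ lies in the closure of the set of conical limit points that belong to $N$. By the geometrically finite action of $(G,\mathbb{P})$ on $M$, every point of $M$ is either a conical limit point or a bounded parabolic fixed point, so if $\eta$ is already conical the claim is immediate. Assume therefore that $\eta$ is parabolic with peripheral stabilizer $P$. By Lemma~\ref{lem:parabolics are not isolated}, for every $\xi \in N \setminus \{\eta\}$ the orbit $P\xi$ lies in $N$ and accumulates at $\eta$; since conicality is $G$-invariant, the entire problem reduces to producing a single conical point $\xi_0 \in N$, because then $P\xi_0$ lies in $N$, consists of conical limit points, and accumulates at $\eta$. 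The ``in particular'' clause follows formally from the pointwise density together with the fact that $N$ is dense in its closure.

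The crux is therefore the sub-claim that \emph{every $\sim$-class $N$ with $|N| \geq 3$ contains at least one conical limit point}. I would argue this by contradiction, supposing every point of $N$ is parabolic, so that $N$ is a countable set of parabolic fixed points. First I would verify that $\overline{N}$ is a non-degenerate perfect compact metrizable space: perfectness at each (parabolic) point of $N$ follows from Lemma~\ref{lem:parabolics are not isolated}, while each point of $\overline{N} \setminus N$ is by definition an accumulation point of $N$. A non-degenerate perfect compact metric space is uncountable, and since the parabolic points of $M$ form a countable set (there are finitely many peripheral conjugacy classes and each contributes a countable orbit), $\overline{N}$ must contain uncountably many conical limit points, all of which---under our standing assumption---lie in $\overline{N} \setminus N$.

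The main obstacle is to promote one of these conical points of $\overline{N}$ to a point of $N$ itself. My approach combines the cyclic separating structure of $\overline{N}$ from \cite[Lem.~3.2]{Bow98_JSJ} with the dynamics of $P$ on $M \setminus \{\eta\}$. Choose $\xi_0,\xi_0' \in N \setminus \{\eta\}$ lying on opposite ``sides'' of $\eta$ in the cyclic order on the finite set $\{\eta,\xi_0,\xi_0'\} \subseteq \overline{N}$. Because any two distinct points of $N$ form a bivalent exact cut pair, each pair $\{p\xi_0,p\xi_0'\}$ with $p \in P$ is itself a bivalent exact cut pair of $M$ (the relation $\sim$ is $G$-invariant). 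As $p$ escapes compact subsets of $P$, both $p\xi_0$ and $p\xi_0'$ converge to $\eta$, and the cyclic-separating structure pins the component of $M \setminus \{p\xi_0,p\xi_0'\}$ containing $\eta$ into a shrinking neighborhood of $\eta$. A conical point of $\overline{N}$ chosen sufficiently close to $\eta$ is therefore trapped between two $\sim$-equivalent points of $N$ by one such exact cut pair. A local analysis in the Peano continuum $M$, using the exactness of the bounding cut pair together with the cyclic-separating structure of $\overline{N}$, should then force this conical point to be $\sim$-equivalent to the boundary points, placing it in $N$---contradicting the assumption that $N$ contains no conical limit points, and thereby establishing the sub-claim.
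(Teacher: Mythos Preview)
Your reduction is exactly right and matches the paper: once one shows that $N$ contains at least one conical limit point, Lemma~\ref{lem:parabolics are not isolated} immediately propagates conical points to a dense subset of $N$, since the $P$--orbit of any conical point is conical and accumulates at every parabolic point of $N$. Your perfectness argument for $\overline{N}$ is also correct and is used in the paper.

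The gap is in your final paragraph. You have produced uncountably many conical limit points in $\overline{N}$, all lying in $\overline{N}\setminus N$ by hypothesis, and you want to promote one of them to $N$. Your proposed mechanism---trapping a conical point $\zeta$ between a nearby exact bivalent cut pair $\{p\xi_0,p\xi_0'\}$ and then using ``local analysis'' to force $\zeta \sim p\xi_0$---does not work as stated. Being separated from the rest of $\overline{N}$ by a bivalent exact cut pair tells you nothing about the valence of $\zeta$ itself; there is no reason the trapped point should be bivalent, and membership in $N$ requires exactly that. Your argument would need to rule out the possibility that $\zeta$ has valence at least three, and the cyclic-separating structure alone does not do this.

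The paper avoids this difficulty entirely by invoking a structural fact you did not use: by \cite[Lem.~3.7]{Bow98_JSJ}, every point of $\overline{N}\setminus N$ is multivalent or apeirovalent. Propositions~\ref{prop:M3Pairs} and~\ref{prop:ApeirovalentParabolic} then say that $M$ has only countably many such points. Hence $\overline{N}\setminus N$ is countable, and since $N$ is countable by assumption, $\overline{N}$ is countable---contradicting the perfectness you already established. This replaces your entire last paragraph with a one-line cardinality contradiction, and it is the missing idea in your argument.
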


\begin{proof}
Suppose there exists a $\sim$--class $N$ containing only parabolic points.  By \cite[Lem.~3.7]{Bow98_JSJ} for any necklace $\overline{N}$ in a Peano continuum with no cut points,
the set $\overline{N}\setminus N$ contains only multivalent and apeirovalent points. 
By Propositions \ref{prop:M3Pairs} and~\ref{prop:ApeirovalentParabolic} there are only countably many such points in $M$.  Since $N$ is also countable, it follows that $\overline{N}$ is countable.
On the other hand, Lemma~\ref{lem:parabolics are not isolated} implies that $\overline{N}$ is a nonempty, compact metric space with no isolated points.  In particular, $\overline{N}$ has the cardinality of the continuum (see \cite[Cor.~6.3]{Kechris95}),
%(see \cite[Prob.~4.5.5]{Engelking}),
a contradiction.

By the previous paragraph, a $\sim$--class $N$ must contain at least one conical limit point.  It follows from Lemma~\ref{lem:parabolics are not isolated} that the conical limit points of $N$ are dense in $N$.
\end{proof}

The first author classifies the bivalent conical limit points in \cite{Haulmark_LocalCutPt}.  In the following proposition, we extend this classification to cover all possible bivalent points. This proposition also extends a related result of Guralnik \cite[Prop.~4.8]{Gur} by giving more detailed information about necklaces.

\begin{proposition}[Bivalent points]
\label{prop:Bivalent}
Suppose $M=\relbndry$ is connected with no cut points and not homeomorphic to a circle.
If $\xi\in M$ is bivalent, then exactly one of the following holds:
   \begin{enumerate}
   \item $[\xi]_\sim$ is a single parabolic point.
   \item $[\xi]_\sim$ is an inseparable cut pair and is the limit set of a nonparabolic $2$--ended subgroup of $G$.
   \item
   \label{item:Bivalent:Necklace}
   The closure $\overline{N}$ of $N=[\xi]_\sim$ is a cyclically separating Cantor set of $M$, the stabilizer $H$ of $\overline{N}$ is a relatively quasiconvex subgroup with limit set $\overline{N}$, and each jump of $\overline{N}$ is an inseparable exact cut pair stabilized by a nonparabolic $2$--ended subgroup of $H$.
   Furthermore each necklace $\overline{N}$ contains only finitely many $\Stab(\overline{N})$--orbits of jumps.
   \end{enumerate}
\end{proposition}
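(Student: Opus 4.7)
The plan is to organize the proof into three cases based on the cardinality of the $\sim$--class $N = [\xi]_\sim$. Conclusions (1), (2), and (3) correspond respectively to $|N| = 1$, $|N| = 2$, and $|N| \geq 3$, which are clearly mutually exclusive and exhaustive.

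When $|N| = 1$, the point $\xi$ must be parabolic. Since every boundary point is either conical or parabolic, suppose for contradiction that $\xi$ is conical. Then the first author's classification of bivalent conical limit points from \cite{Haulmark_LocalCutPt} places $\xi$ in a bivalent exact cut pair or in a necklace, either of which contradicts $|N| = 1$. When $|N| = 2$, write $N = \{\xi, \eta\}$; this is by definition a bivalent exact cut pair equal to its own $\sim$--class. Lemma~\ref{lem:parabolics are not isolated} forbids either point from being parabolic (otherwise $N$ would be infinite), so both are conical. The convergence group dynamics of $(G, \mathbb{P})$ on $M$ then imply that the setwise stabilizer of $\{\xi, \eta\}$ acts cocompactly on $M \setminus \{\xi, \eta\}$, and is therefore a nonparabolic $2$--ended subgroup with limit set $\{\xi, \eta\}$. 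To prove inseparability, I would rule out two subcases: by Lemma~\ref{lemma: cut not separated by M(3+)}(\ref{item:CutNotSepbyMulti}) the pair $\{\xi, \eta\}$ cannot be separated by a multivalent exact cut pair, and if a bivalent exact cut pair separated $\xi$ from $\eta$ then the cyclic structure of $\sim$--classes would force $\xi$ and $\eta$ into a common necklace, contradicting $|N| = 2$.

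When $|N| \geq 3$, I would first verify that $\overline{N}$ is a Cantor set: cyclic separation is \cite[Lem.~3.2]{Bow98_JSJ} (recalled earlier); compactness is automatic; Lemma~\ref{lem:parabolics are not isolated} combined with the density of conical limit points in $N$ from Lemma~\ref{lem: M vs M*} shows $\overline{N}$ has no isolated points; and total disconnectedness follows from cyclic separation, since for any distinct $a, b \in \overline{N}$ one can produce a cut pair $\{c, d\} \subset N$ separating $a$ from $b$ in $M$, whose two complementary components have closures yielding a clopen partition of $\overline{N}$ separating $a$ from $b$. For the jumps: each jump is by definition a pair in $\overline{N}$, and cyclic separation forces it to be an exact cut pair of $M$; a parabolic jump endpoint is ruled out using Lemma~\ref{lem:parabolics are not isolated} together with the structure of $\overline{N} \setminus N$ from \cite[Lem.~3.7]{Bow98_JSJ}. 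Once both endpoints are known to be conical, the argument of case (2) shows each jump is stabilized by a nonparabolic $2$--ended subgroup, and inseparability of jumps follows by the same reasoning.

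It remains to analyze $H = \Stab(\overline{N})$, which I expect to be the main obstacle. The equality $\Lambda H = \overline{N}$ should follow from the density of jumps in $\overline{N}$ together with the fact that $H$ contains the $2$--ended stabilizer of each jump. For relative quasiconvexity, the natural peripheral structure on $H$ comes from the parabolic points of $M$ lying in $\overline{N}$; Proposition~\ref{prop:IntersectionQC} identifies these as intersections of $H$ with peripheral subgroups of $G$, and condition (QC-2) should follow from a cocompact action of $H$ on the cusped convex hull of $\overline{N}$ inside $X(G, \mathbb{P}, A)$. Finiteness of $H$--orbits of jumps is then a corollary of this cocompactness, since jumps correspond to certain complementary components of $\overline{N}$ in $M$. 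The technical challenge, relative to Bowditch's hyperbolic setting in \cite{Bow98_JSJ}, is that one must work inside the cusped space and carefully track parabolic points of $\overline{N}$ in order to identify the correct peripheral structure on $H$.
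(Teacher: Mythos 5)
Your trichotomy by the cardinality of $N=[\xi]_\sim$ is exactly the paper's organization, and your case $|N|=1$ matches the paper (both rest on the classification of bivalent conical limit points in \cite{Haulmark_LocalCutPt}). However, your case $|N|=2$ contains a genuine gap: you assert that ``the convergence group dynamics \dots imply that the setwise stabilizer of $\{\xi,\eta\}$ acts cocompactly on $M\setminus\{\xi,\eta\}$.'' Convergence dynamics give no such thing for an arbitrary pair of conical limit points; a priori the stabilizer of the pair could be finite or even trivial. The result you are implicitly using (Tukia, \cite[Thm.~2I]{Tuk94}) runs in the opposite direction: \emph{if} the pair is already known to be the limit set of a nonparabolic $2$--ended subgroup, \emph{then} that subgroup acts cocompactly on the complement. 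Showing that an isolated bivalent exact cut pair is actually the limit set of a loxodromic element is the substantive dynamical content here; it is precisely what \cite[Thm.~4.20]{Haulmark_LocalCutPt} provides (generalizing Bowditch's argument in the hyperbolic case), and the paper's proof simply cites it. Your inseparability argument via crossing cut pairs lying in a common $\sim$--class is fine.

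For $|N|\ge 3$ your write-up is a plan rather than a proof: the statements that $\Lambda H=\overline{N}$, that $H$ is relatively quasiconvex, and that there are finitely many $H$--orbits of jumps are exactly the hard content of conclusion~(\ref{item:Bivalent:Necklace}), and you only assert that they ``should follow from a cocompact action of $H$ on the cusped convex hull of $\overline{N}$'' --- but establishing such a cocompact action is essentially equivalent to the quasiconvexity you are trying to prove. The paper obtains all of this from the proof of \cite[Lem.~4.19]{Haulmark_LocalCutPt} combined with Lemma~\ref{lem: 2ended are exact}, after using Lemma~\ref{lem: M vs M*} to reconcile the topological definition of necklace with the dynamical one used in \cite{Haulmark_LocalCutPt} (a point your sketch does not address, though you do use that lemma for perfectness). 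There is also a local flaw in your Cantor-set argument: total disconnectedness cannot be proved by finding a pair $\{c,d\}\subset N$ separating arbitrary $a,b\in\overline{N}$ in $M$, because when $\{a,b\}$ is a jump no pair of points of $\overline{N}$ separates them in $M$ by definition; separating jump endpoints inside the subspace $\overline{N}$ requires a different argument.
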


\begin{proof}
Suppose first that $\xi$ is a bivalent point and $[\xi]_\sim$ contains only one point.
By \cite[Thm.~4.20]{Haulmark_LocalCutPt} every bivalent conical limit point is involved in an exact cut pair, so $\xi$ must be parabolic.

Now suppose $[\xi]_\sim$ contains exactly two points.  By Lemma~\ref{lem:parabolics are not isolated} a finite $\sim$--class containing a parabolic point must be a singleton, so the two points of $[\xi]_\sim$ must be conical limit points.  However, as observed in \cite[Thm.~4.20]{Haulmark_LocalCutPt} every $\sim$--class containing exactly two conical limit points must be the limit set of a nonparabolic $2$--ended subgroup of $G$.

If $[\xi]_\sim$ contains at least three elements, then its closure is a necklace $\overline{N}$.
As mentioned above, \cite{Haulmark_LocalCutPt} uses a slightly different definition of necklace than used here, but Lemma~\ref{lem: M vs M*} establishes that the two definitions are equivalent.  Conclusion~(\ref{item:Bivalent:Necklace}) now follows from the proof of \cite[Lem.~4.19]{Haulmark_LocalCutPt} together with Lemma~\ref{lem: 2ended are exact}.
\end{proof}

The following proposition classifies all possible inseparable exact cut pairs in a connected Bowditch boundary with no cut points.

\begin{proposition}[Inseparable exact cut pairs]
\label{prop: only finitely many}
Suppose $M=\boundary(G,\mathbb{P})$ is connected and has no cut points. Then:
   \begin{enumerate}
   \item $M$ contains only finitely many $G$--orbits of inseparable exact cut pairs.
   \item Each inseparable exact cut pair has a non-parabolic $2$--ended stabilizer and is the limit set of that $2$--ended group.
   \item If two inseparable exact cut pairs intersect then they must coincide.
   \end{enumerate}
\end{proposition}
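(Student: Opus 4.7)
The plan is to handle (2) and (3) by a case analysis based on valence, using the classification of local cut points established earlier in this section, and then to treat the finiteness statement (1) separately, as this is the substantive assertion.

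For (2), let $C = \{\zeta,\xi\}$ be an inseparable exact cut pair, so both points share a common valence $n$ equal to the number of complementary components. If $n \geq 3$, Proposition~\ref{prop:M3Pairs} immediately gives that $C$ is the limit set of a nonparabolic $2$-ended subgroup. If $n = 2$, then $\zeta \sim \xi$, and by Proposition~\ref{prop:Bivalent} either $\#[\zeta]_\sim = 2$ (in which case the conclusion is immediate) or the closure of $[\zeta]_\sim$ is a necklace $\overline{N}$. In the necklace case, I would argue inseparability forces $C$ to be a jump: otherwise, density of $N$ in the Cantor set $\overline{N}$ combined with cyclic separation supplies a pair $\{c,d\} \subset N$ separating $C$ in $M$, and $\{c,d\}$ is itself a bivalent exact cut pair by the definition of $\sim$, contradicting inseparability. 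The jump clause of Proposition~\ref{prop:Bivalent}(\ref{item:Bivalent:Necklace}) then completes (2).

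For (3), suppose $C_1 = \{\zeta,\xi\}$ and $C_2 = \{\zeta,\tau\}$ are distinct inseparable exact cut pairs meeting at $\zeta$, with $\xi \neq \tau$. The two pairs are simultaneously multivalent or simultaneously bivalent, since $\zeta$ has a single valence. The multivalent case is ruled out by the disjointness built into Proposition~\ref{prop:M3Pairs}. In the bivalent case, transitivity of $\sim$ places $\zeta,\xi,\tau$ in a common $\sim$-class $N$. If $\#N = 2$ then $\xi = \tau$, a contradiction; if $\overline{N}$ is a necklace then $C_1$ and $C_2$ are two jumps sharing $\zeta$. However $\overline{N}$ embeds in $S^1$ as a Cantor set respecting its cyclic separation, and each point of such a Cantor set is the endpoint of at most one complementary gap (else it would be isolated), so it lies in at most one jump, again a contradiction.

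For (1), by (2) the inseparable exact cut pairs fall into three types: multivalent pairs, two-element bivalent $\sim$-classes, and jumps of necklaces. Since Proposition~\ref{prop:Bivalent}(\ref{item:Bivalent:Necklace}) already yields only finitely many jump orbits per necklace, the problem reduces to bounding the $G$-orbits of multivalent pairs, two-element bivalent classes, and necklaces. I plan to address each by a convergence-group accumulation argument: from an infinite pairwise-inequivalent family of representatives, subsequential limits in the compact boundary, combined with Propositions~\ref{prop:M3Pairs} and~\ref{prop:ApeirovalentParabolic}, should produce apeirovalent accumulation at non-parabolic points, contradicting the classification. The main obstacle is formalizing this accumulation argument in the relative setting while properly accounting for parabolic points; Bowditch's analogous argument for one-ended hyperbolic groups does not transfer directly, and one must avoid circular reliance on the JSJ tree construction developed later in the paper.
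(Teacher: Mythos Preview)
Your treatment of (2) is essentially the paper's, though you spell out the ``inseparable in a necklace implies jump'' step that the paper treats as immediate from the definitions. For (3) you take a different route: the paper simply invokes (2) and then Tukia's fact that two loxodromic elements in a convergence group have fixed-point sets that are either equal or disjoint, whereas you argue directly via the gap structure of a Cantor set in $S^1$. Both are valid; the paper's is shorter and avoids the separate case analysis. (You should also dispose of the case $M\cong S^1$ explicitly, since Proposition~\ref{prop:Bivalent} excludes it; the paper notes that a circle has no inseparable cut pairs.)

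Part (1) is where you have a genuine gap. Your plan---accumulate an infinite family of representatives in distinct orbits to force an apeirovalent non-parabolic point---is not a proof, and you correctly flag the obstacles yourself. There is no clear mechanism by which such a sequence must accumulate at a non-parabolic point, nor any reason the limit point would acquire infinite valence rather than simply being a conical limit point lying near many pairs without itself locally disconnecting anything. The paper avoids this entirely. It invokes Gerasimov's theorem that a geometrically finite convergence group acts cocompactly on the space of distinct ordered pairs in $M$, so that every two-point subset has a $G$-translate of diameter at least some fixed $\epsilon>0$. Then Bowditch's purely topological Lemmas~3.15 and~3.16 from \cite{Bow98_JSJ} say that any Peano continuum contains only finitely many isolated exact cut pairs and finitely many necklaces of diameter at least $\epsilon$. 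Combined with the finiteness of jump orbits per necklace from Proposition~\ref{prop:Bivalent}, this gives (1). The key ingredient you are missing is Gerasimov's cocompactness on pairs, which is what replaces the more direct arguments available in the word-hyperbolic case.
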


\begin{proof}
Since a circle contains no inseparable cut pairs, we assume that $M$ is not homeomorphic to a circle. 
By definition every inseparable exact cut pair is either isolated or in a $\sim$--class whose closure is a necklace.
By a theorem of Gerasimov \cite{Ger09}, the geometrically finite convergence group $G$ acts cocompactly on the space of ordered pairs of distinct points of $M$.
Let $\Delta$ denote the diagonal of $M\times M$. Observe that the space of distinct pairs $(M\times M) \setminus \Delta$ has an exhaustion by compact sets $C_\epsilon = \bigset{ (\zeta,\xi) }{ d(\zeta,\xi) \ge \epsilon }$ for all $\epsilon>0$.
Expressing Gerasimov's result in terms of this exhaustion, we see that, there exists $\epsilon>0$ such that any subset $S \subset M$ with $\abs{S}=2$ has a $G$--translate with diameter at least $\epsilon$.

By Lemmas 3.15 and~3.16 of \cite{Bow98_JSJ}, only finitely many isolated exact cut pairs and finitely many necklaces in $M$ have diameter greater than $\epsilon$ (for any positive $\epsilon$).
Therefore the exact cut pairs of $M$ lie in only finitely many $G$--orbits of isolated exact cut pairs and necklaces.
But each of these finitely many necklaces $\overline{N}$ contains only finitely many $\Stab(\overline{N})$--orbits of jumps by Proposition~\ref{prop:Bivalent}. Therefore $M$ contains only finitely many $G$--orbits of inseparable exact cut pairs that are jumps of necklaces. In particular the exact cut pairs of $M$ lie in finitely many $G$--orbits.

By definition, any inseparable cut pair contained in a necklace must be a jump.
By Propositions \ref{prop:M3Pairs} and~\ref{prop:Bivalent}, each inseparable exact cut pair is the fixed point set of a loxodromic element.
For any two loxodromic elements, their fixed point sets are either disjoint or equal (see \cite{Tuk94}), completing the proof of the lemma.
\end{proof}

%%%%%%%%%%%%%%%%%%%%%%%%%%%%%%%%%%%%%%%%%%%%%%%%%%%%%%%%%
\section{Splittings over relatively quasiconvex subgroups}
\label{sec: trees separate}
%%%%%%%%%%%%%%%%%%%%%%%%%%%%%%%%%%%%%%%%%%%%%%%%%%%%%%%%%

The main results in this paper depend on the following splitting theorem due, independently, to Bigdely--Wise and Guirardel--Levitt.

\begin{proposition}[\cite{BigdelyWise13},\cite{GuirardelLevitt15_AutRelHyp}]
\label{prop:BigdelyWise}
Let $(G,\bbp)$ be relatively hyperbolic. Suppose $G$ acts on a tree $T$ relative to peripheral subgroups.
If all edge stabilizers are finitely generated and relatively quasiconvex, then all vertex stabilizers are finitely generated and relatively quasiconvex.
\end{proposition}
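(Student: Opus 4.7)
The plan is to verify condition (QC-2) of Definition~\ref{def: Hruska rel qc} for each vertex stabilizer $G_v$ by working in the cusped space $X = X(G,\mathbb{P},A)$, which is $\delta$--hyperbolic. The strategy is to construct a $G$--equivariant ``projection'' of $X$ onto the tree $T$ and analyze the fibers of this map.

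First I would construct a $G$--equivariant continuous map $\phi\colon X \to T$ that sends each horoball to a single vertex of $T$. This is possible because every peripheral subgroup in $\mathbb{P}$ fixes a vertex of $T$: for each peripheral coset $gP$, the horoball based at $gP$ is sent to $g\cdot v_P$, where $v_P$ is any vertex of $T$ fixed by $P$. To extend across the Cayley graph $\Gamma(G,A)$, one sends each vertex $g\in G$ to $g\cdot v_0$ for a base vertex $v_0$ and interpolates along edges, subdividing $T$ if necessary to ensure continuity. By construction $Y_v := \phi^{-1}(v)$ is a $G_v$--invariant subcomplex of $X$ containing every horoball whose associated parabolic subgroup fixes $v$.

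The main step is to show that each $Y_v$ is quasiconvex in $X$. Given a geodesic $c$ in $X$ joining two points of $Y_v$, I want a uniform bound on how far $c$ strays from $Y_v$. Any excursion of $c$ through a neighboring fiber $Y_w$ must enter and exit via a neighborhood of the $G_e$--orbit of a base vertex, where $e$ is the edge of $T$ between $v$ and $w$. By Theorem~\ref{thm: ManningMartinez}, the orbit of each edge stabilizer $G_e$ is quasiconvex in $X$. Standard thin-triangle arguments in the hyperbolic space $X$, combined with the finitely many $G$--orbits of edges of $T$, should yield a uniform quasiconvexity constant for $Y_v$. Once $Y_v$ is quasiconvex, quasiconvexity of $G_v$ follows by fixing a base point $x \in \Gamma(G,A) \cap Y_v$: any geodesic in $X$ between two points of $G_v\cdot x$ lies near $Y_v$, so its intersection with the Cayley graph lies in a bounded word-metric neighborhood of $G_v\cdot x$ (horoballs lie entirely inside $Y_v$ and correspond to parabolic subgroups of $G_v$), giving (QC-2). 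Finite generation of $G_v$ then follows from standard results that relatively quasiconvex subgroups of finitely generated relatively hyperbolic groups are themselves finitely generated, once one notes that the inherited peripheral subgroups of $G_v$ are the infinite intersections $G_v \cap gPg^{-1}$ and each such intersection is relatively quasiconvex by Proposition~\ref{prop:IntersectionQC}.

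The main obstacle will be the quasiconvexity step for $Y_v$. The geodesic $c$ may oscillate between $v$ and several neighboring vertices of $T$, and at each oscillation one must track both the entrance and exit through an edge-stabilizer orbit and the possible excursions of $c$ deep into horoballs lying inside the intermediate fibers. An inductive argument along $c$, analogous to the one used by Bigdely--Wise in the setting of fine hyperbolic graphs, should suffice, but requires careful bookkeeping of the resulting constants in terms of $\delta$ and the quasiconvexity constants of the finitely many $G$--orbits of edge stabilizers. The absence of any acylindricity assumption means this bookkeeping cannot be short-circuited by bounding the complexity of the fixed subtrees of edge stabilizers.
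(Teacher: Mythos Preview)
The paper does not actually prove Proposition~\ref{prop:BigdelyWise}; it is cited as a known result of Bigdely--Wise and Guirardel--Levitt. What the paper \emph{does} prove is the strictly stronger Proposition~\ref{prop: rel qc}, in which the hypothesis that the action is relative to~$\mathbb{P}$ is dropped. Your outline is essentially the specialization of that argument to the relative case, and the simplification you exploit is exactly the right one: because each peripheral subgroup fixes a vertex of~$T$, you can collapse each horoball of the cusped space to a single point of~$T$, so the fibers $Y_v=\phi^{-1}(v)$ contain entire horoballs. In the paper's proof of Proposition~\ref{prop: rel qc} this collapse is unavailable; instead one composes the horoball projection $X\to\Gamma(G,A)$ with the orbit map $\Gamma(G,A)\to T$, and must then cope with ``accidental parabolics'' of edge groups---infinite intersections $G_e\cap gPg^{-1}$ for which $\Gamma_{gP}$ does not meet the relevant midpoint preimage. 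This forces the paper to enlarge the induced peripheral structure $\mathbb{O}_e$ on each $G_e$ and invoke (QC-1) to obtain genuinely quasiconvex subsets of $X$ replacing the naive preimages $\pi^{-1}(m_e)$. Your relative hypothesis short-circuits all of this, which is the real content of the distinction between the two propositions.

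One point in your writeup is not correct as stated. Relative quasiconvexity does \emph{not} imply finite generation, so the sentence ``Finite generation of $G_v$ then follows from standard results that relatively quasiconvex subgroups of finitely generated relatively hyperbolic groups are themselves finitely generated'' is wrong. Finite generation of vertex stabilizers follows instead from elementary Bass--Serre theory: a finitely generated group acting minimally on a tree has finitely many orbits of edges (as the paper notes, citing \cite[Cor.~3.8]{SW79}), and a graph of groups with finitely generated fundamental group, finite underlying graph, and finitely generated edge groups has finitely generated vertex groups. This repair is easy, but you should make it explicitly rather than appealing to a false general principle.
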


In this section, we prove a more general result, in which we drop the assumption that the action is relative to peripheral subgroups.
Although this stronger result is not needed in the rest of this paper, it has been used in Wise's study of toral relatively hyperbolic groups with quasiconvex hierarchies \cite{WiseHierarchies}.

\begin{proposition}
\label{prop: rel qc}
Let $(G,\bbp)$ be relatively hyperbolic. If $G$ acts on a tree $T$ with finitely generated, relatively quasiconvex edge stabilizers, then the stabilizer of any vertex of $T$ is also finitely generated and relatively quasiconvex.  
\end{proposition}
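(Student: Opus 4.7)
The plan is to prove the two parts of the statement separately. For finite generation of a vertex stabilizer, replace $T$ by its minimal $G$-invariant subtree, which preserves vertex stabilizers. Since $G$ is finitely generated, the minimal subtree has only finitely many $G$-orbits of edges by \cite[Cor.~3.8]{SW79}. Combined with the hypothesis that each edge stabilizer is finitely generated, standard Bass-Serre theory yields that every vertex stabilizer is finitely generated.

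For relative quasiconvexity, the strategy is to reduce to Proposition~\ref{prop:BigdelyWise} by constructing an auxiliary $G$-tree $T^*$ on which the action is relative to $\bbp$ (every peripheral subgroup fixes a vertex), still has finitely generated relatively quasiconvex edge stabilizers, and on which every vertex stabilizer of $T$ is preserved as a vertex stabilizer. The only hypothesis of Proposition~\ref{prop:BigdelyWise} that may fail for $T$ itself is that the action be relative to $\bbp$; restoring this property is the entire purpose of the construction.

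The tree $T^*$ is built by introducing new fixed vertices for those peripheral subgroups that do not already fix a vertex of $T$. Concretely, for each conjugacy class of $P \in \bbp$ not fixing a vertex of $T$, let $T_P$ denote its minimal $P$-invariant subtree. Introduce a new vertex $v_P$ with $\Stab(v_P)=P$ (and correspondingly $v_{gPg^{-1}}:=g v_P$ for each conjugate), attached to $T_P$ by new edges whose stabilizers take the form $P \cap G_e$ for $G_e$ an edge stabilizer of $T$. Each such intersection is relatively quasiconvex by Proposition~\ref{prop:IntersectionQC} applied to the relatively quasiconvex subgroups $P$ (peripheral subgroups being relatively quasiconvex is standard) and $G_e$; finite generation of the intersection follows analogously.

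The main obstacle is performing this attachment so that $T^*$ is a genuine tree. Since $P$ has no fixed vertex in $T$, a single equivariant attaching edge from $v_P$ is impossible, while attaching an entire $P$-orbit of edges creates cycles running through $T_P$. My plan is to equivariantly subdivide each edge $e$ of $T_P$ by a midpoint $m_e$ (still stabilized by $G_e$), attach $v_P$ to each $m_e$ by a new edge of stabilizer $P \cap G_e$, and then excise enough half-edges of the subdivided $T_P$ to break the resulting cycles while maintaining connectedness to the complementary branches of $T$. Making this excision $G$-equivariant---and handling the possible overlaps between the minimal subtrees of different peripheral subgroups---is the technical heart of the construction. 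Once $T^*$ is built, Proposition~\ref{prop:BigdelyWise} applied to $T^*$ delivers that each vertex stabilizer of $T^*$, and hence each vertex stabilizer of $T$, is finitely generated and relatively quasiconvex.
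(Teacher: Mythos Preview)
Your approach is genuinely different from the paper's and, if it could be completed, would give a cleaner reduction. However, the construction of the auxiliary tree $T^*$ is not a detail to be deferred---it is the entire content of the argument, and as written it has a real gap.

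The difficulty you yourself flag (``handling the possible overlaps between the minimal subtrees of different peripheral subgroups'') is not merely technical. The minimal subtrees $T_P$ and $T_{gPg^{-1}}$ for distinct conjugates, and for distinct peripheral subgroups, may share edges. An edge $e$ lying in several such subtrees would have its midpoint coned to several new vertices $v_P, gv_P, v_{P'}, \ldots$, and your excision procedure must then remove half-edges of $e$ in a way that is simultaneously equivariant under the full $G$--action, breaks all cycles through every cone vertex, and keeps both endpoints of $e$ connected to the rest of the graph. You have not indicated how to do this, and it is not clear that a coherent choice exists in general. Without it, $T^*$ is not known to be a tree, and Proposition~\ref{prop:BigdelyWise} cannot be invoked. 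A secondary point: you assert that $P\cap G_e$ is finitely generated ``analogously'' to relative quasiconvexity, but Proposition~\ref{prop:IntersectionQC} does not assert finite generation of intersections, so this also requires justification (it does follow from the induced peripheral structure on $G_e$, but you should say so).

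By contrast, the paper avoids any tree surgery. It works directly in the cusped space $X(G,\mathbb{P},A)$, building an equivariant map $\pi\colon X\to T$ by projecting horoballs to their base cosets and then applying an orbit map to $T$. For each vertex $v$, geodesics in $X$ with endpoints in $G_v$ decompose into subarcs lying over the open star $S_v$ and subarcs whose endpoints lie over a single edge midpoint $m_e$. The latter subarcs are controlled using the quasiconvexity of the edge group $G_e$ in the sense of Definition~\ref{def: AGM RC}; the delicate point is that accidental parabolics of $G_e$ (infinite intersections $G_e\cap gPg^{-1}$ whose coset does not meet $\alpha^{-1}(m_e)$) must be added to the induced peripheral structure of $G_e$ to obtain a genuinely quasiconvex image in $X$. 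This yields Definition~\ref{def: Hruska rel qc} for $G_v$ directly. The paper's argument is longer but does not depend on any unproved combinatorial construction.
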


Proposition~\ref{prop: rel qc} is claimed in \cite[Lem.~4.9]{BigdelyWise13}.
Unfortunately the proof provided there is not enough to establish Proposition~\ref{prop: rel qc}, but instead establishes only the weaker result of Proposition~\ref{prop:BigdelyWise}.

In personal communication, Wise explained to the authors an argument for why Proposition~\ref{prop: rel qc} should be true using the criterion for relative quasiconvexity in \cite{MartinezWise11}.
In the proof presented below, we instead rely on the strong characterization of relative quasiconvexity due to Agol--Groves--Manning (see Definition~\ref{def: AGM RC}) as well as the weaker characterization due to the second author (see Definition~\ref{def: Hruska rel qc}).
The argument presented here is somewhat different from Wise's suggestion in the details, but close in spirit.

\begin{proof}[Proof of Proposition~\ref{prop: rel qc}]
Let $(G,\bbp)$ be relatively hyperbolic, and assume $T$ is a simplicial tree on which $G$ acts with finitely generated, relatively quasiconvex edge stabilizers.
Without loss of generality, we may assume that the action is minimal and without inversions.
Let $\Gamma(G,A)$ denote the Cayley graph of $G$ with respect to a finite adapted generating set $A$, and let $X=X(G,\mathbb{P},A)$ be the corresponding cusped space.
We construct a map $\pi\colon X\rightarrow T$  as a composition of maps $\alpha\circ\phi$ where $\phi\colon X\rightarrow \Gamma(G,A)$ is given by projecting each horoball $\text{cusp}(gP_i)$ to its base coset $gP_i$ in $\Gamma(G,A)$, and $\alpha$ is given by an orbit map. Namely,  $\alpha\colon\Gamma(G,A)\rightarrow T$ is obtained by arbitrarily choosing a point in $T$ as the image of the identity vertex of $\Gamma(G, A)$ then mapping all other vertices of $\Gamma(G,A)$ to $T$ equivariantly. We extend the map to send the edges of $\Gamma(G, A)$ to geodesics in $T$ equivariantly.  We note that the map $\alpha$ is continuous with respect to the CW topology on $T$, and hence also with respect to the coarser metric topology.

Let $v$ be a vertex of $T$.  Define $S_v$ to be the union of $v$ and all adjacent half-edges, where by half-edge we mean the edge segment connecting $v$ to the midpoint $m_e$ of the edge adjacent $e$ to $v$. 

In order to verify that $G_v$ satisfies Definition~\ref{def: Hruska rel qc}, we consider an arbitrary geodesic $\sigma$ in $X$ with endpoints in $G_v$. 
We will show that the intersection $\sigma \cap \Gamma(G,A)$ lies within a uniformly bounded neighborhood of $G_v$.

In the tree, the set $S_v$ is bounded by the midpoints $m_e$ of neighboring edges in the following sense: any path with endpoints in $S_v$ whose interior is disjoint from $S_v$ must be a loop with both endpoints at $m_e$ for some edge $e$ adjacent to $v$.

By continuity of $\alpha$, we get a corresponding separation property in $\Gamma(G,A)$ as follows:
any path with endpoints in $\alpha^{-1}(S_v)$ whose interior is disjoint from $\alpha^{-1}(S_v)$ has both endpoints in the same set $\alpha^{-1}(m_e)$, where $e$ is an edge adjacent to $v$.
Thus the map $\pi=\alpha \of \phi \colon X \to T$ has a similar separation property.

Note that any minimal action of a finitely generated group $G$ on a tree $T$ has only finitely many orbits of vertices and edges \cite[Cor.~3.8]{SW79}.  Let $\mathcal{V}$ and $\mathcal{E}$ be sets of representatives of the finitely many $G$--orbits in $T$ of vertices and edges respectively.
For each $e \in \mathcal{E}$, the set $\alpha^{-1}(m_{e})$ is within a finite Hausdorff distance of the subgroup $G_e = \Stab(e)$ (see for instance \cite[Lem.~7.3]{HR1}). 
Similarly for each $v \in \mathcal{V}$, the set $\alpha^{-1}(S_v)$ is within finite Hausdorff distance of $G_v=\Stab(v)$.
For the other edges $xe$ with $x \in G$ and $e \in \mathcal{E}$, the set $\alpha^{-1}(m_{xe}) = x \alpha^{-1}(m_e)$ is within a uniformly bounded Hausdorff distance of the coset $x G_e$.
A similar conclusion holds as well for the vertices of an orbit.

Informally, for each edge $e \in \mathcal{E}$ the set $\pi^{-1}(m_e)$ is coarsely equal to a copy of $G_e$ together with cusp regions corresponding to the intersections of $\alpha^{-1}(m_e)$ with coset graphs $\Gamma_{gP}$.
Since $G_e$ is relatively quasiconvex, naively one might expect the sets $\pi^{-1}(m_e)$ to be quasiconvex as in Definition~\ref{def: AGM RC}.
However in general they are not quasiconvex, due to the presence of accidental parabolics.

An \emph{accidental parabolic subgroup} of $G_e$ is an infinite subgroup of the form $G_e \cap gPg^{-1}$ such that $\alpha^{-1}(m_e)$ does not intersect the subgraph $\Gamma_{gP}$.  Such a subgroup is analogous to an accidental parabolic of a geometrically finite (\emph{i.e.}, relatively quasiconvex) surface in a finite volume hyperbolic $3$--manifold. A geometrically finite surface in $\Hyp^3$ that contains accidental parabolics is not a quasiconvex subset of $\Hyp^3$ because certain cusp regions are missing from the surface but are contained in its convex hull.

For each edge $e \in \mathcal{E}$, let $\mathcal{M}_e$ be the collection of cosets $gP$ such that $g\in G$, $P\in\mathbb{P}$, and  $\alpha^{-1}(m_e)\cap \Gamma_{gP}$ is a non-empty, bounded set.  
Since $G_e$ acts cocompactly on $\alpha^{-1}(m_e)$ and the family of peripheral cosets $\set{gP}{g\in G \text{ and } P \in \mathbb{P}}$ is locally finite, the members of $\mathcal{M}_e$ lie in finitely many $G_e$--orbits.  Choose representatives $g_1 P_1,\dots,g_k P_k$ for the $G_e$--orbits and consider the corresponding finite subgroups
\[
   G_e \cap g_1 P_1 g_1^{-1}, \ldots,
   G_e \cap g_k P_k g_k^{-1}.
\]
We note that a finite subgroup could occur multiple times in the above list.  However the indexing of the list keeps track of the distinct peripheral cosets associated with the finite subgroups.
Since $G_e$ is relatively quasiconvex, the cosets $gP$ such that $G_e \cap gPg^{-1}$ is infinite are also in finitely many $G_e$--orbits 
(see \cite[Lem.~6.7]{HruskaWise_Finiteness}) with representatives $g_{k+1} P_{k+1},\dots,g_\ell P_\ell$ and corresponding subgroups
\[
   G_e \cap g_{k+1} P_{k+1} g_{k+1}^{-1}, \ldots,
   G_e \cap g_\ell P_\ell g_\ell^{-1}.
\]
This list includes all cosets $g_iP_i$ for which the intersection $\alpha^{-1}(m_e) \cap \Gamma_{g_i P_i}$ is unbounded, as well as possibly some cosets such that $\Gamma_{g_iP_i}$ does not intersect $\alpha^{-1}(m_e)$ at all.
The latter type correspond to the accidental parabolic subgroups of $G_e$.

Let $\mathbb{O}_e$ be the indexed family of subgroups $O_1,\dots,O_\ell$, where $O_i = G_e \cap g_iP_ig_i^{-1}$.
Then $(G_e,\mathbb{O}_e)$ is relatively hyperbolic.
Indeed by relative quasiconvexity, $G_e$ is hyperbolic relative to the infinite subgroups $O_{k+1},\dots,O_\ell$ (see \cite{Hru10}).
Since finitely many finite subgroups can be added to the peripheral structure of any relatively hyperbolic group \cite{Osin06Elementary}, it follows that $G_e$ is also hyperbolic relative to $\mathbb{O}_e$.

Let $\iota\colon G_e \hookrightarrow G$ be the inclusion, and consider for each $O_i$ the canonical inclusion $O_i = G_e \cap g_iP_ig_i^{-1} \hookrightarrow g_iP_ig_i^{-1}$.
Since $G_e$ is relatively quasiconvex, Definition~\ref{def: AGM RC} implies that $\check{\iota}\big(X(G_e,\mathbb{O}_e)\big)$ is a $\kappa$--quasiconvex subspace of $X(G,\bbp)$ for some constant $\kappa$ not depending on the choice of $e \in \mathcal{E}$. The group $G_e$ acts cocompactly on $\alpha^{-1}(m_e)$ and there are only finitely many orbits of edges in $\mathcal{E}$, so there exists a  constant $\rho<\infty$ such that $\alpha^{-1}(m_e)$ is contained in the $\rho$--neighborhood of $G_e$.
By choice of $\mathbb{O}_e$, if $gP$ is any peripheral coset in $G$ such that $\alpha^{-1}(m_e) \cap \Gamma_{gP}$ is nonempty, then $gP = h g_i P_i$ for some $i$ and some $h \in G_e$.
Translating by the isometry $h^{-1}$, we see that $\alpha^{-1}(m_e) \cap \Gamma_{g_iP_i}$ is nonempty and lies within a $\rho$--neighborhood of $G_e$ and within a $\abs{g_i}$--neighborhood of $g_i P_i g_i^{-1}$. Therefore it lies within an $L$--neighborhood of the intersection $G_e \cap g_i P_i g_i^{-1} = O_i$ for some constant $L$ depending only on $\rho$ and the finitely many lengths $\abs{g_j}$ for $j=1,\dots,\ell$ by \cite[Prop.~9.4]{Hru10}.
Translating back by $h$, we conclude that 
$\alpha^{-1}(m_e) \cap \Gamma_{gP}$ lies within an $L$--neighborhood of $hO_i$.
Thus $\pi^{-1}(m_e)$ is contained in the $\rho$--neighborhood of $\check{\iota}\big(X(G_e,\mathbb{O}_e)\big)$. Note that action of $G$ on $X(G,\bbp)$ is by isometries, so for every $x\in G$ and $e \in \mathcal{E}$ the set $\pi^{-1}(m_{xe})$ is contained in the $\rho$--neighborhood of the quasiconvex set $x\check{\iota}\big(X(G_e,\mathbb{O}_e)\big)$.

For each $v \in \mathcal{V}$, we will show that $G_v= \Stab(v)$ is relatively quasiconvex by verifying Definition~\ref{def: Hruska rel qc}.
Observe that the set $\alpha^{-1}(S_v)$ is within a finite Hausdorff distance of the subgroup $G_v = \Stab(v)$.
We will show that any geodesic $\sigma$ with endpoints in $\alpha^{-1}(S_v)$ lies in a bounded neighborhood of the union of $G_v$ and the family of all horoballs of $X$.

The geodesic $\sigma$ is a concatenation of finitely many arcs that lie in $\pi^{-1}(S_v)$ and finitely many arcs with endpoints in $\pi^{-1}(S_v)$ whose interior is disjoint from $\pi^{-1}(S_v)$.
Since $\pi^{-1}(S_v)$ lies in a bounded neighborhood of the union of $G_v$ and the family of all horoballs of $X$, it suffices to consider geodesic segments $\sigma'$ of the second type: those with endpoints in $\pi^{-1}(S_v)$ whose interior is disjoint from $\pi^{-1}(S_v)$.

Any such geodesic $\sigma'$ must lie in a uniformly bounded neighborhood of the $\kappa$--quasiconvex set $x\check{\iota}\big(X(G_e,\mathbb{O}_e)\big)$ for some edge $xe$ incident to $v$ in the tree $T$.
As before the edges adjacent to $v$ lie in finitely many $G_v$--orbits represented by edges $x_1 e_1,\dots,x_r e_r$ with $x_j \in G$ and $e_j \in \mathcal{E}$.
Since this collection is finite, there is an upper bound on the Hausdorff distance between the coset $x_j G_{e_j}$ and the subgroup $x_j G_{e_j} x_j^{-1} \le G_v$.  Thus the cosets $x_j G_{e_j}$ for $j=1,\dots,r$ lie in a uniformly bounded neighborhood of $G_v$.
It follows that if $x\in G$ and $e\in \mathcal{E}$ and the edge $xe$ is incident to $v$, then the coset $x G_e$ lies in a uniformly bounded neighborhood of $G_v$.

Therefore the intersection $\sigma' \cap \Gamma(G,A)$ must lie within a uniformly bounded neighborhood of $\alpha^{-1}(S_v)$, and hence also of $G_v$. Thus $G_v$ satisfies the criterion for relative quasiconvexity given in Definition~\ref{def: Hruska rel qc}.
\end{proof}

%%%%%%%%%%%%%%%%%%%%%%%%%%%%%%%%%%%%%%%%%%%%%%%%%%%
\section{Elementary splittings and decompositions of the boundary}
\label{sec:Decompositions}
%%%%%%%%%%%%%%%%%%%%%%%%%%%%%%%%%%%%%%%%%%%%%%%%%%%

Suppose $(G,\bbp)$ is relatively hyperbolic. Let $\mathbb{E}$ be the collection of infinite elementary subgroups of $(G,\bbp)$ and let $\mathbb{E}_1\subseteq\mathbb{E}$ consist of the finitely generated, infinite elementary subgroups.
The main result of this section is Proposition~\ref{Prop: separation}, which states that any $(\mathbb{E}_1,\mathbb{P})$--tree $T$ on which $G$ acts is dual to a family of separations of the boundary corresponding to the edges of $T$.
The special case in which $G$ is word hyperbolic and $\mathbb{P}$ is empty is due to Bowditch \cite[\S 1]{Bow98_JSJ}.
The restriction to $(\mathbb{E}_1,\mathbb{P})$--trees does not affect the generality of our main results, since every $(\mathbb{E},\mathbb{P})$--tree is dominated by an $(\mathbb{E}_1,\mathbb{P})$--tree
\cite[Cor.~2.25]{GL_JSJ}.
We remark that the results of this section do not use the local connectivity of the boundary.

%In order to prove Proposition~\ref{Prop: separation}, we study the boundary of a relatively hyperbolic pair $(G,\mathbb{P})$ equipped with an elementary splitting $T$ relative to peripheral subgroups.  Our main strategy is to prove ???????, which states that $\boundary (G,\mathbb{P})$ is formed by gluing together the limit sets of the vertex groups along the limit sets of corresponding edge groups and then compactifying by adding an ``ideal'' point for each end of the tree $T$. 

We note that each vertex group of an $(\mathbb{E}_1,\mathbb{P})$--tree $T$ is relatively hyperbolic in a natural way.

\begin{lemma}
Suppose $(G,\mathbb{P})$ is relatively hyperbolic, and suppose $T$ is an $(\mathbb{E}_1,\mathbb{P})$--tree.
Then each vertex stabilizer $G_v$ is relatively quasiconvex.  In particular, $(G_v,\mathbb{O}_v)$ is relatively hyperbolic, where $\mathbb{O}_v$ denotes a set of representatives of the conjugacy classes of infinite subgroups $G_v \cap gPg^{-1}$ for $g \in G$ and $P \in \mathbb{P}$.
Furthermore, the limit set of $G_v$ in $\boundary(G,\mathbb{P})$ is equivariantly homeomorphic to the Bowditch boundary $\boundary(G_v,\mathbb{O}_v)$.
\end{lemma}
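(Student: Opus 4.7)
The plan is to reduce each assertion of the lemma to a known fact about relatively quasiconvex subgroups, using Proposition~\ref{prop:BigdelyWise} as the main engine. First I would observe that since $T$ is an $(\mathbb{E}_1,\mathbb{P})$--tree, every edge stabilizer $G_e$ is a finitely generated infinite elementary subgroup of $(G,\mathbb{P})$, so $G_e$ is either $2$--ended or parabolic. In either case $G_e$ is relatively quasiconvex: $2$--ended subgroups of $(G,\mathbb{P})$ are relatively quasiconvex, and any subgroup of a conjugate of some $P\in\mathbb{P}$ is trivially relatively quasiconvex. Moreover, the $G$--action on $T$ is by definition relative to $\mathbb{P}$, so the hypotheses of Proposition~\ref{prop:BigdelyWise} are satisfied and each $G_v$ is finitely generated and relatively quasiconvex.

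Second, I would invoke the standard induced peripheral structure on a relatively quasiconvex subgroup (as established for instance in \cite{Osin06,Hru10}): whenever $H$ is a finitely generated relatively quasiconvex subgroup of $(G,\mathbb{P})$, there are only finitely many $H$--conjugacy classes of infinite intersections $H\cap gPg^{-1}$, and choosing one representative from each class gives a collection $\mathbb{O}$ such that $(H,\mathbb{O})$ is relatively hyperbolic. Applied to $H=G_v$, this yields the family $\mathbb{O}_v$ named in the statement and produces the relatively hyperbolic structure $(G_v,\mathbb{O}_v)$.

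Third, for the boundary identification I would use the map $\check{\iota}\colon X(G_v,\mathbb{O}_v)\hookrightarrow X(G,\mathbb{P})$ from Definition~\ref{def: AGM RC}. Because $G_v$ is relatively quasiconvex, this map is a $G_v$--equivariant quasi-isometric embedding whose image is quasiconvex in the hyperbolic cusped space $X(G,\mathbb{P})$. Any such embedding induces a $G_v$--equivariant topological embedding of Gromov boundaries $\boundary(G_v,\mathbb{O}_v)\hookrightarrow\boundary(G,\mathbb{P})$ whose image is precisely the set of accumulation points of a $G_v$--orbit, i.e.\ the limit set $\Lambda G_v$. This yields the desired equivariant homeomorphism $\boundary(G_v,\mathbb{O}_v)\cong\Lambda G_v$.

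No genuine obstacle lies in any of these three steps beyond bookkeeping, since each input is standard in the relatively quasiconvex subgroup literature. The only point demanding mild care is that the induced peripheral family $\mathbb{O}_v$ produced by the general theory coincides with the one named in the statement; but both are, by construction, systems of representatives for the conjugacy classes of infinite subgroups of the form $G_v\cap gPg^{-1}$, so this identification is automatic.
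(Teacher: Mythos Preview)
Your argument is correct and follows essentially the same route as the paper's proof: observe that elementary edge groups are relatively quasiconvex, apply Proposition~\ref{prop:BigdelyWise} to conclude the vertex groups are relatively quasiconvex, and then cite the standard facts from \cite{Dahm03,Hru10} that a relatively quasiconvex subgroup carries the induced peripheral structure and has Bowditch boundary equivariantly homeomorphic to its limit set. You have simply unpacked these citations in more detail than the paper does.
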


\begin{proof}
Since each edge group is elementary, it is relatively quasiconvex.
By Proposition~\ref{prop:BigdelyWise}, each vertex group is also relatively quasiconvex.
Such a subgroup is relatively hyperbolic with Bowditch boundary equal to its limit set by \cite{Dahm03,Hru10}.
\end{proof}

We review the notion of the tree of cylinders associated to a splitting, a notion that is implicit in work of Bowditch \cite{Bow98_JSJ,Bow01_Peripheral} and was explicitly introduced by Guirardel--Levitt \cite{GL_TreesCyl}.

\begin{defn}[Tree of cylinders]
Let $(G,\mathbb{P})$ be relatively hyperbolic.
%Let $\mathbb{E}$ be the family of infinite elementary subgroups.
Given an $(\mathbb{E},\mathbb{P})$--tree $T$, edges $e$ and $e'$ are \emph{equivalent}, denoted $e \sim e'$, if their stabilizers are \emph{co-elementary}, \emph{i.e.,} if the subgroup $\langle G_e, G_{e'} \rangle$ generated by $G_e$ and $G_{e'}$ is elementary. For each equivalence class, the smallest subtree of $T$ containing the equivalence class is a \emph{cylinder}.
The \emph{tree of cylinders} of $T$ is the bipartite graph $T_c$ with vertex set $\mathcal{V} \sqcup \mathcal{W}$, where $\mathcal{V}$ is the set of all cylinders of $T$ and $\mathcal{W}$ is the set of vertices of $T$ belonging to at least two distinct cylinders, such that vertices $v \in \mathcal{V}$ and $w \in \mathcal{W}$ are joined by an edge if $w \in v$. The graph $T_c$ is an $(\mathbb{E},\mathbb{P})$--tree by \cite[\S 4]{GL_TreesCyl}.
%
%
%If trees $T$ and $T'$ are equivalent in the sense of Section~\ref{sec:JSJ}, then their trees of cylinders are equal.  In particular, any two JSJ trees have the same tree of cylinders.
%We note that the tree of cylinders for an $(\mathbb{E}_1,\mathbb{P})$--tree with all edge groups parabolic is the minimal invariant subtree of a peripheral splitting in the sense of Bowditch \cite{Bow01_Peripheral}.
\end{defn}

%The following result is implicit in \cite[Prop.~5.1]{Bow01_Peripheral} and is proved explicitly by Guirardel--Levitt.

\begin{theorem}[\cite{GL_TreesCyl}]
\label{thm:TreeOfCylinders}
Let $(G,\mathbb{P})$ be relatively hyperbolic.
Let $T$ be an $(\mathbb{E},\mathbb{P})$--tree.
Then the corresponding tree of cylinders $T_c$ is an acylindrical $(\mathbb{E},\mathbb{P})$--tree.
The trees $T$ and $T_c$ are equivalent in the sense of Section~\ref{sec:JSJ} and have the same family of nonelementary vertex stabilizers.
If two trees $T$ and $T'$ are equivalent, then they have the same tree of cylinders.
\end{theorem}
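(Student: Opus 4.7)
The plan is to verify that the co-elementarity relation $\sim$ on edges of $T$ is an admissible equivalence relation in the sense of \cite{GL_TreesCyl}, so that their general tree-of-cylinders construction applies verbatim. Admissibility requires that $\sim$ is $G$-equivariant, transitive, and that each $\sim$-class of edges spans a subtree of $T$. The first two properties follow from the standard fact that in a relatively hyperbolic group each infinite elementary subgroup $H$ has a unique maximal elementary overgroup $E(H)$, and $H\sim H'$ if and only if $E(H)=E(H')$; this gives a canonical bijection between $\sim$-classes and the maximal elementary subgroups arising as elementary closures of edge stabilizers of $T$.

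The heart of the verification is the subtree property. Given $e_1\sim e_2$ with common elementary closure $E$ and any edge $f$ on the geodesic $[e_1,e_2]$, I would split into two cases. If $E$ is loxodromic (two-ended), then $G_{e_1}$ and $G_{e_2}$ are both finite index in $E$, so $G_{e_1}\cap G_{e_2}$ is infinite and fixes all of $[e_1,e_2]$; hence $G_f$ is elementary and contains this infinite subgroup of $E$, so maximality of $E$ forces $G_f\subseteq E$, giving $f\sim e_1$. If $E$ is parabolic, then $E\le gPg^{-1}$ for some $P\in\mathbb{P}$, and the hypothesis that each peripheral subgroup acts elliptically on $T$ provides a vertex $v_P$ fixed by $gPg^{-1}$, hence by both $G_{e_1}$ and $G_{e_2}$. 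In a tree, each edge $f$ on $[e_1,e_2]$ lies on either $[e_1,v_P]$ or $[e_2,v_P]$, so $G_f$ contains $G_{e_1}$ or $G_{e_2}$; elementarity and maximality of $E$ again give $G_f\subseteq E$.

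With admissibility established, the construction of $T_c$ as an $(\mathbb{E},\mathbb{P})$-tree is immediate from \cite[\S 4]{GL_TreesCyl}, and its edges inherit elementary stabilizers since any element fixing two edges of a common cylinder lies in their common elementary closure. For acylindricity, if $g\in G$ pointwise fixes an arc of $T_c$ crossing two distinct cylinders $C_1,C_2$, then $g$ stabilizes an edge in each cylinder, so $g\in E_1\cap E_2$ for distinct maximal elementary subgroups of $(G,\mathbb{P})$; this intersection is finite, with the bound independent of the arc, giving a uniform $(k,C)$ acylindricity constant. Equivalence of $T$ and $T_c$ amounts to matching nonelementary vertex stabilizers: a nonelementary subgroup of $G$ cannot lie inside any single elementary closure, so whenever it fixes a vertex of $T$ it necessarily fixes a point of $\mathcal{W}\subset T_c$, and conversely each vertex of $T_c$ fixes either a vertex or a cylinder subtree of $T$. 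The final claim—that two equivalent $(\mathbb{E},\mathbb{P})$-trees share the same tree of cylinders—is the deformation-space invariance theorem of \cite{GL_TreesCyl}, which I would simply cite. The main obstacle is the parabolic case of the subtree property, which is where the hypothesis that peripheral subgroups act elliptically on $T$ is essential: without it one cannot locate the common fixed vertex $v_P$, and cylinders of parabolic type can genuinely fail to be connected.
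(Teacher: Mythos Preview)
The paper does not supply its own proof of this theorem; it is stated with attribution to \cite{GL_TreesCyl} and used as a black box. Your proposal is therefore not competing with any argument in the paper---it is supplying the verification that the paper omits, namely that the co-elementarity relation on edge groups of an $(\mathbb{E},\mathbb{P})$--tree satisfies the admissibility axioms of Guirardel--Levitt so that their general machinery applies. Your sketch of this verification is correct: the loxodromic case of the subtree property uses that the two edge groups have finite index in their common elementary closure, and the parabolic case uses ellipticity of peripheral subgroups to locate a common fixed vertex and then a tripod/median argument. Both are standard and accurate.

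One small point worth tightening: in your acylindricity paragraph you assert a uniform bound $C$ on $|E_1\cap E_2|$ for distinct maximal elementary subgroups. This is true for relatively hyperbolic $(G,\mathbb{P})$, but it is not entirely trivial---it follows from the almost malnormality of the peripheral structure together with a bound on finite subgroups arising in intersections of distinct maximal elementaries (see for instance Osin's work on elementary subgroups). Since the paper's Definition~\ref{def: acylindrical} requires a uniform $C$ rather than merely finiteness, you should cite a source for this bound rather than leave it implicit. Apart from that, your write-up is a faithful expansion of what the citation to \cite{GL_TreesCyl} is doing.
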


In \cite{Bow01_Peripheral}, Bowditch studies the special case of $(\mathbb{E},\mathbb{P})$--trees $T$ such that all edge stabilizers are finitely generated parabolic.  In this case, the tree of cylinders $T_c$ again has finitely generated parabolic edge stabilizers and is the minimal invariant subtree of a ``peripheral splitting'' in the sense of Bowditch.
The splitting $T_c$ is closely related to the cut point structure of the Bowditch boundary, as summarized in the following result proved in \cite[\S\S 7--8]{Bow01_Peripheral}.
(For related results in different settings, see \cite{Bow98_JSJ,Swiatkowski16_DenseAmalgam,HR1}.)

\begin{proposition}[A tree of spaces]
\label{prop:TreeOfSpaces}
Let $(G,\mathbb{P})$ be relatively hyperbolic with each $P \in \mathbb{P}$ infinite. Let $T$ be an $(\mathbb{E},\mathbb{P})$--tree with all edge groups finitely generated parabolic, and let $T_c$ be its tree of cylinders.
There exists an injective function $\phi \colon \mathcal{V}(T_c) \cup \boundary T_c \to M$ with the following properties.

\begin{enumerate}
\item For each $v \in \mathcal{V}(T_c)$ the point $\phi(v)$ is the unique fixed point of the parabolic subgroup $G_v$.
We refer to elements of $\phi(\boundary T_c)$ as \emph{ideal points}.
The limit sets $G_w$ for $w \in \mathcal{W}(T_c)$ are disjoint from the set of ideal points.
Two limit sets $G_w$ and $G_{w'}$ intersect, if at all, in an element of $\phi\bigl(\mathcal{V}(T_c) \bigr)$, and $v\in \mathcal{V}(T_c)$ is adjacent to $w \in \mathcal{W}(T_c)$ if and only if $\phi(v) \in \Lambda G_w$.

\item If $S$ is any subtree of $T_c$, the set $\Phi(S) = \phi\bigl( \boundary S\bigr) \cup \bigcup_{w \in \mathcal{W}(S)} \Lambda G_w$ is the closure in $M$ of the set $\bigcup_{w \in \mathcal{W}(S)} \Lambda G_w$.  Furthermore $\Phi(T_c)=M$.
%
%\item 
%If $S$ is a branch of $T_c$ rooted at $v \in \mathcal{V}(T_c)$, we refer to $\Phi(S)$ as a \emph{branch of $M$ rooted at $\phi(v)$}.
%For each $w \in \mathcal{W}(T_c)$ the set of branches with root in $\Lambda G_w$ but not containing $\Lambda G_w$ have diameters tending to zero; \emph{i.e.}, for any $\epsilon>0$, only finitely many have diameter greater than $\epsilon$.
%
%\item Each ideal point of $M$ has a neighborhood base consisting of branches of $M$. In particular, the set $\bigcup_{w \in \mathcal{W}(T_c)} \Lambda G_w$ is dense in $M$.
\end{enumerate}
\end{proposition}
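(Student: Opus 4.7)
The plan is to follow Bowditch's analysis of peripheral splittings in \cite[\S\S 7--8]{Bow01_Peripheral}, translated into the framework of the tree of cylinders. The key structural observation is that for an $(\mathbb{E},\bbp)$--tree $T$ with finitely generated parabolic edge stabilizers, two edges of $T$ are co-elementary if and only if their stabilizers lie in a single maximal parabolic subgroup of $G$. Since maximal parabolic subgroups are self-normalizing, the stabilizer of a cylinder $v\in\mathcal{V}(T_c)$ is itself a maximal parabolic subgroup, and I would define $\phi(v)\in M$ to be its unique fixed point; injectivity on $\mathcal{V}(T_c)$ is then immediate because distinct maximal parabolics have distinct fixed points. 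Each $G_w$ for $w\in\mathcal{W}(T_c)$ is finitely generated and relatively quasiconvex by Proposition~\ref{prop:BigdelyWise}, with a well-defined limit set $\Lambda G_w\subset M$, and $G_w$ cannot be parabolic because $w$ lies in at least two cylinders corresponding to distinct maximal parabolic subgroups.

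Property (1) would then be proved as follows. If $v\sim w$ in $T_c$, some edge $e$ of $T$ in the cylinder $v$ is incident to $w$, so $G_e\le G_v\cap G_w$ is an infinite parabolic subgroup fixing $\phi(v)$, placing $\phi(v)\in\Lambda G_w$. Conversely, if $\phi(v)\in\Lambda G_w$ then Proposition~\ref{prop:IntersectionQC} forces $G_v\cap G_w$ to be an infinite parabolic subgroup (a parabolic point of $\Lambda G_w$ arises precisely from such an intersection, as $\phi(v)$ is not a conical limit point), and an edge of $T$ adjacent to $w$ stabilized by this intersection witnesses $v\sim w$. For distinct $w,w'\in\mathcal{W}(T_c)$, the intersection $G_w\cap G_{w'}$ pointwise stabilizes the geodesic from $w$ to $w'$ in $T_c$, so the acylindricity from Theorem~\ref{thm:TreeOfCylinders} forces it to be finite unless $w$ and $w'$ share an adjacent cylinder $v$, in which case $G_w\cap G_{w'}\le G_v$ is parabolic with fixed point $\phi(v)$. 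A second application of Proposition~\ref{prop:IntersectionQC} then yields $\Lambda G_w\cap\Lambda G_{w'}\subseteq\{\phi(v)\}$ in the adjacent case and $\Lambda G_w\cap\Lambda G_{w'}=\emptyset$ otherwise.

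For property (2), I would extend $\phi$ to $\partial T_c$ by choosing a ray $w_0,v_1,w_1,v_2,\ldots$ representing an end $\xi$ and setting $\phi(\xi)=\lim_n\phi(v_n)\in M$. The main obstacle, and the heart of the argument in \cite{Bow01_Peripheral}, is showing that this limit exists, is independent of the chosen ray, and lies outside $\phi\bigl(\mathcal{V}(T_c)\bigr)\cup\bigcup_w\Lambda G_w$. This requires coarse-geometric estimates in the cusped space $X=X(G,\bbp,A)$: one argues that the consecutive horoballs $\text{cusp}(G_{v_n})$ and the consecutive quasiconvex subsets associated to the $G_{w_n}$ march off to infinity in $X$ along a geodesic ray, using the acylindricity of $T_c$ together with $\delta$--hyperbolicity of $X$ to control the Gromov products $\bigl(\phi(v_n)\mid\phi(v_m)\bigr)$. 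Once $\phi$ is defined on $\partial T_c$, the closure identity $\Phi(S)=\overline{\bigcup_{w\in\mathcal{W}(S)}\Lambda G_w}$ follows by compactness: any accumulation point not already in some $\Lambda G_w$ must arise from a sequence of vertex stabilizers escaping to infinity in $S$, hence corresponds to an ideal point in $\phi(\partial S)$. Finally, $\Phi(T_c)=M$ follows by classifying points of $M$: each parabolic point equals $\phi(v)$ for the cylinder associated to its maximal parabolic stabilizer, each conical limit point either lies in some $\Lambda G_w$ directly or arises as the limit of parabolic points $\phi(v_n)$ along some ray in $T_c$, and therefore lies in $\phi(\partial T_c)$.
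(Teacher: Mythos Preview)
The paper does not give its own proof of this proposition: it is stated as a summary of results proved in \cite[\S\S 7--8]{Bow01_Peripheral}, with no further argument. Your proposal is therefore not competing with a proof in the paper but rather attempting to reconstruct Bowditch's argument, and in broad outline you have the right ingredients: identifying cylinder stabilizers with maximal parabolics, using Proposition~\ref{prop:IntersectionQC} to control intersections of limit sets, and deferring the analytic content (existence of limits along rays, $\Phi(T_c)=M$) to the coarse-geometric estimates in the cusped space that Bowditch carries out.

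There is, however, a genuine gap in your argument for the converse direction of the adjacency characterization in (1). You correctly deduce that if $\phi(v)\in\Lambda G_w$ then $G_v\cap G_w$ is infinite parabolic (since $\phi(v)$ is not conical, it must be a bounded parabolic point for the action of $G_w$). But the sentence ``an edge of $T$ adjacent to $w$ stabilized by this intersection witnesses $v\sim w$'' does not follow: there is no reason an arbitrary infinite subgroup of $G_w$ should stabilize an edge of $T$ incident to $w$. The correct argument works in $T_c$ rather than $T$: since $G_v\cap G_w$ is infinite and fixes both $v$ and $w$, it fixes the geodesic in $T_c$ joining them; if this geodesic had length at least $3$ it would pass through some $v'\in\mathcal{V}(T_c)$ with $v'\neq v$, forcing the infinite group $G_v\cap G_w$ into $G_v\cap G_{v'}$, which is finite because distinct maximal parabolic subgroups have finite intersection. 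Hence $v$ and $w$ are adjacent. The same almost-malnormality argument, not acylindricity as you suggest, is what cleanly handles the case of $\Lambda G_w\cap\Lambda G_{w'}$ for $w,w'$ at distance at least $4$ in $T_c$.

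Your treatment of (2) is honest about what remains to be done, and your sketch of $\Phi(T_c)=M$ is incomplete in one respect worth noting: not every parabolic point of $M$ is of the form $\phi(v)$, since a maximal parabolic subgroup gives a cylinder only when it contains some edge stabilizer of $T$. Parabolic points whose maximal parabolic stabilizer contains no edge group must instead be absorbed into some $\Lambda G_w$, and this requires a separate check.
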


We now begin our study of the boundary of a relatively hyperbolic pair $(G,\mathbb{P})$ in relation to an $(\mathbb{E}_1,\mathbb{P})$--tree.

\begin{defn}[Directed edges]
A \emph{directed edge} $\vec{e}$ of a tree is an edge $e$ together with an orientation, \emph{i.e.}, a choice of one adjacent vertex as the \emph{initial} vertex $\iota(\vec{e}\,)$ and the other as the \emph{terminal} vertex $\tau(\vec{e}\,)$ of $\vec{e}$.  The same edge with opposite orientation is denoted $-\vec{e}$.
\end{defn}

\begin{defn}[Halfspaces]
Suppose $(G,\mathbb{P})$ is relatively hyperbolic with each $P \in \mathbb{P}$ infinite.  Let $T$ be an $(\mathbb{E}_1,\mathbb{P})$--tree.
Let $\vec{e}$ be any oriented edge of $T$.
Removing the open edge $e$ from $T$ leaves two components $T(\vec{e}\,)$ and $T(-\vec{e}\,)$, the first containing $\tau(e)$ and the second containing $\iota(e)$.
The \emph{closed halfspaces} of $\relbndry$ corresponding to $\vec{e}$ are the closed sets
\[
   H(\vec{e}\,) = \overline{\bigcup_{v \in V(T(\vec{e}\,))} \!\!\!\!\Lambda G_v}
   \qquad\text{and}\qquad
   H(-\vec{e}\,) = \overline{\bigcup_{v \in V(T(-\vec{e}\,))} \!\!\!\!\Lambda G_v}.
\]
The \emph{open halfspaces} corresponding to $e$ are $U(\vec{e}\,) = H(\vec{e}\,) \setminus \Lambda G_e$ and $U(-\vec{e}\,) = H(-\vec{e}\,) \setminus \Lambda G_e$.
%The following result implies, in particular, that open half\-spaces are always open sets.
\end{defn}

\begin{proposition}[\emph{cf.}\ \cite{Bow98_JSJ}, \S 1]
\label{Prop: separation}
Let $(G,\bbp)$ be relatively hyperbolic with each $P\in \mathbb{P}$ infinite. Let $T$ be a minimal $(\mathbb{E}_1,\bbp)$--tree.
For each oriented edge $\vec{e}$ of $T$, the open halfspaces $U(\vec{e}\,)$ and $U(-\vec{e}\,)$ are a separation of $\relbndry \setminus \Lambda G_e$ into two disjoint, nonempty open sets. \end{proposition}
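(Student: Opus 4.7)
I would organize the argument into four parts---disjointness, covering, nonemptiness, and openness---and handle openness by observing that it follows automatically from the other three: since $H(\vec{e}\,)$ and $H(-\vec{e}\,)$ are closed in $M$ by definition, once we know they cover $M$ and meet precisely in $\Lambda G_e$, the sets $U(\vec{e}\,)$ and $U(-\vec{e}\,)$ are disjoint closed subsets of $M \setminus \Lambda G_e$ that partition it, and hence each is clopen in the subspace topology.

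For the covering property $H(\vec{e}\,) \cup H(-\vec{e}\,) = M$, I would note that the union $\bigcup_{v \in V(T)} \Lambda G_v$ is nonempty (each vertex stabilizer contains an infinite edge stabilizer), $G$--invariant (because $g \Lambda G_v = \Lambda G_{gv}$), and visibly contained in $H(\vec{e}\,) \cup H(-\vec{e}\,)$. Since $G$ acts on $M$ as a geometrically finite convergence group, either $G$ is elementary (in which case $T$ has no edges, so there is nothing to prove) or every $G$--orbit on $M$ is dense; in the latter case the closure of the union is all of $M$.

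The main obstacle is the disjointness assertion $H(\vec{e}\,) \cap H(-\vec{e}\,) \subseteq \Lambda G_e$, and I would tackle it using the projection $\pi = \alpha \circ \phi \colon X \to T$ constructed in the proof of Proposition~\ref{prop: rel qc}, fixing a basepoint $x_0 \in X$. Given $p$ in both halfspaces, pick sequences $p_n \to p$ and $q_n \to p$ with $p_n \in \Lambda G_{v_n}$ for $v_n \in T(\vec{e}\,)$ and $q_n \in \Lambda G_{u_n}$ for $u_n \in T(-\vec{e}\,)$, and let $\sigma_n$ be a bi-infinite geodesic in the $\delta$--hyperbolic cusped space $X$ from $p_n$ to $q_n$. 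A thin-triangle argument applied to the ideal triangle with vertices $p, p_n, q_n$ shows that every point of $\sigma_n$ lies within $O(\delta)$ of $[p, p_n] \cup [p, q_n]$; combined with the fact that $(p_n \mid p)_{x_0}$ and $(q_n \mid p)_{x_0}$ tend to infinity, this forces every point of $\sigma_n$ that escapes compact sets to converge to $p$ in the Gromov boundary. On the other hand, $\pi(\sigma_n)$ is a connected subset of $T$ passing arbitrarily close to both $v_n$ and $u_n$, so it must cross the midpoint $m_e$; hence some $y_n \in \sigma_n$ satisfies $\pi(y_n) = m_e$. By the estimate established inside the proof of Proposition~\ref{prop: rel qc}, $\pi^{-1}(m_e)$ lies in a uniformly bounded neighborhood of the $\kappa$--quasiconvex subspace $\check{\iota}\bigl(X(G_e, \mathbb{O}_e)\bigr)$, whose boundary at infinity is $\Lambda G_e$. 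Thus every subsequential limit of $y_n$ in $M$ lies in $\Lambda G_e$, and by the preceding observation it must equal $p$; this yields $p \in \Lambda G_e$.

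Finally, for nonemptiness of the open halfspaces I would invoke minimality of $T$, which guarantees that every edge lies on the axis of some element $g \in G$ acting hyperbolically on $T$. Since every $P \in \mathbb{P}$ fixes a vertex of $T$, the element $g$ cannot be parabolic in $G$, so $g$ is loxodromic in the convergence action on $M$ with two fixed points $g^+, g^- \in M$; the axis of $g$ crosses $e$, so $g^+$ and $g^-$ lie in opposite halfspaces. Neither fixed point lies in $\Lambda G_e$: if $G_e$ is loxodromic and $g$ shared both fixed points with it, then $\langle g, G_e \rangle$ would be virtually cyclic and would stabilize $e$, contradicting that $g$ acts hyperbolically on $T$; and if $G_e$ is parabolic, a loxodromic element of a geometrically finite convergence group cannot share a fixed point with a parabolic subgroup. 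Hence $g^+$ and $g^-$ witness that $U(\vec{e}\,)$ and $U(-\vec{e}\,)$ are both nonempty, and the coordination of the thin-triangle estimate with the tree-projection argument in the disjointness step is the piece I expect to require the greatest care.
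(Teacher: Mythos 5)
The central gap is in your disjointness step. You justify ``every subsequential limit of $y_n$ lies in $\Lambda G_e$'' by asserting that the boundary at infinity of $\check{\iota}\bigl(X(G_e,\mathbb{O}_e)\bigr)$ is $\Lambda G_e$, but this is false in general. In the construction inside the proof of Proposition~\ref{prop: rel qc}, the family $\mathbb{O}_e$ contains the \emph{finite} subgroups $O_1,\dots,O_k$ with $O_i=G_e\cap g_iP_ig_i^{-1}$, attached to peripheral cosets whose intersection with $\alpha^{-1}(m_e)$ is nonempty but bounded. The cusped space $X(G_e,\mathbb{O}_e)$ carries horoballs over cosets of these finite groups, and $\check{\iota}$ sends them into the horoballs of $X(G,\bbp)$ over the cosets $hg_iP_i$; likewise $\pi^{-1}(m_e)=\phi^{-1}\bigl(\alpha^{-1}(m_e)\bigr)$ contains entire vertical rays of those horoballs. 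Hence the accumulation set of $\pi^{-1}(m_e)$ (equivalently of the $\rho$--neighborhood of $\check{\iota}\bigl(X(G_e,\mathbb{O}_e)\bigr)$) in $\relbndry$ contains the parabolic fixed points of the conjugates $hg_iP_ig_i^{-1}h^{-1}$ with $G_e\cap g_iP_ig_i^{-1}$ finite, and none of these points lies in $\Lambda G_e$: if $G_e$ is parabolic, coincidence would force an infinite intersection with that maximal parabolic, and if $G_e$ is $2$--ended nonparabolic, its limit points are not parabolic. So your argument only places a common point $p$ of the two closed halfspaces in this strictly larger set, and excluding the extra parabolic points is not a routine patch --- it is essentially the assertion being proved. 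A secondary soft spot of the same nature: the claim that $\pi(\sigma_n)$ ``passes arbitrarily close to both $v_n$ and $u_n$, so it must cross $m_e$'' is not justified; knowing only $p_n\in\Lambda G_{v_n}$ places the shadow of that end of $\sigma_n$ in a uniformly bounded neighborhood of a coset of $G_{v_n}$, whose $\pi$--image is a bounded neighborhood of $v_n$ in $T$, which may contain $m_e$ and points of the opposite halftree, so you cannot conclude that the two ends map into opposite components of $T\setminus\{m_e\}$.

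For comparison, the paper's proof avoids all cusped-space estimates: it first converts nonparabolic $2$--ended edge stabilizers into parabolics by enlarging the peripheral structure to $\widehat{\mathbb{P}}$ (Dahmani's pinching), then passes to the tree of cylinders $T_c$ and applies Bowditch's peripheral splitting machinery (Proposition~\ref{prop:TreeOfSpaces}) to obtain $H(\vec{e}\,)\cup H(-\vec{e}\,)=\boundary(G,\widehat{\mathbb{P}})$ and $H(\vec{e}\,)\cap H(-\vec{e}\,)=\Lambda G_e$, and finally pulls back along the quotient map $\boundary(G,\mathbb{P})\to\boundary(G,\widehat{\mathbb{P}})$; nonemptiness comes from minimality together with acylindricity of $T_c$ (via Lemma~\ref{lem:acylindrical}), which guarantees a nonelementary vertex stabilizer in each halftree. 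Your covering step is fine, and your nonemptiness step is workable (though you should also verify that $g^{+}$ and $g^{-}$ actually lie in the respective closed halfspaces, for instance by translating a point of $\Lambda G_{v_0}\setminus\{g^{-}\}$ along the axis), but the disjointness argument needs to be reworked, either along the paper's lines or with a substantially finer analysis of geodesics near $\pi^{-1}(m_e)$ that accounts for the bounded coset intersections.
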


\begin{proof}
Each nonparabolic $2$--ended edge group of $T$ stabilizes a pair of points.
Let $\mathbb{C}$ be a finite set of representatives of the conjugacy classes of $G$--stabilizers of such pairs of points.
Then each member of $\mathbb{C}$ is a nonparabolic maximal $2$--ended subgroup of $G$ (see \cite[Thm.~2I]{Tuk94}).  Let $\widehat{\mathbb{P}}$ equal the union $\mathbb{P} \cup \mathbb{C}$.
The group pair $(G,\widehat{\mathbb{P}})$ is relatively hyperbolic, and its Bowditch boundary $\boundary(G,\widehat{\mathbb{P}})$ is formed from $\boundary(G,\mathbb{P})$ by collapsing to a point each limit set of a nonparabolic edge group of $T$ by \cite{Dahm03} (see
\cite{Osin06Elementary} and \cite{Yang14_Peripheral} for corrections).
We proceed by considering $T$ as a splitting over infinite, finitely generated $\widehat{\mathbb{P}}$--parabolic subgroups relative to $\widehat{\mathbb{P}}$.

Consider the tree of cylinders $T_c$ of $T$.
Since $T_c$ is equal to its own tree of cylinders, two cylinders are adjacent in $T$ if and only if the corresponding cylinders are adjacent in $T_c$.
Let $v(e)\in \mathcal{V}$ denote the cylinder whose equivalence class contains the edge $e$.
For each oriented edge $\vec{e}$, the tree of cylinders $T_c$ is the union of two subtrees $S(\vec{e}\,)$ and $S(-\vec{e}\,)$ with intersection $v(e)$ that have the same set of $\mathcal{W}$--vertices as the halftrees $T(\vec{e}\,)$ and $T(-\vec{e}\,)$ respectively.
It follows from Proposition~\ref{prop:TreeOfSpaces} that in $\boundary(G,\widehat{\mathbb{P}})$ the closed halfspaces
\[
   H(\vec{e}\,) = \overline{\bigcup_{v \in \mathcal{W}(S(\vec{e}\,))} \!\!\!\!\Lambda G_v}
   \qquad\text{and}\qquad
   H(-\vec{e}\,) = \overline{\bigcup_{v \in \mathcal{W}(S(-\vec{e}\,))} \!\!\!\!\Lambda G_v}
\]
satisfy $H(\vec{e}\,)\cup H(-\vec{e}\,) = \boundary(G,\widehat{\mathbb{P}})$ and $H(\vec{e}\,)\cap H(-\vec{e}\,) =\Lambda G_{v(e)} = \Lambda G_e$.  Therefore the corresponding open halfspaces are disjoint open sets.

To see that each open halfspace $U(\vec{e}\,)$ of $\boundary(G,\widehat{\mathbb{P}})$ is nonempty, observe that the minimal tree $T$ cannot contain a vertex of valence one, so the halftree $T(\vec{e}\,)$ is unbounded.
As $T_c$ is acylindrical, it follows from Lemma~\ref{lem:acylindrical} that $T$ is also acylindrical.
Thus $T(\vec{e}\,)$ contains more than one cylinder.  In particular it contains at least one vertex with nonelementary stabilizer, which implies that $U(\vec{e}\,)$ is nonempty.

The preimages of the open halfspaces under the natural quotient $\boundary(G,{\mathbb{P}}) \to \boundary(G,\widehat{\mathbb{P}})$ are the open halfspaces of $\boundary(G,\mathbb{P})$ corresponding to $e$, which are again disjoint, nonempty, and open.
\end{proof}

\begin{corollary}
\label{cor:ConnectedHalfspaces}
Let $(G,\bbp)$ be relatively hyperbolic with connected boundary $M=\boundary(G,\bbp)$.  Let $T$ be a minimal $(\mathbb{E}_1,\bbp)$--tree.
Then the closed halfspaces $H(\vec{e}\,)$ and $H(-\vec{e}\,)$ of $M$ corresponding to any oriented edge $\vec{e}$ of $T$ are connected.
\end{corollary}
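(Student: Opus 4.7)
The plan is to identify $H(\vec{e}\,)$ with the closure $\overline{U(\vec{e}\,)}$ of the corresponding open halfspace, and then to exhibit that closure as a union of connected subsets sharing the finite set $\Lambda G_e$. Since a union of connected sets with a nonempty common intersection is connected, this will finish the proof for $H(\vec{e}\,)$; the argument for $H(-\vec{e}\,)$ is identical.

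First I would prove the density identity $\overline{U(\vec{e}\,)} = H(\vec{e}\,)$. For $p \in \Lambda G_e$, choose a vertex $v$ of $T(\vec{e}\,)$ incident to $e$; then $G_e \le G_v$, so $p \in \Lambda G_v \subseteq H(\vec{e}\,)$. Because $G_v$ is infinite, its limit set has no isolated points, so $p$ is an accumulation point of $\Lambda G_v$ and, since $\Lambda G_e$ is finite, also of $\Lambda G_v \setminus \Lambda G_e \subseteq U(\vec{e}\,)$. This yields $\Lambda G_e \subseteq \overline{U(\vec{e}\,)}$, which combined with $U(\vec{e}\,) \subseteq H(\vec{e}\,)$ (a closed set) gives $\overline{U(\vec{e}\,)} = H(\vec{e}\,)$.

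Next I would use Proposition~\ref{Prop: separation} to conclude that $\Lambda G_e$ is a separating set of $M$, and split into two cases depending on whether $G_e$ is parabolic or nonparabolic $2$--ended. If $G_e$ is parabolic, then $\Lambda G_e = \{p\}$ is by definition a cut point; in any Peano continuum, each component $C$ of $M \setminus \{p\}$ satisfies $\overline{C} = C \cup \{p\}$. If instead $G_e$ is nonparabolic $2$--ended, then $\Lambda G_e = \{p_1, p_2\}$ consists of conical limit points, which by Theorem~\ref{thm:BoundaryConnectedness}(\ref{item:LocallyConnected}) are not cut points of $M$. Therefore $\{p_1, p_2\}$ is a genuine cut pair, and by \cite[Cor.~3.4]{Gur} the complement $M \setminus \{p_1, p_2\}$ has only finitely many components, each satisfying $\overline{C} = C \cup \{p_1, p_2\}$.

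In either case, $U(\vec{e}\,)$ is a union of components $\{C_\alpha\}$ of $M \setminus \Lambda G_e$, and the equality $H(\vec{e}\,) = \overline{U(\vec{e}\,)} = \bigcup_\alpha \overline{C_\alpha}$ follows from the sandwich $U(\vec{e}\,) \cup \Lambda G_e \subseteq \bigcup_\alpha \overline{C_\alpha} \subseteq \overline{U(\vec{e}\,)}$. Each $\overline{C_\alpha}$ is connected (being the closure of a connected set) and contains the entirety of $\Lambda G_e$, so $H(\vec{e}\,)$ is a union of connected sets with nonempty common intersection, hence connected. The step most requiring care is the density identity, where one must be attentive that $\Lambda G_v \setminus \Lambda G_e$ (rather than merely $\Lambda G_v$) accumulates at each $p \in \Lambda G_e$; beyond that, the argument is a straightforward combination of Proposition~\ref{Prop: separation} with the structural result on cut pairs from \cite{Gur}.
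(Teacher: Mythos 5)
Your route does reach the conclusion, granted the results quoted in Section~\ref{sec:RelativelyHyperbolic}, but two points need attention. First, the step you yourself single out as the delicate one is argued incorrectly: the claim that $\Lambda G_v$ has no isolated points because $G_v$ is infinite is false when $G_v$ is elementary, and vertex stabilizers of an $(\mathbb{E}_1,\mathbb{P})$--tree are frequently elementary (every $P\in\mathbb{P}$ is elliptic). In particular the vertex of $T(\vec{e}\,)$ incident to $e$ may be stabilized by a peripheral subgroup with $\Lambda G_v=\Lambda G_e$ a single point, in which case $\Lambda G_v\setminus\Lambda G_e=\emptyset$ and your accumulation argument yields nothing. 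Fortunately that paragraph is redundant: your later sandwich already gives $\Lambda G_e\subseteq\overline{C_\alpha}\subseteq\overline{U(\vec{e}\,)}$ for any single component $C_\alpha$ (Proposition~\ref{Prop: separation} guarantees $U(\vec{e}\,)\neq\emptyset$), so the density identity follows from the closure facts about complementary components with no dynamical input; the first paragraph should simply be deleted.

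Second, your argument is genuinely different from the paper's, and in a way the paper deliberately avoids. The paper's proof in the parabolic case is a short clopen argument: if $H(\vec{e}\,)=A\cup B$ with $A,B$ disjoint, closed, and $\Lambda G_e\subseteq A$, then $B\subseteq U(\vec{e}\,)$, so $M\setminus B=A\cup H(-\vec{e}\,)$ is closed, making $B$ clopen in the connected space $M$ and hence empty; the nonparabolic $2$--ended case is handled by an argument of Bowditch \cite{Bow99_Connectedness}. This uses only connectedness of $M$ together with Proposition~\ref{Prop: separation}. Your proof instead relies on local connectedness: the Peano continuum facts about components complementary to cut points and cut pairs (\cite[Cor.~3.4]{Gur}) and the statement that nonparabolic points are never cut points (Theorem~\ref{thm:BoundaryConnectedness}(\ref{item:LocallyConnected}), due to Dasgupta). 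As remarked in Section~\ref{sec: simplicial}, Corollary~\ref{cor:ConnectedHalfspaces} is itself an ingredient in Dasgupta's proof that connected Bowditch boundaries are locally connected, so a proof of the corollary invoking local connectedness is circular at the level of the literature, even though it is a formally valid deduction from the theorems quoted in this paper. The paper's approach buys exactly this: independence from local connectedness, at the modest cost of quoting Bowditch's connectedness argument for the $2$--ended case.
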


\begin{proof}
Recall that if the boundary is connected, then each peripheral subgroup must be infinite (see Theorem~\ref{thm:BoundaryConnectedness}).
By Proposition~\ref{Prop: separation}, the closed halfspaces $H(\vec{e}\,)$ and $H(-\vec{e}\,)$ associated to $\vec{e}$ are closed $G_e$--invariant sets with union $M$ and intersection $\Lambda G_e$.
Consider the case that $G_e$ is parabolic with $\Lambda G_e = \{\eta\}$.
Suppose $H(\vec{e}\,) = A \cup B$ for disjoint sets $A$ and $B$ that are closed in $H(\vec{e}\,)$, and hence also in $M$. If $\eta \in A$ then $B \subseteq U(\vec{e}\,)$.  Therefore $M\setminus B = A \cup H(-\vec{e}\,)$ is a closed set of $M$.  Since $M$ is connected, we must have $B=\emptyset$, so that $H(\vec{e}\,)$ is connected.

The nonparabolic, $2$--ended case follows by an argument nearly identical to the proof of \cite[Thm.~0.1]{Bow99_Connectedness}.
%We will see that $H(\vec{e}\,)$ is connected.  Indeed, let $K$ be any connected component of $H(\vec{e}\,)$. If $K \cap \Lambda G_e$ is empty, then $H(\vec{e}\,)$ contains a closed and open subset $L$ containing $K$ and disjoint from $\Lambda G_e$. Thus $L$ is closed and open in $M$, contradicting the fact that $M$ is connected.  Therefore any component $K$ of $H(\vec{e}\,)$ must intersect $\Lambda G_e$ in a point, say $a$. Let $\widehat{G}_e$ be the subgroup of $G_e$ of index at most two that fixes $a$. Since $K$ is $\widehat{G}_e$--invariant, either $\Lambda G_e \subseteq K$ or $K=\{a\}$. In the first case, it follows that $H(\vec{e}\,)$ equals the connected set $K$ as desired. In the second case, a similar argument shows that $\{b\}$ is a component of $H(\vec{e}\,)$, so that $H(\vec{e}\,) = \Lambda G_e$. Thus the open halfspace $U(\vec{e}\,)$ is empty, contradicting Proposition~\ref{Prop: separation}.
\end{proof}

\begin{proposition}
\label{prop:PieceInVertexSpace}
Let $(G,\bbp)$ be relatively hyperbolic with each $P \in \mathbb{P}$ infinite. Let $T$ be a minimal $(\mathbb{E}_1,\bbp)$--tree on which $G$ acts. 
Suppose $C\subseteq \relbndry$ is a subset such that for each edge $e$, the set $C$ lies in one of the open halfspaces corresponding to $e$.
If $C$ contains more than one point then $C$ is contained in $\Lambda G_v$ for some vertex $v$ of $T$.
\end{proposition}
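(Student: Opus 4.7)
For each edge $e$ of $T$, let $\vec{e}$ be the unique orientation with $C\subseteq U(\vec{e}\,)$ (unique because the two open halfspaces associated to $e$ are disjoint by Proposition~\ref{Prop: separation}, and $C$ is nonempty). The plan is to show that the halftrees $\{T(\vec{e}\,)\}_e$ share a common vertex $v\in V(T)$, at which $C\subseteq\Lambda G_v$. First I verify pairwise intersection: if $T(\vec{e}\,)\cap T(\vec{f}\,)=\emptyset$, then $T(\vec{e}\,)\subseteq T(-\vec{f}\,)$, giving $H(\vec{e}\,)\subseteq H(-\vec{f}\,)$ and hence $C\subseteq U(\vec{e}\,)\cap U(\vec{f}\,)\subseteq H(-\vec{f}\,)\cap U(\vec{f}\,)=\emptyset$, contradicting $|C|\geq 2$. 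A standard Helly-type property for subtrees of a tree then implies that the $T(\vec{e}\,)$ share either a common vertex of $T$ or converge to a common end.

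To rule out the common-end case, I would choose a ray $v_0,v_1,\ldots$ in $T$ representing the end with $e_i=[v_{i-1},v_i]$, observing that the halfspaces $H(\vec{e_i}\,)$ form a nested decreasing sequence. After enlarging $\mathbb{P}$ to include the finitely many conjugacy classes of $2$--ended edge stabilizers of $T$ (as in the proof of Proposition~\ref{Prop: separation}), each halfspace equals $\Phi(S_i)$ for a shrinking sequence of subtrees $S_i$ of the tree of cylinders $T_c$, and Proposition~\ref{prop:TreeOfSpaces} reduces the intersection $\bigcap_i H(\vec{e_i}\,)$ to the single ideal point $\phi(\bar\xi)$ associated to the corresponding end of $T_c$. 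Since $C\subseteq\bigcap_i H(\vec{e_i}\,)$, this forces $|C|\leq 1$, contrary to the hypothesis.

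In the remaining case, let $v\in V(T)$ be a common vertex, so $\tau(\vec{e}\,)=v$ for every edge $e$ adjacent to $v$. I claim $C\subseteq\Lambda G_v$. Given $\eta\in C$, suppose for contradiction that $\eta\notin\Lambda G_v$. If $\eta\in\Lambda G_u$ for some vertex $u\neq v$ of $T$, the first edge $f$ of the $T$--geodesic from $v$ to $u$ is adjacent to $v$, and $u\in T(-\vec{f}\,)$ yields $\eta\in\Lambda G_u\subseteq H(-\vec{f}\,)$, contradicting $\eta\in U(\vec{f}\,)\cap H(-\vec{f}\,)=\emptyset$. Otherwise $\eta$ lies in no vertex limit set, and hence in no edge limit set (since $\Lambda G_e\subseteq\Lambda G_{\iota(e)}$); its image $\eta'$ in $\boundary(G,\widehat{\mathbb{P}}\,)$ is therefore uncollapsed and avoids every $\Lambda G_w$ for $w\in\mathcal{W}(T_c)$, so by Proposition~\ref{prop:TreeOfSpaces} it must be an ideal point $\phi(\bar\xi)$. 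The end $\bar\xi$ of $T_c$ is represented by a ray through $v$ (if $v\in\mathcal{W}(T_c)$) or through the unique cylinder containing $v$ otherwise, and the first edge $f$ of the lifted ray in $T$ is adjacent to $v$; since the tail of this ray lies in $T(-\vec{f}\,)$, the ideal point $\eta'$ belongs to $H(-\vec{f}\,)$, yielding the same contradiction. The main obstacle will be this last step, which requires a careful identification of the first $T$--edge of the lifted ray, with a case split on whether $v$ lies in $\mathcal{W}(T_c)$.
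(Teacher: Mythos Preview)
Your overall strategy matches the paper's: orient each edge toward $C$, establish the dichotomy between a common vertex and a ray/end, and in the ray case use the tree-of-spaces structure from Proposition~\ref{prop:TreeOfSpaces} (after passing to $\boundary(G,\widehat{\mathbb{P}})$) to see that the nested intersection of closed halfspaces is a single ideal point, contradicting $|C|\ge 2$.

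The difference lies in the common-vertex case. The paper argues directly: once a vertex $v$ lies in every $T(\vec{e}\,)$, the proof of Proposition~\ref{Prop: separation} together with Proposition~\ref{prop:TreeOfSpaces} gives $\bigcap_{\vec{e}\in\sigma} H(\vec{e}\,)=\Lambda G_v$ in $\boundary(G,\widehat{\mathbb{P}})$, and this pulls back to $\boundary(G,\mathbb{P})$; since $C\subseteq\bigcap U(\vec{e}\,)\subseteq\bigcap H(\vec{e}\,)$, the conclusion is immediate. You instead assume $\eta\in C\setminus\Lambda G_v$ and split into sub-cases. Sub-case~1 is fine. In sub-case~2, however, the phrase ``the first edge $f$ of the lifted ray in $T$'' is not well defined: there is no canonical map $T_c\to T$ along which to lift rays, and edges of $T_c$ do not correspond to edges of $T$. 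What actually works is to pick any $\mathcal{W}$-vertex $w\neq v$ on the $T_c$-ray to $\bar\xi$ (such $w$ exists since $T_c$ is bipartite and the ray is unbounded), let $f$ be the first edge of the $T$-geodesic from $v$ to $w$, and then use $S(\vec{f}\,)\cap S(-\vec{f}\,)=\{v(f)\}$ to see that the $T_c$-ray, once past $v(f)$, stays in $S(-\vec{f}\,)$, whence $\bar\xi\in\partial S(-\vec{f}\,)$ and $\eta'\in H(-\vec{f}\,)$. This repairs the argument, but it is circuitous: you are effectively re-deriving the containment $\bigcap_{f} H(\vec{f}\,)\subseteq\Lambda G_v$ that the paper gets in one line from the tree-of-spaces description. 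Adopting the paper's direct computation also eliminates your anticipated case split on whether $v\in\mathcal{W}(T_c)$.
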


%The proof is based on a standard argument in the cubulation of wallspaces (see, for instance, \cite{Roller_PocSets}).

\begin{proof}
Let $\sigma$ be the set of all oriented edges $\vec{e}$ such that the open halfspace $U(\vec{e}\,)$ contains $C$.
We think of $\sigma$ as orienting the edges ``towards $C$.''
Note that two such orientations do not point away from each other, since the corresponding open halfspaces both contain $C$ and thus must have nonempty intersection.
In particular, at each vertex $v \in T$ at most one incident edge can be oriented away from $v$.

Suppose there exists a vertex $v$ that is contained in $T(\vec{e}\,)$ for all $\vec{e} \in \sigma$.
The proof of Proposition~\ref{Prop: separation} shows that in the quotient boundary $\boundary(G,\widehat{\mathbb{P}})$ the intersection of closed halfspaces $\bigcap_{\vec{e}\in\sigma} H(\vec{e}\,)$ is equal to $\Lambda G_v$.
Pulling back via the natural map $\boundary(G,\mathbb{P}) \to \boundary(G,\widehat{\mathbb{P}})$ gives the same conclusion in $\boundary(G,\mathbb{P})$.  Therefore $C \subseteq \Lambda G_v$.

On the other hand, if $T$ contains no such vertex, then $\sigma$ contains a ``ray,'' \emph{i.e.}, a sequence of oriented edges $\vec{e}_1,\vec{e}_2,\dots$ such that $\tau(\vec{e}_i)=\iota(\vec{e}_{i+1})$ and $\iota(\vec{e}_i)\ne\tau(\vec{e}_{i+1})$.
The proof of Proposition~\ref{Prop: separation} shows that in $\boundary(G,\widehat{\mathbb{P}})$
the intersection of the closed halfspaces $H(\vec{e}_i)$ is a single ideal point.
Pulling back via the quotient map $\boundary(G,\mathbb{P}) \to \boundary(G,\widehat{\mathbb{P}})$, we get the same conclusion in $\boundary (G,\mathbb{P})$.
This conclusion contradicts the fact that $C$ contains more than one point and is contained in this intersection.
\end{proof}

%%%%%%%%%%%%%%%%%%%%%%%%%%%%%%%%%%%%%%%%%%%%
\section{Parabolic cut pairs and the proof of \cite[Thm.~4.6]{Grof13}}
\label{sec:Levitt}
%%%%%%%%%%%%%%%%%%%%%%%%%%%%%%%%%%%%%%%%%%%%

This section contains a discussion of two mistakes involving results in \cite[Thm.~4.6]{Grof13}. The first mistake was discovered by the second author and Genevieve Walsh in work related to \cite{HruskaWalsh_Parabolic}.
The authors learned about the second mistake through personal conversation with Brian Bowditch.

The first mistake is that the statement of \cite[Thm.~4.6]{Grof13} depends on an incorrect assertion of Papasoglu--Swenson \cite[\S 5]{PS06} regarding the cut point and cut pair structure of an arbitrary Peano continuum $M$.
As explained below, this assertion is false in general, even in the special case that $M$ is the Bowditch boundary of a relatively hyperbolic pair (as in \cite[Thm.~4.6]{Grof13}).
In particular, \cite[Thm.~4.6]{Grof13} makes an assertion about an object that need not exist under the given hypotheses.
See Example~\ref{exmp:ParabolicCutPairs} for a counterexample.

In \cite[\S 5]{PS06}, Papasoglu--Swenson suggest that one might construct a topological real tree $T$ encoding simultaneously the
cut point structure of a Peano continuum $M$ and the cut pair structure of each of its cyclic elements.  They first construct a canonical topological $\mathbb{R}$--tree $T_0$ dual to the family of cut points of $M$ (see \cite[\S 3]{PS06}); each cyclic element $C$ of $M$ gives rise to a point $p_C$ of $T_0$.
They then ``blow up'' $T_0$ by replacing each point $p_C$ with a copy of a canonical $\mathbb{R}$--tree $T_C$ dual to the family of all inseparable cut pairs of $C$, regardless of exactness (see \cite[\S 4]{PS06}).  In the terminology of \cite[App.~A]{GL_JSJ} the blow-up is a refinement and the map $T \to T_0$ is a collapse.
Papasoglu--Swenson assert that such a refinement $T$ exists with the property that any group $G$ of homeomorphisms of $M$ has an induced action on $T$.

Unfortunately this assertion turns out to be incorrect, since one can only refine $T_0$ equivariantly if each cyclic element $C$ of $M$ has the following property:
for each component of $M-C$ with stabilizer $Q \le \Stab_G(C)$ there exists an attaching point $x_Q\in \overline{T}_C = T_C \cup \boundary T_C$ fixed by $Q$.
It is shown (correctly) in \cite[\S 5]{PS06} that the cyclic element $C$ must contain a point $y_Q$ fixed by $Q$.
The mistake in \cite[\S 5]{PS06} is that in general the action of $Q$ on $\overline{T}_C$ need not have a fixed point.
The construction of $T_C$ in \cite[\S 4]{PS06} associates to $y_Q$ a subtree $T_Q \subseteq T_C$ dual to the family of all inseparable cut pairs that contain $y_Q$.
If $y_Q$ is contained in many different inseparable cut pairs, there need not exist a single cut pair of $C$ stabilized by $Q$.  Consequently $\overline{T}_C$ need not contain a point fixed by $Q$.
(A counterexample is given below.)
%In Example~\ref{exmp:ParabolicCutPairs} below, the unique point $y_Q$ of $C$ fixed by $Q$ lies in infinitely many different inseparable cut pairs, and this family of cut pairs corresponds to a $Q$--invariant subtree of $T_C$ on which $Q$ acts without a fixed point.
Thus there does not exist a tree $T$ with a $G$--action that collapses onto $T_0$ such that the preimage of each point of type $p_C$ is $\Stab(C)$--equivariantly homeomorphic to the tree $\overline{T}_C$.
The main result of \cite[\S 5]{PS06} asserts that such a tree exists, but this assertion is not true in general.

The following example based on a construction of Gaboriau--Paulin \cite[\S 3.4, Example~1]{GaboriauPaulin01} shows that the required fixed point $x_Q$ does not exist in general.
See \cite[Prop.~3.4]{HruskaWalsh_Parabolic} for a more detailed discussion of this example.

\begin{example}[Parabolic cut pairs]
\label{exmp:ParabolicCutPairs}
Consider the group $H = F_1 * F_2 *F_3$, where each $F_i=\Z/3\Z$, and let $\mathbb{Q} = \{F_1*F_2,\ F_1*F_3,\ F_2*F_3\}$.
Since $\mathbb{Q}$ is an almost malnormal family, $(H,\mathbb{Q})$ is relatively hyperbolic by a theorem of Bowditch \cite[Thm.~7.11]{BowditchRelHyp}. Furthermore, $H$ does not split as a graph of groups relative to $\mathbb{Q}$ by Serre's Lemma \cite[\S I.6.5, Cor.~2]{Serre}.
By Theorem~\ref{thm:BoundaryConnectedness}, the boundary $C$ is connected and locally connected and every cut point of $C$ is a parabolic point.  Since $H$ does not split relative to $\mathbb{Q}$, it follows from \cite[Thm.~9.2]{Bow01_Peripheral} that $C$ has no cut points.
By \cite{Haulmark_LocalCutPt}, all local cut points of $C$ are parabolic.
Thus every cut pair of $C$ consists of a pair of parabolic points.

In \cite[Prop.~3.4]{HruskaWalsh_Parabolic}, it is shown that $C$ does have a nonempty family of parabolic inseparable cut pairs, and the inseparable cut pair tree $T_C$ of \cite[\S 4]{PS06} is a simplicial tree $T_C$ in which all vertices have valence $3$.
Furthermore, the group $H$ acts minimally on this tree, each edge has trivial stabilizer, and the stabilizer of each vertex is either trivial or of order three.
On the other hand, each virtually free peripheral subgroup $Q\in \mathbb{Q}$ acts properly and cocompactly on a subtree $T_Q$ such that action of $Q$ on $\overline{T}_Q$ has no fixed point.
In particular, since $Q$ does have a fixed point in $C$, we note that there does not exist an $H$--equivariant map $C \to \overline{T}_C$.

Given the existence of the relatively hyperbolic pair $(H,\mathbb{Q})$, one can easily create relatively hyperbolic pairs such that the cut-point/cut-pair tree construction fails, as follows.
Let $G$ be the fundamental group of the following graph of groups:
\[
\xymatrix{
  &  H \ar@{-}[ld]_{F_1*F_2} \ar@{-}[d]^{F_1*F_3} \ar@{-}[rd]^{F_2*F_3} & \\
(F_1*F_2)\times \Z & (F_1*F_3)\times \Z & (F_2*F_3)\times\Z
}
\]
By the combination theorem of Dahmani \cite{Dahm03}, the group $G$ is hyperbolic relative to
\[
   \mathbb{P} = \bigl\{ (F_1*F_2)\times \Z, \ (F_1*F_3)\times \Z, \ (F_2*F_3)\times \Z \bigr\}.
\]
Recall that $H$ does not admit a splitting relative to $\mathbb{Q}$.
Since $C$ is locally connected, 
a result of Bowditch \cite{Bow01_Peripheral} implies that the boundary $M = \boundary (G,\mathbb{P})$ is locally connected, the nontrivial cyclic elements of $M$ are each homeomorphic to $C$, and $M$ has a simplicial cut point tree $T_0$ equal to the Bass--Serre tree of the given splitting of $G$.

In this situation, the refinement claimed to exist in \cite{PS06} cannot exist because the edge stabilizers of $T_0$ do not have fixed points in $\overline{T}_C$, so that the required attaching point $p_Q \in \overline{T}_C$ does not exist.

We remark that the group $G$ constructed above is finitely presented and one ended, has no infinite torsion subgroup, and has the property that all peripheral subgroups are non--relatively hyperbolic. 
Thus it satisfies the hypothesis of \cite[Thm.~4.6]{Grof13}.
\end{example}

In some cases, the cut-point/cut-pair topological real tree $T$ may exist (in the case that all required attaching points exist).
A second error in the proof of \cite[Thm.~4.6]{Grof13} involves a mistaken application of a theorem due to Levitt.
The main theorem of Levitt's article \cite{Levitt98} states that whenever a finitely presented group $G$ admits a nontrivial non-nesting action by homeomorphisms on a topological real tree $T$, the group $G$ admits a nontrivial isometric action on a metric $\R$--tree $T_m$ such that the stabilizer of each arc in $T_m$ also stabilizes an arc in $T$.

The proof of \cite[Thm.~4.6]{Grof13} depends on an unjustified assertion that the metric tree $T_m$ is a $G$--equivariant quotient of the topological tree $T$.
A careful reading of Levitt's construction reveals that in general there need not be a $G$--equivariant map $T\to T_m$.
Indeed if one applies Levitt's construction to an isometric action that is nongeometric in the sense of \cite{LevittPaulin97} then such an equivariant map cannot exist because the given action does not admit an exact resolution (see \cite{LevittPaulin97} for details).
Based on Groff's analysis, it is unclear whether the Papasoglu--Swenson cut pair/cut point tree for a relatively hyperbolic group is geometric in the above sense (in the case when it exists at all).

In the next section we introduce a different tree dual to the family of exact inseparable cut pairs and cut points of the boundary.  The analysis of this alternative tree involves neither Rips Theory nor the theorem of Levitt mentioned above.

\section{The exact cut pair/cut point tree}
\label{sec: simplicial}
%%%%%%%%%%%%%%%%%%%%%%%%%%%%%%%%%%%%%%%%%%%%

Suppose $(G,\bbp)$ is relatively hyperbolic with connected boundary $M$.
In this section we introduce a simplicial tree dual to the family of cut points and inseparable exact cut pairs in $M$, called the exact cut pair/cut point tree of $M$.

According to Proposition~\ref{Prop: separation}, each elementary splitting of a relatively hyperbolic groups gives rise to a family of cuts of the boundary; that is, the boundary is cut into pieces by the limit sets of the edge groups.
More precisely, a parabolic edge group gives rise to a cut point, and a nonparabolic $2$--ended edge group gives rise to a cut pair, which is always an exact cut pair by Lemma~\ref{lem: 2ended are exact} together with Corollary~\ref{cor:SplittingExact} in this section.

In this section we prove Theorem~\ref{Thm: construction of T}, which establishes the existence of the exact cut pair/cut point tree $T$ for any connected Bowditch boundary. 
We also establish Proposition~\ref{prop: vertex types}, which classifies the possible vertex stabilizers of the induced action on the tree $T$.

As in Section~\ref{sec: trees separate}, a pinching argument due to Dahmani \cite{Dahm03} may be used to convert loxodromic cut pairs into parabolic cut points.
Thus the construction of the tree $T$ is reduced to the study of cut points in the  boundary, which is known by the following result, which combines Theorem~\ref{thm:BoundaryConnectedness} with \cite[Thm.~9.2]{Bow01_Peripheral}.

\begin{theorem}[Bowditch, Dasgupta]
\label{thm: peripheral splittings}
Let $(G,\bbp)$ be relatively hyperbolic with connected boundary $M=\bndry(G,\bbp)$.
Let $T$ be the bipartite graph with vertex set $\mathcal{V} \sqcup \mathcal{W}$, where $\mathcal{V}$ is the set of cut points and $\mathcal{W}$ is the set of nontrivial cyclic elements of $M$.  Two vertices $v \in \mathcal{V}$ and $w \in \mathcal{W}$ are connected by an edge in $T$ if and only if the cut point $v$ is contained in the cyclic element $w$.

The graph $T$ is a JSJ tree for splittings of $G$ over parabolic subgroups relative to $\bbp$.
The edges of $T$ lie in finitely many $G$--orbits and the stabilizer of each edge is finitely generated.
\end{theorem}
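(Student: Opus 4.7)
The plan is to assemble three already-cited ingredients: Kuratowski's theory of cyclic elements in a Peano continuum, Theorem~\ref{thm:BoundaryConnectedness} on the structure of cut points, and Bowditch's peripheral splitting theorem \cite[Thm.~9.2]{Bow01_Peripheral} (extended to the general case by Dasgupta). Most of the work is done in the cited results; what remains is to check that the bipartite graph $T$ as defined here coincides with the natural simplicial model of the peripheral splitting and to verify the two finiteness claims.

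First I would confirm that $T$ is a simplicial tree. By Theorem~\ref{thm:BoundaryConnectedness}(\ref{item:LocallyConnected}), $M$ is a Peano continuum, so the Kuratowski facts recalled in Section~\ref{subsec:CutPoints} apply: $M$ is the union of its cyclic elements, and any two distinct cyclic elements meet in at most one point, which is necessarily a cut point. The former gives connectedness of the bipartite graph $T$, and the latter rules out cycles. Next I would observe that $T$ is a $(\mathbb{E},\bbp)$--tree: by Theorem~\ref{thm:BoundaryConnectedness}(\ref{item:LocallyConnected}) every cut point of $M$ is parabolic, so the stabilizer of each $\mathcal{V}$--vertex, and hence of each edge, is parabolic. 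Each $P\in\bbp$ fixes a unique point of $M$; that point is either a cut point, yielding a fixed $\mathcal{V}$--vertex, or a non--cut point, yielding a unique fixed cyclic element by Lemma~\ref{lem:CutPair}, hence a fixed $\mathcal{W}$--vertex. Thus $P$ always acts elliptically.

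The JSJ assertion, which is the heart of the theorem, is then essentially the content of \cite[Thm.~9.2]{Bow01_Peripheral}: Bowditch shows that the cyclic-element decomposition of $M$ is dual to a canonical peripheral splitting that is universally elliptic among parabolic $(\mathbb{E},\bbp)$--trees and dominates any other such tree. Universal ellipticity is immediate since each parabolic edge group fixes a parabolic point and hence acts elliptically on every $(\mathbb{E},\bbp)$--tree. The domination statement is the principal technical content of \cite[Thm.~9.2]{Bow01_Peripheral} and would be the main obstacle if reproved from scratch; I would simply invoke it, noting that local connectivity of $M$, required by Bowditch, holds by Theorem~\ref{thm:BoundaryConnectedness}(\ref{item:LocallyConnected}).

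Finally, for the finiteness statements: since $\bbp$ is a finite family of finitely generated subgroups and each $P\in\bbp$ fixes a unique point of $M$, the parabolic fixed points, and in particular the $\mathcal{V}$--vertices of $T$, lie in finitely many $G$--orbits, each with finitely generated stabilizer. For each cut point $v$, the stabilizer $\Stab(v)$ is a conjugate of some $P\in\bbp$ and acts properly cocompactly on $M\setminus\{v\}$ by \cite{BowditchRelHyp}; hence it permutes the cyclic elements containing $v$ with finitely many orbits, giving finitely many $G$--orbits of edges. Each edge stabilizer is the intersection of the parabolic $\Stab(v)$ with the vertex stabilizer $\Stab(w)$, which is relatively quasiconvex since it is a stabilizer of a nontrivial cyclic element, so finite generation follows from the general intersection result recalled in Proposition~\ref{prop:IntersectionQC}.
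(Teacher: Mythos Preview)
Your approach matches the paper's: Theorem~\ref{thm: peripheral splittings} is stated in the paper as a known result ``which combines Theorem~\ref{thm:BoundaryConnectedness} with \cite[Thm.~9.2]{Bow01_Peripheral},'' and no further proof is given. Your sketch is a reasonable elaboration of precisely that citation, and the outline (Peano continuum structure $\Rightarrow$ $T$ is a tree; cut points parabolic $\Rightarrow$ edge groups parabolic; Bowditch's maximal peripheral splitting gives the JSJ property) is correct.

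There is one genuine gap in the final paragraph. You deduce finite generation of edge stabilizers from Proposition~\ref{prop:IntersectionQC}, but that proposition only asserts that the intersection of two relatively quasiconvex subgroups is again relatively quasiconvex and describes its limit set; it says nothing about finite generation. Indeed, as the paper notes just before Proposition~\ref{prop:IntersectionQC}, relative quasiconvexity does not entail finite generation in general. You also assert without justification that $\Stab(w)$ is relatively quasiconvex; this is true, but it is itself a consequence of Bowditch's analysis in \cite{Bow01_Peripheral} (see Proposition~\ref{prop: reduction}(\ref{prop: red2})) rather than something available a priori. Both the relative quasiconvexity of the cyclic-element stabilizers and the finite generation of the edge groups are established directly in \cite{Bow01_Peripheral} as part of the peripheral splitting theorem, so the cleanest fix is simply to absorb these claims into the citation of \cite[Thm.~9.2]{Bow01_Peripheral} rather than attempting to re-derive them from Proposition~\ref{prop:IntersectionQC}.
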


We first need a proposition which allows us to apply the results of Section~\ref{sec:NoCutPoint} to the nontrivial cyclic elements of $\partial(G,\bbp)$.
According to the following result, the notions of cut pair and exactness have the same meaning when considering pairs of points in $\relbndry$ and in a nontrivial cyclic element.

\begin{proposition}[reduction]
\label{prop: reduction}
Let $(G,\bbp)$ be relatively hyperbolic with connected boundary, and let $C$ be a nontrivial cyclic element of $\relbndry$. Then the following hold.
\begin{enumerate}
    \item 
    \label{prop: red1}
    The set $C$ is connected and locally connected.
    \item
    \label{prop: red2}
    The stabilizer $H$ of $C$ is hyperbolic relative to a family $\mathbb{O}$ of representatives of the conjugacy classes of infinite subgroups of the form $H\cap gPg^{-1}$ where $g\in G$ and $P\in\bbp$. Additionally, the boundary $\bndry(H,\mathbb{O})$ is $H$--equivariantly homeomorphic to $C$.
    \item 
    \label{prop: red3}
    If $\xi$ is a non-parabolic local cut point of $C$, then the valence of $\xi$ in $C$ is the same as the valence of $\xi$ in $\relbndry$.
    \item
    \label{prop: red4}
    Assume that $\xi,\eta\in C$ are not parabolic points. The pair $\{\xi,\eta\}$ is a cut pair of $C$ if and only if $\{\xi,\eta\}$ is a cut pair of $\relbndry$. 
    Furthermore, if a pair $\{\xi,\eta\}$ is exact in $C$ it is exact in $\relbndry$.
    \item 
    \label{prop: red5}
    A subset $\nu\subset C$ is a necklace in $C$ if and only if $\nu$ is a necklace of $\relbndry$.
\end{enumerate}
\end{proposition}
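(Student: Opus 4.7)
The plan is to work through the five parts in sequence, exploiting at each step the tree-like decomposition of a Peano continuum into cyclic elements and the peripheral splitting furnished by Theorem~\ref{thm: peripheral splittings}.

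For part~(\ref{prop: red1}), I would simply cite the classical theory of cyclic elements in Peano continua (as developed in Kuratowski, \S 52): every nontrivial cyclic element of a Peano continuum is itself a Peano continuum, hence connected and locally connected. For part~(\ref{prop: red2}), consider the peripheral splitting tree $T$ of Theorem~\ref{thm: peripheral splittings}. The cyclic element $C$ corresponds to a vertex $w\in\mathcal{W}$, with stabilizer $H=\Stab(w)$. Since every edge stabilizer of $T$ is finitely generated parabolic and hence relatively quasiconvex, Proposition~\ref{prop:BigdelyWise} applies to give that $H$ is finitely generated and relatively quasiconvex. By the work of Dahmani and the second author cited just after Proposition~\ref{prop:IntersectionQC}, the pair $(H,\mathbb{O})$ is relatively hyperbolic with Bowditch boundary equivariantly homeomorphic to $\Lambda H$. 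Finally, Proposition~\ref{prop:TreeOfSpaces} applied to the peripheral splitting identifies $\Lambda H$ with $C$.

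The core observation driving parts~(\ref{prop: red3})--(\ref{prop: red5}) is a basic topological fact about cyclic elements: since non-parabolic points of $M$ are never cut points of $M$ (Theorem~\ref{thm:BoundaryConnectedness}(\ref{item:LocallyConnected})), and distinct cyclic elements meet only at cut points of $M$, one obtains a natural bijection between the connected components of $C\setminus F$ and the connected components of $M\setminus F$ for any finite set $F\subseteq C$ consisting of non-cut-points of $M$. Explicitly, each component $U$ of $C\setminus F$ extends to a component of $M\setminus F$ by adjoining every ``branch'' of $M\setminus C$ that attaches to $C$ at a cut point of $M$ lying in $U$. Given this bijection, part~(\ref{prop: red3}) follows by taking $F=\{\xi\}$: the ends of $M\setminus\{\xi\}$ biject with the ends of $C\setminus\{\xi\}$, so the two valences agree. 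Part~(\ref{prop: red4}) follows by taking $F=\{\xi,\eta\}$: the number of components of $C\setminus\{\xi,\eta\}$ matches that of $M\setminus\{\xi,\eta\}$, neither $\xi$ nor $\eta$ is a cut point of $C$ (otherwise the bijection would force it to be a cut point of $M$), so $\{\xi,\eta\}$ is a cut pair of $C$ exactly when it is a cut pair of $M$; exactness transfers because it equates the common number of components with the common valence.

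For part~(\ref{prop: red5}), I would show that the $\sim$-relation defining necklaces coincides in $C$ and $M$. For pairs of non-parabolic points this is immediate from~(\ref{prop: red3}) and~(\ref{prop: red4}). For a parabolic point $\eta\in C$ that is not a cut point of $M$, the same component-counting bijection computes the valence in $C$ and applies to pairs $\{\eta,\xi\}$. Cut points of $M$ lying in $C$ cannot participate in a bivalent exact cut pair intrinsic to $C$ (they would fail to be bivalent in $C$ because infinitely many branches meet $C$ only there, or because $C$ accumulates several components at the attaching point, to be handled carefully). Since necklaces are defined as closures of bivalent $\sim$-classes with at least three members, equality of the $\sim$-relations forces equality of the necklaces. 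The main obstacle will be formalizing the branch-attachment bijection when infinitely many cut points of $M$ accumulate at $\xi\in C$: one must verify, using the compactness of $M$ together with local connectedness, that the adjoined branches do not alter the component count or the ends count of the relevant punctured neighborhoods, and that no spurious cut points of $C$ appear that fail to be cut points of $M$.
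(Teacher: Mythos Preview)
The paper's proof is entirely by citation: parts~(\ref{prop: red1}) and~(\ref{prop: red2}) are attributed to Bowditch \cite{Bow01_Peripheral}, and parts~(\ref{prop: red3})--(\ref{prop: red5}) to \cite[\S 3]{Haulmark_LocalCutPt}. Your outline is therefore not competing with a detailed argument in the paper itself but rather unpacking those references, and the route you describe is essentially the one taken there. Your treatment of parts~(\ref{prop: red1}) and~(\ref{prop: red2}) is fine; using Proposition~\ref{prop:BigdelyWise} and Proposition~\ref{prop:TreeOfSpaces} in place of a direct citation to Bowditch is a legitimate internal rederivation.

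There is, however, a real gap in your argument for part~(\ref{prop: red3}). A bijection of \emph{components} of $C\setminus\{\xi\}$ with components of $M\setminus\{\xi\}$ does not by itself yield a bijection of \emph{ends}. What is needed is that the closures of the components of $M\setminus C$ form a null family (only finitely many have diameter exceeding any given $\epsilon>0$); this is a standard consequence of local connectedness of $M$. With that in hand, every sufficiently small connected punctured neighborhood of $\xi$ in $M$ meets $C$ in a connected punctured neighborhood of $\xi$ in $C$ with the same number of components, and the inverse systems defining ends are cofinal. You flag this as ``the main obstacle'' at the end, but it is the heart of~(\ref{prop: red3}) and should not be deferred.

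For part~(\ref{prop: red5}), your hedging about cut points of $M$ that lie in $C$ can be resolved cleanly using tools already in the paper. Such a point $p$ is parabolic, and by Proposition~\ref{prop:Bivalent} applied to $C=\boundary(H,\mathbb{O})$, if $p$ lies in a $\sim_C$--class of size at least two then that class has at least three members and Lemma~\ref{lem:parabolics are not isolated} forces $p$ to be an accumulation point of its class. Hence discarding all such $p$ from a $\sim_C$--class does not change the closure, and the remaining points---being non-cut-points of $M$---are handled by your component-bijection argument (suitably extended, as you note, to parabolic non-cut-points). This replaces your tentative ``infinitely many branches'' reasoning, which as stated is not correct: there is no general reason a cut point of $M$ must fail to be bivalent in $C$.
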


\begin{proof}
Since $\relbndry$ is connected and locally connected, each of its nontrivial cyclic elements is as well by \cite[\S 7]{Bow01_Peripheral}. Conclusion~(\ref{prop: red2}) is due to Bowditch \cite[Thm.~1.3]{Bow01_Peripheral}.  For Conclusions (\ref{prop: red3}), (\ref{prop: red4}), and~(\ref{prop: red5}), see \cite[\S 3]{Haulmark_LocalCutPt}.
\end{proof}

Corollary~\ref{cor:ConnectedHalfspaces} does not depend on the local connectedness of the boundary.
This result is used by Dasgupta in his proof that a connected Bowditch boundary is always locally connected (see Theorem~\ref{thm:BoundaryConnectedness}).
Applying Theorem~\ref{thm:BoundaryConnectedness} gives the following stronger conclusion.

\begin{corollary}
\label{cor:SplittingExact}
Let $(G,\mathbb{P})$ be relatively hyperbolic with connected boundary $M$.
Let $T$ be a minimal $(\mathbb{E},\mathbb{P})$--tree.
If an edge $e$ has nonparabolic $2$--ended stabilizer $G_e$, then its limit set $\Lambda G_e$ is an exact cut pair of $M$.
\end{corollary}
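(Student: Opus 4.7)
The plan is to first show that $\Lambda G_e$ is a cut pair of $M$, then pass to the unique cyclic element $C$ of $M$ containing that cut pair, and finally apply Lemma~\ref{lem: 2ended are exact} to a relatively hyperbolic pair with boundary $C$ in which $C$ has no cut points.

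First I would verify that $\Lambda G_e = \{\xi,\eta\}$ is a cut pair. Since $G_e$ is $2$--ended the limit set consists of two distinct points. By Proposition~\ref{Prop: separation}, the open halfspaces $U(\vec{e}\,)$ and $U(-\vec{e}\,)$ form a separation of $M \setminus \Lambda G_e$ into two disjoint, nonempty open sets, so $\{\xi,\eta\}$ separates $M$. Since $G_e$ is nonparabolic, neither $\xi$ nor $\eta$ is a parabolic point; Theorem~\ref{thm:BoundaryConnectedness}(\ref{item:LocallyConnected}) then ensures that neither is a global cut point of $M$, and hence $\{\xi,\eta\}$ is a cut pair.

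Next, by Lemma~\ref{lem:CutPair} there is a unique cyclic element $C$ of $M$ containing $\{\xi,\eta\}$, and uniqueness forces $G_e$ to stabilize $C$. By Proposition~\ref{prop: reduction}(\ref{prop: red2}), the stabilizer $H$ of $C$ is relatively hyperbolic with respect to a collection $\mathbb{O}$, and $\boundary(H,\mathbb{O})$ is $H$--equivariantly homeomorphic to $C$. The subgroup $G_e$ remains nonparabolic $2$--ended in $(H,\mathbb{O})$: being $2$--ended is intrinsic, while a containment $G_e \le h(H\cap gPg^{-1})h^{-1}$ would force $G_e \le hgPg^{-1}h^{-1}$, contradicting nonparabolicity in $(G,\mathbb{P})$.

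Finally, the cyclic element $C$ has no cut points of its own (a standard property of nontrivial cyclic elements of a Peano continuum). Proposition~\ref{prop: reduction}(\ref{prop: red4}) shows that $\{\xi,\eta\}$ is also a cut pair of $C$, and via the equivariant homeomorphism it is the limit set of the nonparabolic $2$--ended subgroup $G_e$ of $H$. Lemma~\ref{lem: 2ended are exact} applied to the pair $(H,\mathbb{O})$ then gives that $\{\xi,\eta\}$ is an exact cut pair of $C$, and one further invocation of Proposition~\ref{prop: reduction}(\ref{prop: red4}) transfers exactness back to $M$. The only delicate point is noting that $C$ has no cut points of its own; once this is in hand, the argument is a clean assembly of the separation result, the cyclic-element reduction, and the earlier loxodromic-implies-exact lemma.
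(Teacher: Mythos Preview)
Your proof is correct and follows essentially the same approach as the paper's: establish that $\Lambda G_e$ separates $M$ via the halfspace decomposition, rule out global cut points using Theorem~\ref{thm:BoundaryConnectedness} since the points are nonparabolic, and then invoke Lemma~\ref{lem: 2ended are exact} after reducing to a cyclic element via Proposition~\ref{prop: reduction}. The paper compresses the reduction step into a single sentence, whereas you have spelled out explicitly how the cyclic element, its relatively hyperbolic stabilizer, and the transfer of exactness fit together; this expansion is accurate and matches what the paper intends.
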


\begin{proof}
The conclusion of Corollary~\ref{cor:ConnectedHalfspaces} would imply that $\Lambda G_e$ is a cut pair if we knew that neither point of $\Lambda G_e$ is a cut point.
But Theorem~\ref{thm:BoundaryConnectedness} gives that the points of $\Lambda G_e$ cannot be cut points, since they are not parabolic.
If $M$ has no cut point, then the cut pair $\Lambda G_e$ is exact by Lemma~\ref{lem: 2ended are exact}.
The general case, in which $M$ has cut points follows from Proposition~\ref{prop: reduction}.
\end{proof}

\begin{defn}[Pieces]
Let $Z$ be the union of all global cut points and inseparable exact cut pairs of $\bndry(G,\bbp)$.  Consider the equivalence relation on $\bndry(G,\bbp)\setminus Z$ where two points are related if they cannot be separated by a cut point or an inseparable exact cut pair. The closure of an equivalence class containing at least two points is a {\it piece}. There are two types of pieces: those that can be separated by an exact cut pair and those that cannot. A piece which cannot be separated by an exact cut pair is {\it rigid}. The stabilizer in $G$ of a rigid piece is a {\it rigid subgroup}.
\end{defn}

\begin{theorem}
\label{Thm: construction of T}
Let $(G,\bbp)$ be relatively hyperbolic with connected boundary $M=\bndry(G,\bbp)$. 
Consider the bipartite graph $T$ with vertex set $Z\sqcup \Pi$ where $Z$ is the set of all global cut points and inseparable exact cut pairs and $\Pi$ is the collection of pieces of $M$. Two vertices $z\in Z$ and $\pi\in\Pi$ are connected by an edge in $T$ if $z$ is contained in $\pi$.

Then $T$ is a simplicial $(\mathbb{E},\bbp)$--tree on which $G$ acts, where $\mathbb{E}$ is the collection of elementary subgroups of $(G,\bbp)$. 
\end{theorem}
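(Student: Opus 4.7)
The plan is to realize $T$ as a refinement of the Bowditch--Dasgupta cut-point tree $T_0$ from Theorem~\ref{thm: peripheral splittings}. Two preliminary observations drive the argument. First, by Theorem~\ref{thm:BoundaryConnectedness}(\ref{item:LocallyConnected}), every cut point of $M$ is parabolic, whereas by Proposition~\ref{prop: only finitely many}(2), every point of an inseparable exact cut pair is fixed by a loxodromic (nonparabolic $2$--ended) element; since in a convergence action the parabolic and loxodromic fixed-point sets are disjoint, no cut point of $M$ can lie in any inseparable exact cut pair. Second, by Proposition~\ref{prop: reduction}, for each non-trivial cyclic element $C$ of $M$ the stabilizer $H=\Stab(C)$ forms a relatively hyperbolic pair $(H,\mathbb{O})$ whose boundary is $H$--equivariantly identified with $C$, the continuum $C$ has no cut points, and the inseparable exact cut pairs of $C$ coincide with those of $M$ contained in $C$. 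Thus the results of Section~\ref{sec:NoCutPoint} apply inside each cyclic element.

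For each non-trivial cyclic element $C$, let $Z_C$ be its set of inseparable exact cut pairs and $\Pi_C$ its set of pieces, and form the bipartite graph $T_C$ with vertex set $Z_C\sqcup\Pi_C$ whose edges record containment $z\subseteq \pi$. I claim $T_C$ is a simplicial tree. By the very definition of inseparability, distinct elements of $Z_C$ are nested: given $\{\zeta,\xi\},\{\sigma,\tau\}\in Z_C$, the points $\zeta$ and $\xi$ lie in a single component of $C\setminus\{\sigma,\tau\}$ and vice versa. Connectedness of $T_C$ is proved by joining two given pieces with an arc in the Peano continuum $C$ (Proposition~\ref{prop: reduction}(\ref{prop: red1})), noting that such an arc can meet only finitely many elements of $Z_C$ of uniformly bounded diameter (using the cocompactness input behind Proposition~\ref{prop: only finitely many}), and encoding these crossings as a walk in $T_C$. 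Acyclicity follows from nestedness: in any alleged cycle, two consecutive cut pairs would have to separate each other, a direct contradiction.

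I then construct $T$ by refining $T_0$: each cyclic-element vertex $C\in\mathcal{W}(T_0)$ is replaced by $T_C$, and each adjacent cut-point vertex $c\in\mathcal{V}(T_0)$ is attached to the unique piece of $T_C$ containing $c$. Existence of this piece uses that the countable set $Z$ is meager in $M$, so $c$ lies in the closure of some equivalence class. Uniqueness uses the first preliminary observation (so $c\notin Z_C$) together with a purely tree-theoretic argument: if $c$ were in the closures of two distinct pieces $\pi_1,\pi_2\in\Pi_C$, the unique path from $\pi_1$ to $\pi_2$ in $T_C$ would pass through some $z\in Z_C$ whose complementary components in $C$ have closures meeting precisely in $z$, forcing $c\in z\subseteq Z_C$, a contradiction. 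Grafting trees along single identified vertices preserves the tree property, and the resulting graph coincides with the bipartite graph $T$ of the statement.

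Finally, the action of $G$ on $M$ by homeomorphisms preserves $Z$, $\Pi$, and the containment relation, so it induces a simplicial action on $T$. Any edge of $T$ joins some $z\in Z$ to some $\pi\in\Pi$ with $z\subseteq\pi$, and its stabilizer is contained in $\Stab(z)$; the latter is parabolic when $z$ is a cut point and nonparabolic $2$--ended when $z$ is an inseparable exact cut pair (by Proposition~\ref{prop: only finitely many}), so edge stabilizers are elementary and $T$ is an $\mathbb{E}$--tree. For the $\mathbb{P}$--tree condition, each $P\in\mathbb{P}$ fixes its parabolic point $p$: if $p$ is a cut point then $\{p\}\in Z$ is a vertex fixed by $P$, and otherwise $p\in M\setminus Z$ and the same tree-theoretic uniqueness argument given above shows $p$ lies in exactly one piece, which must be setwise fixed by $P$. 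I expect the main obstacle to be the construction and tree property of $T_C$: extracting enough nestedness from the topological inseparability condition to prove both connectedness and acyclicity, while correctly handling the interaction with parabolic points lying in necklaces within $C$.
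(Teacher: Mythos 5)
Your outline takes a genuinely different route from the paper: the paper never builds the cut-pair dual tree by hand. Instead it uses Proposition~\ref{prop: only finitely many} to get finitely many $G$--orbits of inseparable exact cut pairs with nonparabolic $2$--ended stabilizers, adds those stabilizers to $\mathbb{P}$ to form $\widehat{\mathbb{P}}$, invokes Dahmani's combination theorem so that $\boundary(G,\widehat{\mathbb{P}})$ is obtained from $M$ by pinching each such pair to a point (inseparability being exactly what guarantees the pinched points are global cut points), and then applies Theorem~\ref{thm: peripheral splittings} to $(G,\widehat{\mathbb{P}})$; the quotient map induces adjacency-preserving bijections between $Z\sqcup\Pi$ and the cut points and nontrivial cyclic elements of the pinched boundary, so the tree structure is imported wholesale from Bowditch's peripheral-splitting theorem. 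The point of this detour is that all the hard ``discreteness'' content -- that the incidence graph is a simplicial tree, connected, with finitely many edge orbits, and with every $P\in\mathbb{P}$ elliptic -- is already proved by Bowditch at the level of cut points.

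That hard content is precisely where your sketch has gaps, and they are concentrated in the construction of $T_C$ and the attaching step. (a) For connectedness you assert that an arc joining two pieces ``can meet only finitely many elements of $Z_C$''; this is not justified and is false as stated: the cocompactness argument behind Proposition~\ref{prop: only finitely many} only bounds the number of cut pairs of diameter at least $\epsilon$, and an arc can meet infinitely many small ones. What you actually need is that only finitely many inseparable exact cut pairs separate two given pieces (e.g.\ via an argument that a sequence of separating pairs with diameters tending to zero would accumulate on a global cut point of $C$, contradicting Proposition~\ref{prop: reduction}), together with a proof that the crossings are linearly ordered and that the region between two consecutive separating pairs is contained in the closure of a single piece -- none of which is supplied. (b) Acyclicity does not follow from nestedness as you claim: a cycle in the bipartite incidence graph yields two distinct pieces each containing two distinct cut pairs, not ``two consecutive cut pairs separating each other''; ruling it out requires showing that distinct pieces containing a given $z\in Z_C$ lie in closures of distinct components of $C\setminus z$, and that a chain of pieces avoiding $z$ cannot join two such sides. (c) The attachment of a cut-point vertex $c$ (and likewise the ellipticity of $P\in\mathbb{P}$ when its fixed point is not a cut point) needs $c$ to lie in the closure of a \emph{single} equivalence class; countability/meagerness of $Z$ only makes $c$ a limit of points drawn from possibly infinitely many different classes, so existence of the attaching piece is unproved. (Your uniqueness argument, by contrast, can be made to work directly: two distinct pieces of $C$ intersect only inside an inseparable exact cut pair, and $c$ is parabolic while such pairs are loxodromic fixed-point sets.) In short, the steps you flag as ``the main obstacle'' are the theorem's actual content; as written they would have to be resolved either by reproving Bowditch-style discreteness for cut pairs or, as the paper does, by pinching and quoting Theorem~\ref{thm: peripheral splittings}.
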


The tree $T$ constructed in the theorem is the \emph{exact cut pair/cut point tree} for $M$.  

\begin{proof}
%Let $(G,\bbp)$ be relatively hyperbolic with connected boundary $M=\bndry (G,\bbp)$.
By Theorem~\ref{thm:BoundaryConnectedness}, all cut points of $\relbndry$ are parabolic and $M$ is locally connected. 
For each nontrivial cyclic element $C$ of $M$, let $H$ be the stabilizer of $C$, and let $\mathbb{O}$ be a set of representatives of the conjugacy classes of infinite subgroups of the form $H\cap gPg^{-1}$ such that $g\in G$ and $P\in\bbp$.
By Theorem~\ref{thm: peripheral splittings} and Proposition~\ref{prop: reduction}, the cyclic element $C$ is connected, locally connected, without cut points, and homeomorphic to $\bndry(H,\mathbb{O})$.

Since there are only finitely many conjugacy classes of peripheral subgroups, Theorem~\ref{thm: peripheral splittings} implies there are only finitely many orbits of nontrivial cyclic elements in $\bndry(G,\bbp)$.
Each cut pair of $M$ is contained in a nontrivial cyclic element $\boundary(H,\mathbb{O})$ (see Lemma~\ref{lem:CutPair}). By Proposition~\ref{prop: only finitely many} a nontrivial cyclic element $C=\bndry(H,\mathbb{O})$ contains only finitely many $H$--orbits of inseparable exact cut pairs, and the stabilizer of each is a maximal non-parabolic $2$--ended subgroup of $H$.
In particular, an inseparable exact cut pair consists of non-parabolic points.
Thus $\boundary(G,\mathbb{P})$ contains only finitely many $G$--orbits of exact inseparable cut pairs, and the $G$--stabilizer of each is a maximal non-parabolic $2$--ended subgroup of $G$.

As in the proof of Proposition~\ref{Prop: separation}, we form $\widehat{\mathbb{P}}$ by choosing representatives of the finitely many $G$--orbits of exact inseparable cut pairs, and adding their $G$--stabilizers to $\mathbb{P}$.
Then $(G,\widehat{\mathbb{P}})$ is relatively hyperbolic with connected boundary $\boundary(G,\widehat{\mathbb{P}})$ formed from $M$ by collapsing to a point each exact inseparable cut pair \cite{Dahm03,Osin06Elementary,Yang14_Peripheral}.
As observed in \cite[Lem.~5.2]{Haulmark_LocalCutPt}, inseparability implies that the resulting pinched points are cut points.

Observe that the tree $T$ given by applying Theorem~\ref{thm: peripheral splittings} to $(G,\widehat{\bbp})$ may be considered as an $(\mathbb{E},\bbp)$--tree, with respect to the original peripheral structure $\mathbb{P}$.
Indeed, $q$ induces a bijection between the set $Z \subset \relbndry$ and the set of cut points of $\boundary(G,\widehat{\mathbb{P}})$ and also a bijection between the set of pieces of $\relbndry$ and the set of nontrivial cyclic elements of $\bndry(G,\widehat{\bbp})$. 
Since these bijections respect adjacency, we see that $T$ is the desired $(\mathbb{E},\mathbb{P})$--tree.
\end{proof}

A consequence of this construction is a structure result that classifies the types of vertex stabilizers of the exact cut tree. 
First we will need a lemma concerning pieces.

\begin{lemma}
\label{lemma: piece necklace}
Every nonrigid piece of $\bndry(G,\bbp)$ is a necklace.
\end{lemma}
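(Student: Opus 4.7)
I would begin by letting $\pi$ be a nonrigid piece, so there is an exact cut pair $\{a,b\}$ of $M=\bndry(G,\bbp)$ that separates $\pi$. Since any two piece-equivalent points cannot be separated by an inseparable exact cut pair, the pair $\{a,b\}$ must itself be separable. By Proposition~\ref{prop:M3Pairs} multivalent exact cut pairs are inseparable, so $\{a,b\}$ is bivalent, and by Proposition~\ref{prop:Bivalent} the only separable bivalent exact cut pairs are non-jump pairs in a necklace. Hence $a\sim b$, the $\sim$--class $N=[a]_\sim$ has cardinality at least three, and $\{a,b\}$ is a non-jump pair of the necklace $\overline{N}$.

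The main work is to show $\pi=\overline{N}$. For $\overline{N}\subseteq\pi$, I would show that any two points $x,y\in\overline{N}\setminus Z$ are piece-equivalent, which implies the containment since $Z\cap\overline{N}$ is countable while $\overline{N}$ is a Cantor set. A jump $\{c,d\}$ of $\overline{N}$ does not separate $\overline{N}\setminus\{c,d\}$ because the gap component of $M\setminus\{c,d\}$ contains no other points of $\overline{N}$. A multivalent inseparable cut pair disjoint from $\overline{N}$ cannot separate any pair in $N$ by Lemma~\ref{lemma: cut not separated by M(3+)}(2), and since every pair in $N$ is a cut pair of $M$, this forces all of $\overline{N}$ into a single complementary component. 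For parabolic cut points of $M$ and for the remaining inseparable cut pairs disjoint from $\overline{N}$, I would pass to the pinched boundary $M^*=\bndry(G,\widehat{\bbp})$ from the proof of Theorem~\ref{Thm: construction of T}, where $\overline{N}$ projects to a circle $\overline{N}^*$ obtained from the Cantor set by collapsing each jump pair to a point. Since $\overline{N}^*$ is connected, no cut point of $M^*$ can disconnect it, so in $M$ no such separator splits $\overline{N}\setminus Z$.

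For the reverse inclusion $\pi\subseteq\overline{N}$, suppose $z\in\pi\setminus\overline{N}$. Working again in $M^*$, I would establish that $\overline{N}^*$ is a cyclic element of $M^*$, and by the standard Peano continuum theory, each component of $M^*\setminus\overline{N}^*$ attaches to $\overline{N}^*$ at a single cut point, which must be a pinched jump. Pulling back, each component of $M\setminus\overline{N}$ has frontier in $\overline{N}$ equal to a single jump $\{c,d\}$, so the inseparable cut pair $\{c,d\}$ separates $z$ from $\overline{N}\setminus\{c,d\}$, contradicting the piece equivalence of $z$ with points of $\overline{N}\setminus Z$. The main obstacle is the structural claim that $\overline{N}^*$ is genuinely a cyclic element of $M^*$ with the described attaching structure; this requires carefully combining the cyclic separating property of necklaces (to produce the circle after pinching) with the local connectedness of $M^*$ (to ensure cyclic elements meet only at cut points) and verifying that no new cut points of $M^*$ appear inside $\overline{N}^*$ beyond the pinched jumps.
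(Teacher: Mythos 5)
Your overall outline matches the paper's strategy (identify the separating pair as a separable bivalent pair inside a necklace $\overline{N}$, then show the piece equals $\overline{N}$ using that jumps are inseparable exact cut pairs), but there are two genuine gaps. First, you invoke Proposition~\ref{prop:M3Pairs}, Proposition~\ref{prop:Bivalent}, and Lemma~\ref{lemma: cut not separated by M(3+)} directly in $M=\bndry(G,\bbp)$, but all of these are stated only for a boundary that is connected \emph{with no cut points} (and Proposition~\ref{prop:Bivalent} further excludes the circle), whereas in this lemma $M$ may well have global cut points --- that is the whole point of the tree being built. The paper's proof deals with this by passing to the cyclic element $C$ containing the piece, transferring the notions of cut pair, exactness, and necklace between $M$ and $C$ via the reduction Proposition~\ref{prop: reduction}, and handling separately the case where $C$ is a circle; your argument omits the reduction entirely and never addresses the circle case, so your opening step is not justified as written.

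Second, the decisive step of the proof --- that anything outside $\overline{N}$ (within the relevant cyclic element) is separated from $\overline{N}$ by a jump, so that a point of the piece outside $\overline{N}$ would be cut off by an inseparable exact cut pair --- is exactly the part you do not establish. Your plan is to pass to the pinched boundary $\bndry(G,\widehat{\bbp})$ and argue that $\overline{N}$ projects to a circle which is a cyclic element there, with complementary components attached at pinched jump points; but you concede yourself that this structural claim is ``the main obstacle'' and leave it unverified. Making it work would require, among other things, showing that the collapsing map identifies points of $\overline{N}$ only along jumps (e.g.\ ruling out inseparable exact cut pairs meeting $\overline{N}$ in a single point), that the quotient of the Cantor set really is a circle, and that this circle is a maximal cyclic element of the pinched boundary --- which amounts to reproving the necklace-structure input that the paper simply quotes: Bowditch's result that each component of $C\setminus\overline{N}$ is separated from $\overline{N}$ by a jump \cite[Lem.~2.3]{Bow98_JSJ}, applied in the unpinched cyclic element. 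Without that ingredient (or a complete substitute), the inclusion $\pi\subseteq\overline{N}$, and hence the lemma, is not proved. A related smaller point: your argument that the equivalence class of $\overline{N}\setminus Z$ is the \emph{same} class as the one defining $\pi$ is also left implicit and ultimately rests on the same jump-separation fact.
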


\begin{proof}
Let $Y$ be a piece of $\bndry (G,\bbp)$, which by definition is not separated by any inseparable exact cut pair of $M$.
Suppose $Y$ is separated by an exact cut pair $\{\zeta,\xi\}$ of $M$. Then such a cut pair cannot be inseparable in $M$.
Let $C$ be the cyclic element of $M$ containing the piece $Y$.
Then $\{\zeta,\xi\}$ is a cut pair of $C$ that is not inseparable by Proposition~\ref{prop: reduction}(\ref{prop: red4}).
Thus Lemma~\ref{lemma: cut not separated by M(3+)} implies that $\{\zeta,\xi\}$ cannot be an isolated exact cut pair of $C$.
It follows that $\{\zeta,\xi\}$ must be contained in a $\sim$--class $N$ of $C$ containing at least three elements. We will show that $\overline{N}$ is equal to $Y$.

First assume that $C$ is homeomorphic to the circle.  Then the continuum $C$ consists of a single $\sim$--class of $C$.
In this case, any necklace $N$ contained in $C$ must equal $C$ itself.  Therefore $\overline{N}=Y$ since $N\subseteq \overline{N} \subseteq Y \subseteq C$.
%In this case, we will show that $N=C$, which implies that. Observe that $N$ contains at least three points and $N \subseteq C$. We apply a result of Bowditch on Peano continua with no cut points, which states that if two $\sim$--classes have closures that share three points, then the $\sim$--classes must be equal \cite[Lem.~3.5]{Bow98_JSJ}. It follows immediately that $N=C$.

Now assume that $C$ is not homeomorphic to $S^1$ and let $y\in Y\setminus\overline{N}$.
Proposition~\ref{prop: reduction}(\ref{prop: red5}) together with Proposition~\ref{prop:Bivalent} give that $\overline{N}$ is homeomorphic to a Cantor set, and each jump is inseparable and is stabilized by a loxodromic element. Thus by Lemma~\ref{lem: 2ended are exact} each jump is exact.  
Since $C$ is a cyclic element and $\overline{N}\subset Y\subset C$, Lemma~2.3 of Bowditch implies that each component of $C\setminus\overline{N}$ is separated from $\overline{N}$ by a jump. 
Thus $y$ is separated from $N$ by an inseparable exact cut pair, a contradiction.
\end{proof}

Our ultimate goal is to show that the exact cut pair/cut point tree $T$ is equal to the canonical JSJ tree of cylinders constructed by Guirardel--Levitt \cite{GL_TreesCyl} for elementary splittings relative to peripheral subgroups.
In the following proposition we show that $T$ has certain key features in common with the JSJ tree of cylinders. In Proposition~\ref{prop: cmbd tree is JSJ} we exploit the relationship between vertex types and topological features of Bowditch boundary to show that $T$ is JSJ tree.

\begin{proposition}
\label{prop: vertex types}
Let $T$ be the exact cut pair/cut point tree constructed in Theorem~\ref{Thm: construction of T}.
Every vertex stabilizer of $T$ is one of the following types:
\begin{enumerate}
    \item peripheral subgroup
    \item non-parabolic $2$--ended
    \item quadratically hanging with finite fiber, or
    \item rigid.
\end{enumerate}
\end{proposition}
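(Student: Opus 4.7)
The plan is to treat each vertex of $T$ according to whether it lies in $Z$ or in $\Pi$, and then reduce to a single substantive case. For $z \in Z$ the conclusion is immediate from earlier results: by Theorem~\ref{thm:BoundaryConnectedness}(\ref{item:LocallyConnected}) every global cut point of $M$ is a parabolic fixed point, so its $G$--stabilizer is the peripheral subgroup fixing it; by Proposition~\ref{prop: only finitely many}(2) the stabilizer of each inseparable exact cut pair is a non-parabolic $2$--ended subgroup of $G$. For $\pi \in \Pi$, if $\pi$ is rigid then its stabilizer is a rigid subgroup by definition. All that remains is to show that the stabilizer of a non-rigid piece is quadratically hanging with finite fiber, and by Lemma~\ref{lemma: piece necklace} every non-rigid piece of $M$ is a necklace.

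So fix a non-rigid piece $\overline{N}$ and let $H = \Stab_G(\overline{N})$. By Proposition~\ref{prop:Bivalent}(\ref{item:Bivalent:Necklace}), $H$ is relatively quasiconvex in $G$ with limit set $\overline{N}$, and $\overline{N}$ contains only finitely many $H$--orbits of jumps, each of which is an inseparable exact cut pair stabilized by a non-parabolic $2$--ended subgroup. By construction, an edge of $T$ incident to the vertex $\overline{N}$ corresponds either to a jump of $\overline{N}$ (with $2$--ended stabilizer) or to a parabolic cut point of $M$ lying on $\overline{N}$ (with parabolic stabilizer); in either case the corresponding edge group is the sort of subgroup one expects to map to a boundary/peripheral subgroup of the eventual orbifold.

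To extract the QH structure, I would exploit the cyclic separation property of $\overline{N}$ in the spirit of Bowditch's hyperbolic case \cite{Bow98_JSJ}. Pinching each jump and each maximal parabolic fixing a point of $\overline{N}$ (using the Dahmani combination theorem, as in the proof of Theorem~\ref{Thm: construction of T}) converts $\overline{N}$ into a topological circle $S^1$ that inherits the cyclic order. The group $H$ acts on $S^1$ as a (geometrically finite) convergence group preserving the circular order; let $F$ be the kernel of $H \to \Homeo(S^1)$. Since $F$ fixes every jump pointwise, applying Proposition~\ref{prop:IntersectionQC} to two jumps with disjoint endpoint sets shows that $F$ is contained in the intersection of two distinct $2$--ended subgroups with disjoint limit sets, forcing $F$ to be finite. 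By the convergence-group characterization of (virtually) Fuchsian groups (Tukia, Gabai, Casson--Jungreis in the cocompact case, with its geometrically finite analogue covering parabolics on $S^1$), the quotient $H/F$ is the orbifold fundamental group of a compact hyperbolic $2$--orbifold $\Sigma$, giving the required extension
\[
   1 \longrightarrow F \longrightarrow H \longrightarrow \pi_1(\Sigma) \longrightarrow 1.
\]
The QH boundary condition on incident edge groups is then immediate from the paragraph above: each such edge stabilizer maps to either a finite or a boundary-cyclic subgroup of $\pi_1(\Sigma)$.

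The principal obstacle will be making the pinching/circularization step precise and checking that it is compatible with the peripheral structure inherited from $G$, so that every maximal parabolic of $H$ whose fixed point lies on $\overline{N}$ corresponds to a cusp of $\Sigma$ and every jump to a boundary component of $\Sigma$. Once this is established, invoking a convergence-group theorem on $S^1$ to identify $H/F$ with an orbifold fundamental group is standard.
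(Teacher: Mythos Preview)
Your approach is essentially the paper's: both dispatch the $Z$--vertices and rigid pieces immediately, reduce the non-rigid piece case to necklaces via Lemma~\ref{lemma: piece necklace}, pinch jumps and parabolics to obtain an $S^1$ boundary for $H$, and then invoke the Tukia--Gabai--Casson--Jungreis theorem to realise $H/F$ as $\pi_1(\Sigma)$. The paper phrases the pinching step as enlarging the peripheral structure of $H$ to $\mathbb{J}\cup\mathbb{O}$ (jump stabilizers together with maximal parabolics of $H$), which is exactly your Dahmani-style collapse.

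Two small points to tighten. First, Proposition~\ref{prop:Bivalent} assumes the boundary has no cut points, so before invoking it you must pass from $M$ to the cyclic element $C$ containing $\overline{N}$ via Proposition~\ref{prop: reduction}; the paper does this explicitly. Second, your argument that $F$ is finite (intersecting the stabilizers of two disjoint jumps) does not cover the case where the necklace $\overline{N}$ is already a circle and $\mathbb{J}=\emptyset$; here one should instead appeal directly to the convergence property---elements of $F$ fix every point of $S^1$, and a convergence group acts properly on ordered triples, so the kernel is finite. With these adjustments your argument is complete.
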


\begin{proof}
Let $(G,\bbp)$ be the given relatively hyperbolic structure, and let $(G,\widehat \bbp)$ be the augmented relatively hyperbolic structure constructed in the proof of Theorem~\ref{Thm: construction of T}.
%The boundary $\bndry(G,\widehat \bbp)$ consists of nontrivial cyclic elements that meet at cut points together with the set of ideal points.
%(We refer the reader to Sections 7 and~8 of \cite{Bow01_Peripheral} for details, or \cite[Thm.~3.1]{Haulmark_LocalCutPt} for a summary of Bowditch's construction of peripheral splittings.)
Under the quotient map  $\bndry(G,\bbp)\rightarrow\bndry(G,\widehat \bbp)$ each inseparable exact cut pair is collapsed to a cut point.
Each cut point of $\bndry(G,\bbp)$ maps to a cut point of $\bndry(G,\widehat \bbp)$, and each piece and each necklace maps to a nontrivial cyclic element.
Furthermore the quotient induces a bijection from the ideal points of $\bndry(G,\bbp)$ to the ideal points of $\bndry(G,\widehat \bbp)$.

Ideal points do not separate the Bowditch boundary (see the proof of \cite[Cor.~3.2]{Haulmark_LocalCutPt}), and by Lemma~\ref{lemma: piece necklace} necklaces and rigid components are the only two types of pieces. So $T$ only has vertices corresponding to cut points, inseparable exact cut pairs, necklaces, and rigid components.

We need only show that the stabilizer of a necklace $\nu$ corresponding to a vertex $v$ is quadratically hanging with finite fiber. A similar result is established by Groff \cite[Prop.~7.2]{Grof13} using slightly different definitions. Let $H=\Stab(\nu)$. 
By Proposition~\ref{prop:Bivalent}, the necklace $\nu$ contains only finitely many $H$--orbits of jumps.
Let $\mathbb{J}$ be a finite set of subgroups representing the conjugacy classes of stabilizers of jumps in the action of $H$ on $\nu$. Since $\nu$ is a piece of $\relbndry$, it is a nontrivial cyclic element of $\partial(G,\widehat \bbp)$. Thus by Theorem~\ref{thm: peripheral splittings} and Proposition~\ref{prop: reduction} we may apply Proposition~\ref{prop:Bivalent} to conclude that $\nu$ is a cyclically ordered Cantor set if $\mathbb{J}$ is nonempty or a circle if $\mathbb{J}$ is empty.  Furthermore, $(H,\mathbb{O})$ is relatively hyperbolic, where $\mathbb{O}$ is a set of representatives for the conjugacy classes of maximal parabolic subgroups in $H$.
As in Theorem~\ref{Thm: construction of T},
it follows that $(H,\mathbb{J} \cup \mathbb{O})$ is relatively hyperbolic, and its Bowditch boundary is homeomorphic to $S^1$.

In particular, as explained in \cite{BowditchRelHyp}, the action of $H$ on $S^1$ is a convergence group action; in other words, for each sequence $(h_i)$ of distinct elements of $H$ there exist $\zeta,\xi \in S^1$ such that, after passing to a subsequence, $h_i\big| S^1\setminus\{\zeta\} \to \xi$ uniformly on compact sets.
The kernel $F$ of this action is finite, and the quotient $H/F$ is a faithful convergence group acting on $S^1$.
It follows from \cite{T88,Gabai92,CJ94} that the action of $H/F$ on $S^1$ extends to a faithful, proper, isometric action on the hyperbolic plane.

Since $H/F$ has limit set equal to $S^1$, it is the fundamental group of a complete, finite area hyperbolic $2$--orbifold $\Sigma$ with finitely many cusps (see \cite[\S 4.5]{Katok_Fuchsian}).
Truncating the cusps, we may assume that $\Sigma$ is a compact hyperbolic $2$--orbifold with boundary. Each member of the family $\mathbb{J} \cup \mathbb{O}$ is the preimage in $H$ of a boundary subgroup of $\Sigma$.
Every parabolic point of $\boundary(G,\mathbb{P})$ contained in $\nu$ is stabilized by a conjugate of a member of $\mathbb{O}$.  Since every inseparable exact cut pair of $\boundary(G,\mathbb{P})$ contained in $\nu$ is a jump of $\nu$, it must be stabilized by a conjugate of a member of $\mathbb{J}$.
Therefore each edge adjacent to $v$ corresponds to a boundary component of $\Sigma$.
\end{proof}

%%%%%%%%%%%%%%%%%%%%%%%%%%%%%%%%%%%%%%%%%%%%%
\section{Proof of the main theorem}
\label{sec: last section}
%%%%%%%%%%%%%%%%%%%%%%%%%%%%%%%%%%%%%%%%%%%%%

In this section we complete the proof of Theorem~\ref{thm: main thm} by showing that the exact cut pair/cut point tree $T$ is a JSJ tree with quadratically hanging flexible vertex stabilizers. More precisely, we prove:

\begin{proposition}
\label{prop: cmbd tree is JSJ}
Let $(G,\bbp)$ be a relatively hyperbolic group with connected boundary $M=\bndry(G, \bbp)$.  The exact cut pair/cut point tree $T$ for $\relbndry$ is a JSJ tree for splittings over elementary subgroups relative to peripheral subgroups.
Moreover, the flexible vertex stabilizers of $T$ are quadratically hanging with finite fiber.
\end{proposition}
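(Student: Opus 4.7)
The plan is to verify the two defining JSJ properties of $T$---universal ellipticity and domination over universally elliptic $(\mathbb{E},\bbp)$-trees---and then classify the flexible vertex stabilizers via Proposition~\ref{prop: vertex types}. A useful perspective is that $T$, viewed as the peripheral splitting tree for $(G,\widehat{\bbp})$ in the augmented structure $\widehat{\bbp} = \bbp \cup \mathbb{C}$ (where $\mathbb{C}$ consists of stabilizers of inseparable exact cut pairs), is already a JSJ for $(G,\widehat{\bbp})$ over $\widehat{\bbp}$-parabolic subgroups by Theorem~\ref{thm: peripheral splittings}. The task is to promote this to the $(\mathbb{E},\bbp)$-setting by careful use of Proposition~\ref{Prop: separation}.

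For universal ellipticity, the edge stabilizers of $T$ are either parabolic (at cut-point edges) or 2-ended stabilizers of inseparable exact cut pairs (at cut-pair edges). The parabolic case is immediate since such stabilizers sit in peripheral subgroups. For a 2-ended edge stabilizer $H = \Stab\{\zeta,\xi\}$ acting with translation axis $\ell$ in some $(\mathbb{E},\bbp)$-tree $T'$ and translation element $t$, I apply Proposition~\ref{Prop: separation} to edges on $\ell$ and track the locations of $\zeta$ and $\xi$ under the action of $\langle t\rangle$ on the halfspaces. They cannot lie in a common open halfspace, for iterating $t$ forces both into a single forward ideal point and hence $\zeta=\xi$. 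Nor can they lie in opposite halfspaces: a 2-ended $\Lambda G_{e'}$ would be an exact cut pair separating $\{\zeta,\xi\}$, contradicting inseparability, while a parabolic $\Lambda G_{e'} = \{p\}$ would require the cut point $p$ to separate $\zeta$ from $\xi$ in $M$; but by Lemma~\ref{lem:CutPair} the pair $\{\zeta,\xi\}$ lies in a common cyclic element $C$, which $p$ cannot disconnect. The remaining case $\zeta \in \Lambda G_{e'}$ forces $G_{e'}$ to be 2-ended with a fixed point shared with $H$; Proposition~\ref{prop: only finitely many}(3) together with commensurability of 2-ended groups of equal limit sets yields a finite $H$-orbit of edges on $\ell$, again contradicting translation.

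Next I address domination. Let $T'$ be universally elliptic. The parabolic and 2-ended vertex stabilizers of $T$ are elliptic by the argument above. For a rigid vertex $v$ with piece $\pi$ and stabilizer $G_v$, suppose $G_v$ acts non-elliptically on $T'$ with nontrivial minimal subtree $T'_v$. Passing to $\widehat M = \bndry(G,\widehat{\bbp})$, Proposition~\ref{prop: reduction} identifies $\bndry(G_v,\widehat{\bbp}_{G_v})$ with the nontrivial cyclic element $\widehat{\pi} = q(\pi)$. Applying Proposition~\ref{Prop: separation} to $G_v$ acting on $T'_v$, each edge yields a separation of $\widehat{\pi}$ by the limit set of its $G_v$-stabilizer. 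Since $\widehat{\pi}$ has no cut points, this limit set cannot be a single parabolic, so by Corollary~\ref{cor:SplittingExact} it is an exact cut pair of non-$\widehat{\bbp}$-parabolic points in $\widehat{\pi}$. Pulling back via $q$---using that non-$\widehat{\bbp}$-parabolic points correspond to single non-pinched points of $M$ and that inseparability of pinched pairs confines them to a single side of any other exact cut pair---produces an exact cut pair of $\pi$ in $M$, contradicting rigidity.

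For a QH vertex $v$, Proposition~\ref{prop: vertex types} presents $G_v$ as an extension of $\pi_1(\Sigma)$ by a finite fiber for a compact hyperbolic $2$-orbifold $\Sigma$ with boundary. A non-elliptic action of $G_v$ on $T'$ induces a splitting of $\pi_1(\Sigma)$ relative to boundary subgroups, corresponding to an essential non-boundary-parallel simple closed curve $c\subset\Sigma$. I construct an alternative $(\mathbb{E},\bbp)$-tree $T''$ by refining $T$ at the vertex $v$ with the Bass-Serre tree of the splitting of $G_v$ along a curve $c'\subset\Sigma$ transverse to $c$; this remains an $(\mathbb{E},\bbp)$-tree because each peripheral subgroup of $G$ meets $G_v$ in a subgroup contained in a boundary subgroup of $\Sigma$, which is elliptic in the $c'$-splitting. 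In $T''$ the cyclic subgroup $\langle c\rangle$, commensurable with the $G_v$-edge-stabilizer $G_v\cap G_{e'}$ of $T'$, has positive translation length, so $G_{e'}$ is not elliptic in $T''$, contradicting universal ellipticity of $T'$. The same transverse-curve construction exhibits QH vertex stabilizers as flexible, while parabolic, 2-ended, and rigid vertex stabilizers are elliptic in every $(\mathbb{E},\bbp)$-tree (hence non-flexible); so the flexible vertex stabilizers of $T$ are precisely the QH ones with finite fiber. The main obstacle I anticipate is the pull-back step in the rigid case, where one must verify that connected components of $\widehat M\setminus\{a,b\}$ lift to connected components of $M\setminus\{a,b\}$, which relies on the combinatorial distribution of pinched inseparable pairs relative to exact cut pairs.
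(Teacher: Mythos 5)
Your universal-ellipticity half is essentially the paper's argument in different clothing (your axis-by-axis case analysis versus an appeal to Proposition~\ref{prop:PieceInVertexSpace} plus the co-elementary coincidence case), and it is fine. The genuine gap is in your rigid-vertex step of the domination half. You pass to $\boundary(G,\widehat{\mathbb{P}})$, restrict to the minimal subtree $T'_v$, produce an exact cut pair of the cyclic element $\widehat{\pi}$, and then must pull it back to an exact cut pair of $M$ that separates the piece $\pi$ --- the step you flag and do not supply. This is not routine: separation of $M$ by the preimage pair is easy (both points are non-pinched, so the preimages of the two open sets separate), but \emph{exactness} in $M$ requires controlling valences and complementary components under the collapse, and inseparability of the pinched pairs only forbids separation by \emph{exact} cut pairs, so you cannot a priori exclude a pinched pair straddling your not-yet-exact pair and merging components in the quotient. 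You would also need to justify that $T'_v$ is an $(\mathbb{E}_1,\widehat{\mathbb{P}}_{G_v})$--tree, i.e.\ that the induced $2$--ended peripherals coming from $\widehat{\mathbb{P}}$ act elliptically on $T'_v$ (this follows from your universal-ellipticity step, but you never invoke it). The detour is unnecessary: apply Proposition~\ref{Prop: separation} and Corollary~\ref{cor:SplittingExact} directly to the $G$--action on $T'$, so that every edge limit set is a cut point or an exact cut pair of $M$ itself; by definition a rigid piece is separated by none of these, hence lies in one open halfspace for each edge, and Proposition~\ref{prop:PieceInVertexSpace} places it inside $\Lambda G_w$ for some vertex $w$ of $T'$, giving ellipticity of $G_v$ with no collapsing and no pull-back. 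This is the paper's route.

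A second, smaller issue is the quadratically hanging case, which you re-derive rather than quote. The assertion that a non-elliptic action of $G_v$ on $T'$ ``corresponds to an essential non-boundary-parallel simple closed curve'' is precisely the nontrivial input: one needs that an elementary splitting of the finite-by-orbifold group $G_v$, relative to its boundary and peripheral subgroups, is dual to an essential curve, and that the relevant edge group is neither finite nor boundary-parallel. The paper sidesteps this by citing the Guirardel--Levitt result on incompatible splittings of quadratically hanging groups \cite[Thm.~5.27]{GL_JSJ}; if you insist on the transverse-curve argument you must either prove or cite that duality statement and also verify that your refinement $T''$ is an $(\mathbb{E},\mathbb{P})$--tree (peripheral intersections and the edge groups of $T$ incident to $v$ are boundary-parallel by Proposition~\ref{prop: vertex types}, hence elliptic in the $c'$--splitting). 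Finally, note that the ``moreover'' clause only requires that parabolic, $2$--ended and rigid stabilizers be elliptic in \emph{every} $(\mathbb{E},\mathbb{P})$--tree, which your (corrected) arguments give; exhibiting the quadratically hanging stabilizers as genuinely flexible is not needed for the statement.
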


\begin{proof}
By Theorem~\ref{Thm: construction of T}, the tree $T$ is an  $(\mathbb{E},\bbp)$--tree. We first show that $T$ is universally elliptic; \emph{i.e.}, each edge stabilizer $H$ of $T$ acts elliptically on every other $(\mathbb{E},\bbp)$--tree.
Recall that if $T_1,T_2$ are any $(\mathbb{E},\bbp)$--trees such that $T_1$ dominates $T_2$, then by definition if $H$ acts elliptically on $T_1$ it also acts elliptically on $T_2$.
Since every $(\mathbb{E},\bbp)$--tree is dominated by one with finitely generated edge stabilizers \cite[Cor.~2.25]{GL_JSJ}, it suffices to show that $H$ acts elliptically on each $(\mathbb{E},\bbp)$--tree $S$ that has finitely generated edge stabilizers.
Furthermore parabolic subgroups of $G$ are universally elliptic by definition.  
Thus we may also assume that $H$ is a nonparabolic $2$--ended edge group.

Suppose $H$ stabilizes an inseparable exact cut pair $\{\zeta,\xi\}\subset\relbndry$. Let $S$ be any $(\mathbb{E},\bbp)$--tree with finitely generated edge groups.  
Recall that the limit sets of any loxodromic or parabolic elements $g$ and $h$ either coincide or are disjoint by \cite[Thm.~2G]{Tuk94}.
If $\{\zeta,\xi\}$ coincides with the limit set of an edge group $G_e$ of $S$, then $H$ and $G_e$ are co-elementary, since $H$ and $G_e$ each have finite index in the stabilizer of $\{\zeta,\xi\}$ by \cite[Thm.~2I]{Tuk94}.  Then $H$ has a finite index subgroup that acts elliptically on $S$, so $H$ also acts elliptically.
It suffices to assume that $\{\zeta,\xi\}$ is disjoint from the limit sets of all edge groups.
By Proposition~\ref{Prop: separation}, any parabolic edge group of $S$ has limit set a cut point of $M$. Similarly by Corollary~\ref{cor:SplittingExact}, any $2$--ended nonparabolic edge group $G_e$ has limit set an exact cut pair.
In particular, the limit set of $H$ is not separated by the separations of $M$ corresponding to any edge group of $S$.
By Proposition~\ref{prop:PieceInVertexSpace} the pair $\{\zeta,\xi\}$ is contained in $\Lambda G_v$ for some vertex $v$ of $S$.  This vertex $v$ must be fixed by $H$, so $H$ acts elliptically on $S$.

We now need to show that $T$ dominates every other universally elliptic tree $S$.
In other words, each vertex group $H$ of $T$ acts elliptically on every universally elliptic tree $S$.
If $H$ is parabolic, then it acts elliptically on the $(\mathbb{E},\mathbb{P})$--tree $S$ by definition.  If $H$ is $2$--ended and stabilizes an inseparable exact cut pair, then it acts elliptically on $S$ by the argument of the previous paragraph.
If $H$ is quadratically hanging with finite fiber, then $H$ acts elliptically on every universally elliptic tree by a result of Guirardel--Levitt on the existence of incompatible splittings of quadratically hanging groups \cite[Thm.~5.27]{GL_JSJ}.

By Proposition \ref{prop: vertex types} the only remaining case is a vertex group $H$ that is rigid.
For each rigid group $H$, by definition there exists a set $A \subset \relbndry$ stabilized by $H$ that is not separated by any cut point or exact cut pair.  Proposition~\ref{prop:PieceInVertexSpace} gives that such a set $A$ is contained in $\Lambda G_v$ for some vertex $v$ of $S$, and thus that $H$ fixes the vertex $v$ of $S$.  So $H$ acts elliptically on $S$.
\end{proof}

\begin{proof}[Proof of Theorem~\ref{thm: main thm}]
Trees that are equivalent in the sense of Section~\ref{sec:JSJ} have the same tree of cylinders by Theorem~\ref{thm:TreeOfCylinders}.
To show that the exact cut pair/cut point tree $T$ is equal to the JSJ tree of cylinders of \cite{GL_TreesCyl}, it suffices to show that it is equal to its own tree of cylinders.
By definition, $T$ is a bipartite graph with vertex set $Z \sqcup \Pi$.  Each vertex of $Z$ has an elementary stabilizer, and each vertex of $\Pi$ has a stabilizer that is either quadratically hanging with finite fiber or rigid.
Each vertex $v \in Z$ has the property that all edges adjacent to it are equivalent, since their stabilizers are subgroups of the elementary group $\Stab(v)$.
On the other hand, each vertex of $\Pi$ has the property that none of its adjacent edges are co-elementary.
Indeed, distinct neighbors correspond to distinct members of $Z$, the set of all cut points and exact inseparable cut pairs of the boundary.  If $z\ne z' \in Z$ the group generated by $\Stab(z) \cup \Stab(z')$ has a limit set that contains $z \cup z'$.  Thus this limit set either contains at least three distinct points or contains two distinct parabolic points. Such a subgroup cannot be elementary, since in any convergence group action, the limit set of an elementary subgroup contains at most two points, at most one of which can be parabolic (see \cite[\S 2]{Tuk94}).
It follows that $T$ is equal to its own tree of cylinders.
\end{proof}

\bibliographystyle{alpha}
\bibliography{mybib}

\end{document}